\newcommand\blfootnote[1]{%
  \begingroup
  \renewcommand\thefootnote{}\footnote{#1}%
  \addtocounter{footnote}{-1}%
  \endgroup
}
\newtheorem{definition}{Definition}[section]
\newtheorem{lemma}[definition]{Lemma}
\newtheorem{theorem}[definition]{Theorem}
\newtheorem{corollary}[definition]{Corollary}
\newtheorem{example}[definition]{Example}
\newtheorem{remark}[definition]{Remark}
\newtheorem{conjecture}[definition]{Conjecture}
\DeclareMathOperator\supp{supp}
\newcommand{\sgn}{\operatorname{sgn}}
\author{Florian Stebegg}
\title[Model-Independent Pricing of Asian Options]{Model-Independent Pricing of Asian Options
via Optimal Martingale Transport}
\begin{document}

\begin{abstract}
In this article we discuss the problem of calculating optimal model-independent (robust) bounds for the price of
Asian options with discrete and continuous averaging. We will give geometric
characterisations of the maximising and the minimising pricing model for certain
types of Asian options in discrete and continuous time. In discrete time the problem is reduced to
finding the optimal martingale transport for the cost function $|x+y|$.
In the continuous time case we consider the cases with one and two given marginals. 
We describe the maximising models in both of these cases 
as well as the minimising model in the one-marginal case and relate the 
two-marginals case to the discrete time problem with two marginals.
\end{abstract}

\maketitle


\section{Introduction}
\blfootnote{We gratefully acknowledge financial support from FWF under grant P26736.

This is a version of a thesis submitted by the author in partial fulfillment of the requirements for the
degree of Master of Science at the University of Vienna.}

Determining the value of a path-dependent option (as for example Asian options)
is an important and well-developped topic in mathematical finance. The traditional
approach going back to Samuelson, Merton, Black and Scholes consists of making
a good guess at (a model of) the law of the underlying stock price which is consistent with the
information that is available on the market. If the law of the process is kept simple enough
i.e. analytically tractable, then one can calculate the fair (and arbitrage-free) price
of the option as its expected (modulo discounting with the risk-free rate) payoff
with respect to this probability law. There is a wide range of such models that have
been examined in the past and are also extensively used in the financial industry to 
calculate prices and make Value-at-Risk analyses of financial products.

A problem with this approach is the choice of the ``correct'' model. The individual
choice of the model used for pricing and risk analysis is not completely arbitrary, but
still based on strong assumptions. This lack of knowledge about the true law of the
process leads to the notion of model risk, which is generally ignored in and therefore not captured by
the traditional approach. Having no knowledge about the true process, it is almost
impossible to put your finger on the range of deviations from your proposed model.

One of the early approaches to this issue is by Hobson \cite{Ho98}.
He proposed a way to tackle this problem by instead of determining a single price
based on a single model, striving to determine the whole range of possible prices attainable by
having some (consistent) law for the process. In the aftermath of \cite{Ho98} a lot
of material was published which discussed the problem for various classes of assets.
The lookback options were discussed in \cite{Ho98}, barrier options in 
\cite{BrHoRo01a,BrHoRo01b}, basket options in \cite{dAsElG06,HoLaWa05a,HoLaWa05b} and also
volatility swaps in \cite{CaLe10,Ka11,HoKl12}. These papers generally exploit various 
solutions of the Skorokhod embedding problem which happen to have certain beneficial
properties. A summary of the approach can be found in a set of lecture notes by Hobson \cite{Ho11}.
An overview of the various useful Skorokhod embeddings is given in Obloj's survey
\cite{Ob04}.

In a more recent series of papers, a new approach to obtain the model-independent
price range has been developed using results originally devised for the optimal
transport problem as introduced by Monge \cite{Mo81} and Kantorovich \cite{Ka04,Ka58}.
Modifications of the classical duality 
results have been established by Galichon, Henry-Labordère and Touzi \cite{GaHLTo14},
Beiglböck, Henry-Labordère and Penkner \cite{BeHLPe13},
Acciaio, Beiglböck, Penkner and Schachermayer \cite{AcBePe13},
Bouchard and Nutz \cite{BoNu13} as well as Dolinsky and Soner \cite{DoSo12}. 
The method has been
applied specifically to forward start straddles in \cite{HoNe12,HoKl13b} and a general
class of exotic options depending on the stock price at two fixed points in time in
\cite{BeJu12,HLTo13}.

This article is mostly concerned with new applications of the techniques of this latter family of
papers to the case of Asian-style options. Pricing of Asian options is a
hard problem in the traditional approaches as well and can only be done 
numerically for general market settings. We will be able to give geometric
characterisations of the maximising and the minimising pricing model for certain
types of Asian options in discrete and continuous time.

First, we will establish our notation and provide a short sketch of the principal problem
of model-independent finance in the following section. Afterwards we will discuss Asian options and their various
forms and give a short overview of previous results concerning their value. Then
we will discuss Asian options which monitor the stock price at a finite number
of times and give a characterization of the models which yield the extreme values
 for the options which take an average of only two values.
Finally we will also give an overview of how these discrete time results relate
to Asian options which are defined through a continuous average over the stock price.

\section{Principal Problem of Model-Independent Finance}

We want to consider a framework in which
we can consider a space of (portfolios of) liquidly traded assets $V$. This
is just a vector space spanned by a set of random variables (which represent
the payoffs of the various assets). This framework is inspired by the one
developped by Hobson in \cite{Ho11} which can be seen as an extension of the framework introduced in
Föllmer and Schied \cite{FoSc04} and used by Cox and Oblój in \cite{CoOb11a,CoOb11b}.

To be a little more specific, we consider a 
filtered measurable space $(\Omega, \mathcal{F},(\mathbb{F}_t)_{t \in [0,T]})$ 
where it will always be sufficient to set $\Omega := C[0,T]$\footnote{In Section 5 we will
consider paths with a finite number of discontinuities as well, but demonstrate how they can be
approximated by continuous paths such that the above space is indeed sufficient.}. 
The filtration shall be the minimal right continuous filtration such that the coordinate process
$X_t(\omega) := \omega(t)$ is adapted to it.
We will interpret the coordinate process $X_t$ as the
forward (i.e.\ discounted) price of some underlying asset. While the extension of this
framework to multiple stocks is rather straightforward, we will omit it and restrict
ourselves to a single-stock setting.
 
We can treat $V$ as a linear subspace of the space of $\mathbb{F}_T$-measur-able
random variables. (The exotic pathwise options are then any $\mathbb{F}_T$-measurable
random variables not in $V$.) Depending on the market setting we want to work in, we can
assume $V$ to contain various assets. One very prominent and important type of
asset in $V$ will be vanilla call options with some strike $K \in \mathbb{R}$
($C_K := (X_T - K)_+$) which account for an integral part of many markets.
Furthermore it will generally be the case that $V$ contains the space of 
self-financing trading strategies $S \subseteq V$.

\begin{definition}[Self-Financing Trading Strategy]
If $(H_t)_{t \in [0,T]}$ is a process which is adapted to $\mathbb{F}$ and whose paths are of bounded variation,
then we call the random
variable given by 
$(H \cdot X)_T := \int_0^TH_t\, dX_t$\footnote{This integral is in the sense of 
Lebesgue-Stieltjes and thus \emph{pathwise}. The question of which classes
of processes and integrands allow for pathwise stochastic integration is a recurrent
problem in Stochastic Calculus. It has already been discussed by Bichteler \cite{Bi81}
and later on by Karandikar \cite{Ka95}. A very recent notable paper on the subject is 
Nutz \cite{Nu12}. In this thesis we can restrict ourselves to processes
of bounded (total) variation which ensures the existence of a pathwise integral
in the Riemann-Stieltjes integral with respect to a process with continuous paths due to elementary
calculus results. We will need this again in Section 5.} a \emph{self-financing trading
strategy}.
$S := \{(H\cdot X)_T : H \text{ adapted to } \mathbb{F}\}$.
\end{definition}

This just means that we can always trade in the underlying using some (regular enough)
 adapted strategy.
Note that we always work with forward prices here. Therefore we consider the risk-free
rate to be $0$, so we can borrow money to buy stocks without having any interest to
incorporate.
 
Now we can and will define a pricing functional $\mathcal{P}$ on $V$. These are the prices of the
liquidly traded assets
as they are a priori determined by the market. As all the assets in $V$ are liquidly traded
(or linear combinations of liquidly traded assets) on the market, they will have a well
determined price at a certain time $0$ (the present). As the market should be free from
arbitrage we can assume the following conditions for $\mathcal{P}$:
 
\begin{enumerate}[(P1)]
\item $\mathcal{P}(A + \lambda B) = \mathcal{P}(A) + \lambda \mathcal{P}(B)$ for $A,B \in V, \lambda \in \mathbb{R}$.
\item $A \leq B \Rightarrow \mathcal{P}(A) \leq \mathcal{P}(B)$ for $A,B \in V$.
\item $\mathcal{P}(A) = 0$ for $A \in S$.
\end{enumerate}
 
The condition (P1) (linearity) relates to the ``Law of one Price''  in finance. Similarly
(P2) is basically the ``No-Arbitrage'' condition (no gain without risk).
The last condition (P3) also follows naturally from basic considerations in financial
markets, as we can clearly obtain the payoff of a self-financing
trading strategy without needing any initial endowment, so its price should be zero.
 
The problem we now want to attack in this framework, is to put a price tag on
an additional asset $A \notin V$  which is $\mathbb{F}_T$ measurable.
We consider the space $V_A := V + (\lambda A)_{\lambda \in \mathbb{R}}$ and we want to
extend $\mathcal{P}$ to a functional $\mathcal{P}_A : V_A \to \mathbb{R}$ such that the
conditions (P1)-(P3) continue to hold. Obviously this amounts to choosing
$p_A := \mathcal{P}_A(A)$ so that we do not violate monotonicity.
 
In model-independent finance we strive to find not only one possible price, but all prices
consistent with these conditions. The set of possible prices is easily seen to be convex which 
means it is an interval, so we want to find the boundaries of this interval. This problem is a 
linear optimisation problem but varies strongly through the choice of the initial market assets 
$V$ and the asset $A$ whose price range we want to find.

\subsection{Bounding Prices by Sub-/Super-Hedging}

A necessary condition to avoid arbitrage in a price is to keep the price below
any price of a portfolio which superreplicates the derivative. The analogous lower
bound is then given by the price of a sub-replicating portfolio.

Expressed in the above framework we want to find $A_l,A_u \in V$ such that
$A_l \leq A \leq A_u$. This gives rise to the bounds 
$\mathcal{P}(A_l) \leq p_A \leq \mathcal{P}(A_u)$.

Generally we will assume that we have a generating system for $V$\footnote{In
the sense that elements of $V$ are finite linear combinations of the generating system.}. Then this system
represents the assets whose prices are given because they are actually traded in the market and
$V$ consists of the possible portfolios i.e.\ linear combinations of these assets.

\begin{definition}[Vanilla Securities]
Let $\{F_{\iota} : \iota \in I\} \subseteq V$ be a given family of random variables
such that $\{F_{\iota} : \iota \in I\}$ is a generating system of $V$, then we call this
set the set of \emph{vanilla securities}.
\end{definition}

\begin{definition}[Semi-Static Portfolio]
We call the elements in $V$ \emph{semi-static portfolios}. If we are then given an
asset $A \notin V$
semi-static portfolios $A_l := \sum_{\iota \in I} c^l_{\iota}F_{\iota}$ and
$A_u := \sum_{\iota \in I} c^u_{\iota}F_{\iota}$ with only finitely many $c^u_{\iota}$ and
$c^l_{\iota}$ being different from zero such that $A_l \leq A \leq A_u$ holds, then we
call $A_l$ a \emph{semi-static subheding portfolio} and $A_u$ a \emph{semi-static superhedging
portfolio} of $A$.
\end{definition}

\begin{remark}[Arbitrage Opportunities]
If we would
set $p_A$ such that for some $A_u \in V$ with $A \leq A_u$ we have $p_A > \mathcal{P}(A_u)$
then this would give an immediate arbitrage possibility by going short in $A$ and long in $A_u$.
\end{remark}

To avoid such a situation, we need to take the minimum over all $\mathcal{P}(A_u)$ such that 
$A \leq A_u$. This is the optimisation problem of model-independent finance. The situation for 
the lower bound is analogous.

\begin{remark}
Note that we did not formulate the inequalities for the sub- and superhedging portfolios
in an almost surely fashion because we do not prescribe an underlying model. 
\end{remark}

\subsection{Bounding Prices by Models}

Another way of calculating the range of possible prices is by optimising the prices obtained
from the traditional method.
The traditional method for finding a price, described at the beginning, is to prescribe some consistent law.

\begin{definition}[Consistent Martingale Measure]
A measure $\mathbb{Q}$ such that for any vanilla security $F_{\iota}$ we have 
$\mathcal{P}(F_{\iota}) = E_{\mathbb{Q}}[F_{\iota}]$ is called a \emph{consistent martingale measure}.
We will denote the set of consistent martingale measures by $\mathcal{M}(\mathcal{P})$.
\end{definition}

Given some consistent martingale measure $\mathbb{Q}$ one can then calculate the price of the 
additional asset $A$ as $E_{\mathbb{Q}}[A]$.

\begin{remark}
As already mentioned we want to ignore discounting here. So one can either consider the
risk-free rate to be zero or consider the given payoffs of the vanilla securities and the
additional (exotic) asset $A$ to be already discounted the way they are given. This
did not play a role in the above valuation method through hedging because discounting
is always monotone and therefore does not change whether a semi-static portfolio is a
superhedge or not.
\end{remark}

A price $p_A$ given in this fashion automatically fulfills the consistency conditions by the properties
of the expected value. As above we can maximise over all possible laws $\mathbb{Q}$ to obtain a possible
upper bound on the interval of consistent prices for $A$. 

Note that there is no a priori reason why there has to exist any law $\mathbb{Q}$ such that 
existing prices are consistent with it in an arbitrary market setting $V$ (the fact that prices
are expected values under some law of the price process is already an assumption). Just as well, 
there is no a priori reason why there cannot be an arbitrage free price for the asset 
which cannot be achieved by a law that gives rise to the market prices.

If we want to know that the optimal bound from this optimisation problem really is the optimal
upper bound for the interval we want to find, we need to prove strong duality for the two 
optimisations we just lined out. This means we need to check that
\[\sup_{\mathbb{Q} \in \mathcal{M}(\mathcal{P})} E_{\mathbb{Q}}[A] = \inf_{A\leq A_u \in V} \mathcal{P}(A_u) \text{.}\]

This duality has been established in various market settings $(V,\mathcal{P})$ for instance by \cite{BeHLPe13}
and \cite{GaHLTo14}. Note that for any $\mathbb{Q}\in\mathcal{M}(\mathcal{P})$ and
$A_u \in V$ with $A \leq A_u$ we obviously have $E_{\mathbb{Q}}[A] \leq E_{\mathbb{Q}}[A_u] = \mathcal{P}(A_u)$.
Hence the inequality 
$\sup_{\mathbb{Q} \in \mathcal{M}(\mathcal{P})} E_{\mathbb{Q}}[A] \leq \inf_{A\leq A_u \in V} \mathcal{P}(A_u)$ is
trivial.
We will get back to the duality result of \cite{BeHLPe13} in section 4.

\subsection{Market Restrictions}

To get a better idea on the set of measures we will usually find in $\mathcal{M}(\mathcal{P})$ we want
to discuss a few common restrictions here.

\begin{lemma}
Consider a framework as above and consider some measure $\mathbb{Q}$ on the
measurable space $(\Omega,\mathcal{F})$ such that the functional 
$\mathcal{P}(A) := E_{\mathbb{Q}}[A]$ on $V$ fulfills (P1)-(P3). Then the coordinate
process $(X_t)_t$ is a martingale with respect to $\mathbb{Q}$.
\end{lemma}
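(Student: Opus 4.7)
The plan is to read off the martingale property directly from condition (P3) by feeding well-chosen simple strategies into the pathwise integral. Adaptedness of $(X_t)$ is built into the setup (the filtration is the one generated by the coordinate process), so only integrability of $X_t$ and the conditional-expectation identity $E_{\mathbb{Q}}[X_t \mid \mathcal{F}_s] = X_s$ for $0 \le s \le t \le T$ remain to be shown.

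For integrability, I would take the deterministic strategy $H_r := \mathbf{1}_{(0,t]}(r)$. This is trivially $\mathbb{F}$-adapted and of bounded variation, so $(H\cdot X)_T \in S$. Since $\mathcal{P}$ is defined on all of $V \supseteq S$ and equals $E_{\mathbb{Q}}[\cdot]$, each element of $S$ must already be $\mathbb{Q}$-integrable. Because $X$ has continuous paths, the pathwise Lebesgue--Stieltjes integral evaluates to $(H\cdot X)_T = X_t - X_0$, which therefore lies in $L^1(\mathbb{Q})$. Since $X_0$ is constant (it is $\mathcal{F}_0$-measurable on $C[0,T]$ with the natural filtration), this yields $X_t \in L^1(\mathbb{Q})$, and (P3) additionally gives $E_{\mathbb{Q}}[X_t] = X_0$.

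For the martingale identity, fix $0 \le s < t \le T$ and $B \in \mathcal{F}_s$, and consider
\[
H_r(\omega) := \mathbf{1}_B(\omega)\,\mathbf{1}_{(s,t]}(r).
\]
This process is $\mathbb{F}$-adapted because $\mathbf{1}_B$ is $\mathcal{F}_s$-measurable and the time-indicator switches on only after time $s$; its paths have total variation at most $2$, hence are of bounded variation. The pathwise Riemann--Stieltjes integral against the continuous path $X(\omega)$ gives $(H\cdot X)_T = \mathbf{1}_B(X_t - X_s)$, an element of $S$. Applying (P3) and the assumption $\mathcal{P}(A) = E_{\mathbb{Q}}[A]$ yields
\[
E_{\mathbb{Q}}[\mathbf{1}_B(X_t - X_s)] = 0.
\]
Since $B \in \mathcal{F}_s$ was arbitrary, this is exactly $E_{\mathbb{Q}}[X_t \mid \mathcal{F}_s] = X_s$.

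The argument is essentially one of bookkeeping; the real content sits in the definition of $S$ and property (P3). The only point requiring minor care is verifying that the chosen indicator-type strategies genuinely satisfy the adaptedness and bounded-variation requirements imposed on $S$, and that the pathwise Lebesgue--Stieltjes integral of a step integrand against a continuous integrator collapses to the expected telescoping difference. Both are elementary, so no deeper obstacle is anticipated.
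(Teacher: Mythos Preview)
Your proposal is correct and follows the same idea as the paper: apply (P3) to a simple step strategy of the form $H_r = (\text{something }\mathcal{F}_s\text{-measurable})\,\mathbf{1}_{(s,t]}(r)$ and read off the martingale property from $E_{\mathbb{Q}}[(H\cdot X)_T]=0$. The paper tests against $h(X_{t_1})$ for $h\in C_b(\mathbb{R})$ rather than indicators $\mathbf{1}_B$ with $B\in\mathcal{F}_s$, and it omits the integrability check you spell out, but these are cosmetic differences; the mechanism is identical.
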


\begin{proof}
We know from the discussion so far that $V$ contains all
self-financing strategies, and they have $0$ cost by (P3).
Under this condition, consider some $0\leq t_1 \leq t_2 \leq T$ and
some arbitrary function $h \in C_b(\mathbb{R})$ and observe that
\begin{align*}
E_{\mathbb{Q}}[h(X_{t_1})(X_{t_2}-X_{t_1})] 
	&= E_{\mathbb{Q}}\left[\int_0^Th(X_{t_1})\mathds{1}_{t_1 \leq t \leq t_2}dX_t\right] \\
	&= \mathcal{P}\left(\int_0^Th(X_{t_1})\mathds{1}_{t_1 \leq t \leq t_2}dX_t\right) = 0 \text{.}
\end{align*}
\end{proof}
 
Now suppose that prices of vanilla calls for some maturity $0 < t \leq T$ and all possible strikes 
$K \in \mathbb{R}$ are known. It has been observed by Breeden and Litzenberger \cite{BrLi78} that 
this determines the marginal at time $t$, i.e.\ the distribution of $X_t$ (the value of $X$ at the specific time $t$, not
the whole process) with respect to $\mathbb{Q}$.

This can be motivated by considering the function $C(K) := E_{\mathbb{Q}}[(X_t-K)_+] = \mathcal{P}((X_t-K)_+) 
= \int (x-k)_+ d\mu_(x)$ where $\mu = \mathrm{Law}_{\mathbb{Q}}(X_t)$.
This means that we assume that the market values European call options with maturity $T$ according
to some law $\mathbb{Q}$ and we know the price it obtains that way.
The first derivative (in the distribution sense) of this function is then given by $C'(K) = E_{\mu}[-\mathds{1}_{K < X_t}]
=-(1-\mu[X_t \leq K])$. This allows us to recover the distribution function of $\mu \sim X_t{}_\#\mathbb{Q}$ from the
call prices.

\section{A Short Overview of Asian Option Pricing}

Eventually we strive to establish (or rather characterise) sharp model-independent price bounds for 
Asian options.

An Asian option is a bet on the average value of an option over a fixed time frame between
initiation and expiration of the option.
By considering the average instead of the price of the stock at some specific point in
time, one obtains an asset which has less volatility compared to the stock itself. 
This usually makes Asian options cheaper than regular European-style 
options\footnote{In Section 5 we will actually show that Asian options can be superhedged with a
portfolio consisting of a European option with the same maturity and a self-financing
trading strategy.}.

Depending on the precise form of the contract, these options occur in various 
different forms.
The main distinguishing feature is the granularity and type of the averaging. We will only
be interested in the classical arithmetic mean for a continuously or discretely monitored 
stock price
\[A^c := \frac{1}{T}\int_0^TX_t \, dt\text{, } \qquad A^d := \frac{1}{N}\sum_{i=1}^N X_{t_i}.\]
Here we have $0 \leq t_1 < \dots < t_N \leq T$. Alternatively one could consider
geometric averaging or averaging of the logarithmic values.

On these averages the usual bet comes in the form of a ``hockey-stick''-function. So 
one can consider Asian puts or calls as $(A^{c,d} - K)_+$ and $(K-A^{c,d})_+$ respectively.
If we can trade in a continuum of various strikes, we have the possibility to synthesize
arbitrary convex functions of the average, so a general Asian option will have the payoff
$\phi(A^{c,d})$ for some convex function $\phi$.

The pricing of Asian options is especially interesting as a topic for research, as there is not even a closed form
solution for the price if we assume a standard Black-Scholes model. 
See for instance the survey article by Boyle and Potapchik \cite{BoPo08} for traditional approaches to pricing
Asian options.

Model independent bounds for Asian options of various types have also been an active topic of discussion. An
early work on this topic is an article by Dhaene et al.\ \cite{DhDeGo02}. More recently Albrecher et al.\ \cite{AlMaSc08}
proposed a model independent lower bound.
Improvements under certain strong conditions to the market have been discussed by Deelstra et al.\ in \cite{DeRaVa14}.
We can follow their approach for finding a lower bound in our framework very easily by exploiting Jensen's inequality 
for convex functions to show for any joint law of the price process that the following holds:
\begin{align*}
E\left[\phi\left(\frac{1}{N}\sum_{i=1}^N X_{t_i}\right)\right] &= E\left[E\left[\left.\phi\left(\frac{1}{N}\sum_{i=1}^N X_{t_i}\right)\right|\mathbb{F}_{t_1}\right]\right] \\
&\geq E\left[\phi\left(\frac{1}{N}\sum_{i=1}^N E[X_{t_i}|\mathbb{F}_{t_1}]\right)\right] = E[\phi(X_{t_1})] \text{.}
\end{align*}
So we find a lower bound for the price of an Asian option by the price of a European option with maturity
$t_1$ which is the first sample point of the averaging. (Provided we know this price, which is assumed in \cite{DeRaVa14}.)

These bounds are not shown to be sharp in \cite{AlMaSc08} or \cite{DeRaVa14}. This means we only know that 
$\inf_{\mathbb{Q} \in \mathcal{M}(\mathcal{P})} E_{\mathbb{Q}}[A] \geq E[B]$ where $A$ is the 
Asian option and $B$ the European option with maturity $t_1$. It can be shown that the bound is sharp, if
we only know the marginals at time $t_1$. This can be achieved in similarly to the methods we use in Section 5,
but we will not pursue this further here. In \cite{DeRaVa14} it is assumed though that European
options with maturities $t_1, \dots,t_n$ are traded with well-known prices. In this case the following simple example illustrates
that in general the bound is not sharp.

\begin{example}
Consider a forward price with marginals $\mu_{0} := \delta_1$, $\mu_{t_1} := \delta_1$ and $\mu_{t_2}:=\frac{1}{2}(\delta_0+\delta_2)$ as well as the Asian option with discrete averaging $(\frac{1}{2}\sum_{i=1}^2 X_{t_i}-1)_+$.
The lower price bound obtained in this way is then obviously $0$, while the only possible true forward
price obtainable by choosing a consistent martingale measure is $\frac{1}{4}$.
\end{example}

\section{Bounds on Discrete Time Options}

\label{secdis}
We will now try to improve on this bound in our setting by finding the actual model $\mathbb{Q}$
which minimises the functional above.
Eventually we have to restrict ourselves to an Asian option which is written 
on the average of the stock price at two points of time $0 < t_1 < t_2$.
So we want to
give a price for a derivative with payoff $\phi\left(\frac{X_{t_1}+X_{t_2}}{2}\right)$ where $\phi$ is a convex
function. We will want this price to be optimal in the above given sense, using only the information 
about the call prices for vanilla
options with maturity $t_1$ and $t_2$ or equivalently the laws of $X_{t_1}$ and $X_{t_2}$.

The mathematical objects we need to consider will be given in the following
\begin{definition}
Consider two measures $\mu$ and $\nu$. The set of \emph{martingale couplings}  or \emph{martingale transports} 
for these measures is given by
\begin{align*} 
\mathcal{M}(\mu,\nu) &:= \{\rho \in P(\mathbb{R}^2) : \mathrm{proj}^x\#\rho = \mu,\mathrm{proj}^y\#\rho = \nu,\\
&\qquad \rho \text{ is a martingale measure}\} \text{.}
\end{align*}
where $P(\mathbb{R}^2)$ denotes the probability measures on $\mathbb{R}^2$.

For the maximal and minimal cost achievable by \emph{transporting}
$\mu$ to $\nu$ along a \emph{martingale} we set
\begin{align*}
\bar{D} &:= \sup_{\pi \in \mathcal{M}(\mu,\nu)} \int |x+y| \, d\pi(x,y) \text{;}\\
\underaccent{\bar}{D} &:= \inf_{\pi \in \mathcal{M}(\mu,\nu)} \int |x+y| \, d\pi(x,y) \text{.}
\end{align*}
\end{definition}
Using this definition we will now formulate the following theorem to characterise the optimising model.
\begin{theorem}
\label{mainthm}
Consider two measures $\mu$ and $\nu$ in convex order such that $\mu$ is
continuous and $\supp(\mu) \subseteq \mathbb{R}^{+}$. Furthermore we require $\int |y|^3 \, d\nu < \infty$.

We can find $\bar{\pi},\underaccent{\bar}{\pi} \in \mathcal{M}(\mu,\nu)$ such that
\[\bar{D} = \int |x+y| \, d\bar{\pi}(x,y)  \qquad \underaccent{\bar}{D} = \int |x+y| \, d\underaccent{\bar}{\pi}(x,y) \]
holds. 

Furthermore $\bar{\pi}$ can be chosen such that it is concentrated on the graphs of two real Borel measurable
functions $\bar{T}_u:\mathbb{R}^+ \to \mathbb{R}^+$ and $\bar{T}_l: \mathbb{R}^+ \to \mathbb{R}$. $\bar{T}_u$ is non-increasing.

Similarly $\underaccent{\bar}{\pi}$ can be chosen such that is is concentrated on the graphs of two real
functions $\underaccent{\bar}{T}_u:\mathbb{R}^+ \to \mathbb{R}^+$ and $\underaccent{\bar}{T}_l: \mathbb{R}^+ \to \mathbb{R}$ and the secondary diagonal $-\mathrm{Id}$.
$\underaccent{\bar}{T}_u$ is non-decreasing.
\end{theorem}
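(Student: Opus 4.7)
The plan is to follow the monotonicity-principle framework of martingale optimal transport due to Beiglb\"ock--Juillet and Beiglb\"ock--Henry-Labord\`ere--Penkner, specialised to the piecewise linear cost $c(x,y)=|x+y|$. The first step I would take is a decomposition that strips $c$ of its trivial affine part: since
\[
|x+y| \;=\; (x+y) + 2(-x-y)_+ \text{,}
\]
and $\int(x+y)\,d\pi = 2\int x\,d\mu$ is constant over $\pi \in \mathcal{M}(\mu,\nu)$ (both marginals share a common mean by the convex order hypothesis), the two extremal problems are equivalent to extremising $\pi \mapsto \int(-x-y)_+\, d\pi$. This isolates the only geometric content of $c$, namely its kink along the secondary diagonal $\{y=-x\}$.

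Next I would establish existence. By Strassen's theorem $\mathcal{M}(\mu,\nu)$ is non-empty, and it is weakly compact since its marginals are prescribed probability measures. The linear growth of $c$, together with the uniform integrability provided by the finite first moments of $\mu$ and $\nu$ (the hypothesis $\int|y|^3\,d\nu<\infty$ provides far more), makes $\pi\mapsto\int|x+y|\,d\pi$ continuous in the weak topology on $\mathcal{M}(\mu,\nu)$, so both optimisers $\bar\pi$ and $\underaccent{\bar}{\pi}$ exist.

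For the geometric structure I would invoke the martingale version of $c$-cyclical monotonicity: any optimiser is concentrated on a Borel set $\Gamma$ on which no finite ``martingale swap'' --- replacing $\pi$ restricted to $\{x_1,\ldots,x_n\}\times\mathbb{R}$ by a competitor with the same fibrewise barycentres and the same marginal --- can improve the cost. Because the reduced cost $(-x-y)_+$ is affine in $y$ on each of the half-planes $\{y>-x\}$ and $\{y<-x\}$, a fibrewise swap calculation shows that for $\mu$-a.e.\ $x$ the disintegration of an optimiser is supported on at most two points, possibly together with the single point $\{-x\}$ on the secondary diagonal (on which the cost vanishes). Continuity of $\mu$ rules out atomic pathologies and realises the two points as graphs of Borel measurable maps $T_u\geq T_l$, with $T_u(x)\geq 0$ forced by the martingale condition since $x\geq 0$ compels an upper branch above the mean. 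For $\bar\pi$, any mass on the diagonal would strictly decrease the cost under a mass-preserving split and is hence absent, yielding two graphs only. For $\underaccent{\bar}{\pi}$ diagonal mass is cost-free and permitted, producing the three-component description involving $-\mathrm{Id}$. The monotonicity of $T_u$ (non-increasing for the maximum, non-decreasing for the minimum) then follows from a two-fibre $c$-monotonicity argument comparing pairs $x_1<x_2$ with upper images $y_1,y_2$, where the kinked structure of $c$ forbids either the ``crossing'' or the ``non-crossing'' configuration.

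The principal obstacle I anticipate is the fibrewise structural analysis sketched above: since $c$ is only piecewise affine, the swap inequalities are degenerate on the two open half-planes and all constraints must be extracted from the behaviour at the kink. Pinning down the ``at most two off-diagonal atoms'' structure requires constructing sufficiently many quantitative swap competitors (e.g.\ quadratic put-like test payoffs localised at candidate points in $\supp\nu$), and it is precisely for the admissibility of such perturbations in the monotonicity principle that regularity above the second moment is needed --- $\int|y|^3\,d\nu<\infty$ being the natural such hypothesis. Once the support structure is established, Borel measurability of $T_u$ and $T_l$ follows from a standard measurable selection argument, and the monotonicity properties close the proof.
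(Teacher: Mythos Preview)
Your outline correctly identifies the existence argument and the relevance of the martingale monotonicity principle, but it contains a genuine gap at precisely the point you flag as the ``principal obstacle''. Because $|x+y|$ is affine on each half-plane $\{y>-x\}$ and $\{y<-x\}$, the primary cost is highly degenerate: optimisers are \emph{not} unique, and many of them fail to have the two-graph (resp.\ two-graphs-plus-antidiagonal) structure. The paper makes this explicit --- for instance, when $\supp(\mu),\supp(\nu)\subseteq\mathbb{R}^+$ every $\pi\in\mathcal{M}(\mu,\nu)$ is simultaneously a maximiser and a minimiser. Consequently the ordinary variational lemma applied to $c(x,y)=|x+y|$ alone cannot force the claimed support structure: the finite swap inequalities you would derive are equalities away from the kink and carry no information.

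The paper resolves this by introducing a \emph{secondary} optimisation: among all primary optimisers one selects the one that extremises the tie-breaking cost $c'(x,y)=|x+y|\,y^2$, and then applies an extended variational lemma (Lemma~\ref{modlem}) that produces a monotonicity set on which both the primary inequality and, in case of primary equality, the secondary inequality are respected. The hypothesis $\int|y|^3\,d\nu<\infty$ is there exactly to make $c'$ integrable (not, as you suggest, to legitimise perturbations in the primary monotonicity principle). The structural analysis then proceeds by comparing the explicit piecewise-linear functions $f(t)=(1-\lambda)|t+y^-|+\lambda|t+y^+|-|t+y|$ and $g(t)=(1-\lambda)|t+y^-|(y^-)^2+\lambda|t+y^+|(y^+)^2-|t+y|y^2$ at two fibres, together with the Beiglb\"ock--Juillet accumulation-point lemma, to rule out forbidden configurations. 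Without the secondary cost your sketch has no mechanism to select, among the continuum of primary optimisers, one with the asserted geometry.
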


\begin{corollary}
If we have a setup as above, where the support of $\mu$ is not restricted to the positive
half of $\mathbb{R}$ then there exist maximising and minimising martingale measures in
$\mathcal{M}(\mu,\nu)$ whose support is of the following form (respectively):
\begin{itemize}
\item On the right half-plane of $\mathbb{R}^2$, the support of the optimiser looks as described
in Theorem \ref{mainthm}.
\item On the left half-plane, the support is of the form described in Theorem \ref{mainthm} rotated around
the center of the plane by $180^{\circ}$.
\end{itemize}
\end{corollary}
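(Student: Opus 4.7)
The strategy is to reduce the general case to the positive-support case of Theorem \ref{mainthm} by splitting the optimal coupling along the sign of the first coordinate and exploiting the $(x,y) \mapsto (-x,-y)$ symmetry of the cost $|x+y|$.

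Since $\mu$ is continuous, it has no atom at $0$, so we may write $\mu = \mu_+ + \mu_-$ where $\mu_+ := \mu|_{\mathbb{R}^+}$ and $\mu_- := \mu|_{\mathbb{R}^-}$. First I would invoke Theorem \ref{mainthm} itself to produce \emph{some} optimiser $\pi^* \in \mathcal{M}(\mu,\nu)$ (for either $\bar D$ or $\underaccent{\bar}{D}$); existence follows from a standard weak compactness argument in the same way as in the theorem, since none of the hypotheses used there relied on $\mu$ having support in $\mathbb{R}^+$. Next, decompose $\pi^* = \pi^*_+ + \pi^*_-$ by restricting to $\{x>0\}$ and $\{x<0\}$, and let $\nu^*_\pm$ denote the $y$-marginals of $\pi^*_\pm$, so that $\nu = \nu^*_+ + \nu^*_-$. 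The key observation is that each $\pi^*_\pm$ is again a martingale coupling of its own marginals: for any bounded Borel $h$, the martingale property of $\pi^*$ applied to the bounded function $h(x)\mathds{1}_{\{x>0\}}$ yields $\int h(x)(y-x)\, d\pi^*_+ = 0$, and similarly for $\pi^*_-$. In particular $\mu_\pm$ and $\nu^*_\pm$ are in convex order, and the third-moment condition passes to $\nu^*_\pm$ because $\int|y|^3\,d\nu^*_\pm \leq \int|y|^3\,d\nu < \infty$.

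Now I would apply Theorem \ref{mainthm} on the positive side directly to $(\mu_+, \nu^*_+)$, producing an optimiser $\bar\pi_+ \in \mathcal{M}(\mu_+, \nu^*_+)$ concentrated on the graphs described in the theorem. On the negative side I would use the involution $\tau(x,y) := (-x,-y)$: the pushforwards $\tau_\#\mu_-$ and $\tau_\#\nu^*_-$ satisfy the hypotheses of Theorem \ref{mainthm} (continuity, positive support, convex order, third moment), and $\tau$ maps martingale couplings to martingale couplings because $E[Y|X]=X$ is equivalent to $E[-Y|-X] = -X$. Moreover the cost is invariant: $|(-x)+(-y)| = |x+y|$. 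Hence Theorem \ref{mainthm} gives an optimiser $\tilde\pi_- \in \mathcal{M}(\tau_\#\mu_-, \tau_\#\nu^*_-)$ of the stated form, and pulling back $\bar\pi_- := \tau_\#\tilde\pi_-$ yields an optimiser in $\mathcal{M}(\mu_-,\nu^*_-)$ whose support is precisely the $180^\circ$ rotation of the one in the theorem.

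Finally I would reassemble $\bar\pi := \bar\pi_+ + \bar\pi_-$ and check that this measure lies in $\mathcal{M}(\mu,\nu)$: the marginals clearly add back to $\mu$ and $\nu$, and the martingale property persists by summing the two vanishing integrals $\int h(x)(y-x)\,d\bar\pi_\pm = 0$. By the cost decomposition
\begin{equation*}
\int |x+y|\,d\bar\pi = \int|x+y|\,d\bar\pi_+ + \int|x+y|\,d\bar\pi_- \,\geq\, \int|x+y|\,d\pi^*_+ + \int|x+y|\,d\pi^*_- = \int|x+y|\,d\pi^*,
\end{equation*}
with the reverse inequality for the minimising case, and because $\pi^*$ was already optimal, $\bar\pi$ is optimal too with exactly the support structure claimed. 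The main technical point to be careful about is the disintegration step: one must check that restricting a martingale coupling to $\{x>0\}$ yields a measure that really is a martingale coupling with its induced $y$-marginal, and that the hypotheses of Theorem \ref{mainthm} are inherited by $(\mu_\pm,\nu^*_\pm)$; everything else is essentially bookkeeping and exploiting the $\tau$-symmetry of the cost.
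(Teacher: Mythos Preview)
The paper states this corollary without proof, evidently regarding it as immediate from Theorem~\ref{mainthm} together with the invariance of the cost $|x+y|$ (and of the secondary cost $|x+y|y^2$) under the reflection $(x,y)\mapsto(-x,-y)$: one takes the global (secondary-)optimiser, keeps its monotonicity set $\Gamma$, and observes that the configurational analysis in Corollaries~\ref{mincor}--\ref{mincorb} (resp.~\ref{maxcor}--\ref{maxcorb}) applies verbatim to $\Gamma\cap\{x>0\}$ and, after reflection, to $\Gamma\cap\{x<0\}$.

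Your argument is correct and rests on the same symmetry, but you package it differently. Instead of re-running the variational analysis on each half-plane, you split an arbitrary optimiser $\pi^*$ along the sign of $x$, record the induced second marginals $\nu^*_\pm$, apply Theorem~\ref{mainthm} as a black box to $(\mu_+,\nu^*_+)$ and, after reflecting, to $(\mu_-,\nu^*_-)$, and then glue the two structured optimisers back together. This modular reduction has the virtue of invoking Theorem~\ref{mainthm} exactly as stated, without revisiting its proof; the paper's implicit route instead works with a single global optimiser throughout, so the resulting $\nu$-split is intrinsic rather than inherited from an auxiliary choice of $\pi^*$. Both routes prove the existence claim in the corollary.

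Two small points worth tightening in your write-up: the restrictions $\mu_\pm,\nu^*_\pm$ are sub-probability measures, so a normalisation (and the trivial case $\mu_\pm=0$) should be handled before invoking Theorem~\ref{mainthm}; and the first sentence is slightly misphrased, since Theorem~\ref{mainthm} itself cannot be invoked to produce $\pi^*$ when $\supp(\mu)\not\subseteq\mathbb{R}^+$ --- but you immediately correct this by pointing to the weak-compactness argument, which is indeed all that is needed for existence.
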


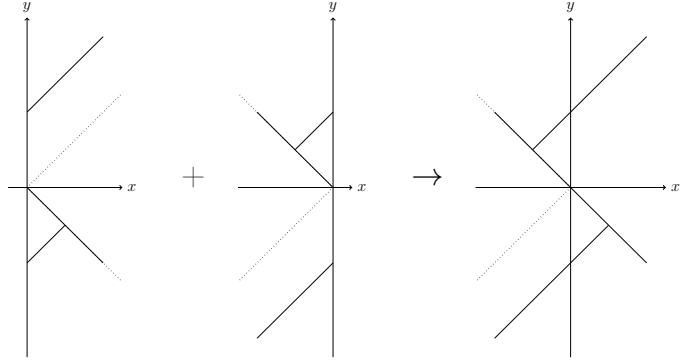
\begin{figure}
\begin{center}
\begin{tabular}{ccccc}
\parbox[c]{2cm}{
\scalebox{0.5}{
\begin{tikzpicture}
\draw[->] (-0.5,0) -- (2.5,0) node[right] {$x$};
\draw[->] (0,-4.5) -- (0,4.5) node[above] {$y$};
\draw[dotted] (0,0) -- (2.5,2.5);
\draw[dotted] (0,0) -- (2.5,-2.5);
\draw[domain=0:2,variable=\x] plot ({\x},{-\x});
\draw[domain=0:2,variable=\x] plot ({\x},{\x+2});
\draw[domain=0:1,variable=\x] plot ({\x},{\x-2});
\end{tikzpicture}}} & + &
\parbox[c]{2cm}{
\scalebox{0.5}{
\begin{tikzpicture}
\draw[->] (-2.5,0) -- (0.5,0) node[right] {$x$};
\draw[->] (0,-4.5) -- (0,4.5) node[above] {$y$};
\draw[dotted] (0,0) -- (-2.5,2.5);
\draw[dotted] (0,0) -- (-2.5,-2.5);
\draw (0,2) -- (-1,1);
\draw (-2,-4) -- (0,-2);
\draw (0,0) -- (-2,2);
\end{tikzpicture}}}
& $\rightarrow$ &
\parbox[c]{3cm}{
\scalebox{0.5}{
\begin{tikzpicture}
\draw[->] (-2.5,0) -- (2.5,0) node[right] {$x$};
\draw[->] (0,-4.5) -- (0,4.5) node[above] {$y$};
\draw[dotted] (0,0) -- (-2.5,2.5);
\draw[dotted] (0,0) -- (-2.5,-2.5);
\draw (2,4) -- (-1,1);
\draw (-2,-4) -- (1,-1);
\draw (2,-2) -- (-2,2);
\end{tikzpicture}}}
\end{tabular}
\end{center}
\caption{Concentration sets of minimising transport plans with purely positive and negative $\mu$'s as well as their mixture}
\end{figure}

Before we start to explain the significance of this theorem, we want to point out a few
important details about it.

One might notice that we restrict ourselves here to a single payoff function, namely the
absolute value, instead of all possible convex functions. While it might be favourable to
cover arbitrary convex functions, this seems a little too hard a task at this point. The absolute
value does cover the most important type of convex functions used for options, which is the
``hockey-stick''-function bet. Note that we have
\[\left(\tfrac{x+y}{2} - K\right)_+ = \left(\tfrac{x-K}{2}+\tfrac{y-K}{2}\right)_+ 
= \tfrac{1}{2}\left|\tfrac{x-K}{2}+\tfrac{y-K}{2}\right|+\tfrac{x+y}{4} +\tfrac{K}{2} \text{.}\]
So, as the value of $\int (x+y)/2+K\,d\pi(x,y)$ is constant over all measures $\pi \in \mathcal{M}(\mu,\nu)$,
it is equivalent to the original problem to maximise the total value for
$\int |\tilde{x}+\tilde{y}|\,d\tilde{\pi}(\tilde{x},\tilde{y})$ over $\tilde{\pi} \in \mathcal{M}(\tilde{\mu},\tilde{\nu})$ where we set
$\tilde{\mu} := (2x+K)_\#\mu$ and $\tilde{\nu} := (2x+K)_\#\nu$ respectively.
This transformation makes it necessary to deal with measures $\mu$ and $\nu$ with support
in all of $\mathbb{R}$ even if one is only interested in the financial applications of the theorem
where $\mu$ and $\nu$ are usually only supported on $\mathbb{R}^+$.

The optimisers (maximiser or minimiser) of this optimisation problem are not unique. This
can be easily seen by considering measures $\mu$ and $\nu$ such that 
$\supp(\mu),\supp(\nu)\subseteq \mathbb{R}^+$. In this case any measure in $\mathcal{M}(\mu,\nu)$
will lead to the same value and is therefore a maximiser and a minimiser at the same time. This
problem of course persists in the case of measures which are not restricted to the positive
half-axis.

Furthermore note that this theorem belongs to the theory of optimal martingale transport
which is optimal transport with an additional (linear) constraint, namely that the transport
is a martingale measure. In this theory, the function that we call a payoff function
(here $c(x,y) = |x+y|$) is usually referred to as a cost function. We will use these terms
alternatingly in the context of two-step martingale measures.

We will defer the explanation of this theorem to a later point and proceed first with some
 further specializations of the described setup and
previous results that can be applied to obtain conclusions for Asian options of 
the above type.

First, we consider the class of path-dependent options which only depend on a finite
number of values of the path. So we consider payoffs of the form
\[\Phi((X_t)_{t\in [0,T]}) := \bar{\Phi}(X_{t_1},\dots,X_{t_n}) \]
with $0 \leq t_1 < \dots < t_n \leq T$. (Obviously a discretely monitored Asian option
is of this type).
Without loss of generality, we can even set $t_i := i$ and $T=n$, which we will
clarify later. When $(X_t)_t$ is a martingale in continuous
time with respect to some measure $\mathbb{Q}$, then $X_1, \dots, X_n$ is 
obviously a discrete time martingale with respect to $\mathbb{Q}$. We furthermore assume that
the marginal laws of the $X_i$ are given as discussed before. Furthermore, if we fix a discrete time
martingale $Y_1, \dots, Y_n$, we can always construct a martingale with the correct marginals
by setting $X_t := Y_i$ for $i-1 < t \leq i$. So, if we maximise the above payoff over all discrete time
martingales, we get the same as if we maximise over all continuous time martingales. We now need
that this maximisation indeed gives us the optimal robust bound for this payoff. This is the
aforementioned result about strong duality which has been shown in \cite{BeHLPe13}. 
We will discuss it in detail in the following subsection.

\subsection{Strong Duality in Discrete Time}

The minimal upper bound on the price of a derivate with payoff $\Phi(X_1, \dots, X_n)$
achievable by a consistent law
is the minimal price of a super hedge of that option. For this we need to consider the set of feasible
superhedges.

In the framework described in Section 2, we let the space $V$ be spanned by the self-financing trading
strategies and the european call options, i.e.\ derivatives with payoffs $(X_i-K)_+$ for some $i=1,\dots,n$
and some real number $K$.

This means that the semi-static portfolios are of the form
\[H_{(\phi_i)_i,(h_j)_j} := \sum_{i=1}^n \phi_i(x_i) + \sum_{j=1}^{n-1} h_j(x_1, \dots, x_j)(x_{j+1}-x_j) \]
with $\phi_i \in L^1(\mu_i)$ convex, and $h_j: \mathbb{R}^j \to \mathbb{R}$ bounded measurable. Where $\mu_i$ is the probability
law of $X_i$ which is uniquely determined through the price
functional $\mathcal{P}$ by the condition $E_{\mu_i}[(X_i-K)_+] = \mathcal{P}((X_i-K)_+)$ for $K \in \mathbb{R}$ as outlined before.

For future use we define
\begin{definition}[Superhedging Strategies]
Given some payoff $\Phi: \mathbb{R}^n \to \mathbb{R}$ we call
\[ \mathcal{H}(\Phi) := \{((\phi_i)_{i=1,\dots,n},(h_j)_{j=1,\dots,n-1}) :\Phi(x_1,\dots,x_n) \leq H_{(\phi_i)_i,(h_j)_j}\}\]
the set of feasible \emph{superhedging} strategies.
\end{definition}
The price of any $H_{(\phi_i)_i,(h_j)_j}$ is then given by 
\[ \mathcal{P}(H_{\phi_i,h_j}) = \sum_{i=1}^n \mathcal{P}(\phi_i(X_i)) + \sum_{j=1}^{n-1}\mathcal{P}(h_j\cdot(X_{j+1}-X_j))
= \sum_{i=1}^n \mathcal{P}(\phi_i(X_i))\]
where we used conditions (P1)-(P3) from Section 2.
The optimal robust bound is then formally given by
\[\bar{P} := \inf\left\{\sum_{i=1}^n E_{\mu_i}[\phi_i] : \exists (h_j)_{j=1,\dots,n-1} \text{ s.t. } H_{(\phi_i)_i,(h_j)_j} \in \mathcal{H}(\Phi)\right\}\text{.}\]

The above problem has the inconvenient drawback that the set of superhedges is not compact and therefore
the infimum is not always a minimum, which has been shown in \cite{BeHLPe13}. 
This can be avoided by considering the dual problem instead.

We already know from Section 2 that any \emph{consistent law} $\mathbb{Q}$ has to be chosen such that 
the process $X_i$ is a martingale i.e.\ $E_{\mathbb{Q}}[X_i|X_1,\dots,X_{i-1}]$ $= X_{i-1}$ for $2 \leq i \leq n$.
Furthermore, the distribution of $X_i$ with respect to $\mathbb{Q}$ has to be $\mu_i$ as described above.
Clearly only the distribution of $(X_1,\dots,X_n)$ is of interest for the problem, so we will work with measures
on $\mathbb{R}^n$ instead of the original (abstract) $\Omega$.
Let $\mathcal{M}(\mu_1,\dots,\mu_n)$ denote the set of measures on $\mathbb{R}^n$ which have
marginals $\mu_i$ and are marginal laws. $\mathcal{M}(\mu_1,\dots,\mu_n)$ is not empty if and only if the
marginals $\mu_i$ are in \emph{convex order}. This result has already been observed by Strassen in \cite{St65}.
A proof for this fact is also outlined in \cite{BeJu12}.

Now we can consider the dual optimisation problem
\[\bar{D} := \sup\{E_{\mathbb{Q}}[\Phi(X_1,\dots,X_n)] : \mathbb{Q} \in \mathcal{M}(\mu_1,\dots,\mu_n)\} \text{.}\]
For any $H_{(\phi_i)_i,(h_j)_j} \in \mathcal{H}(\Phi)$ and $\mathbb{Q} \in \mathcal{M}(\mu_1,\dots,\mu_n)$ we
obviously have $\mathcal{P}(H_{(\phi_i)_i,(h_j)_j}) = \sum_{i=1}^n E_{\mu_i}[\phi_i(X_i)] = E_{\mathbb{Q}}[H_{(\phi_i)_i,(h_j)_j}]$
due to the martingale property of $\mathbb{Q}$ and because
$\mathcal{P}(\phi_i(X_i)) = E_{\mu_i}[\phi_i(X_i)] = E_{\mathbb{Q}}[\phi_i(X_i)]$. Furthermore $E_{\mathbb{Q}}[H_{(\phi_i)_i,(h_j)_j}] \geq E_{\mathbb{Q}}[\Phi]$.
holds because of the monotonicity of the expected value. This shows that $\bar{P} \geq \bar{D}$ always holds
(if the dual problem is infeasible it holds even trivially). As announced before, strong duality has been
shown in \cite{BeHLPe13} which is summarised in the following theorem:

\begin{theorem}
Assume that $\mu_1, \dots, \mu_n$ are Borel probability measures on $\mathbb{R}$ such that
$\mathcal{M}(\mu_1,\dots,\mu_n)$ is non-empty. Let $\Phi : \mathbb{R}^n \to (-\infty,\infty]$
be a lower semi-continuous function such that
\[\Phi(x_1,\dots,x_n) \leq K \cdot(1+|x_1|+\dots + |x_n|) \]
on $\mathbb{R}^n$ for some constant $K$. Then there is no duality gap, i.e. $\bar{P}=\bar{D}$. Moreover, the dual
value $\bar{D}$ is attained, i.e. there exists a martingale measure $\mathbb{Q} \in \mathcal{M}(\mu_1,\dots,\mu_n)$
such that $\bar{D} = E_{\mathbb{Q}}[\Phi]$.
\end{theorem}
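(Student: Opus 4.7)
The plan is to prove $\bar{P} = \bar{D}$ by establishing the non-trivial direction $\bar{P} \leq \bar{D}$ via a two-stage argument: first a minimax swap for bounded continuous $\Phi$, then an approximation from below to handle the general lower semi-continuous case. Weak duality $\bar{D} \leq \bar{P}$ is already established in the text.

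For bounded continuous $\Phi$, I would use the classical multi-marginal Kantorovich duality as an intermediate tool. Denote by $\mathcal{Q}(\mu_1,\ldots,\mu_n)$ the set of couplings with marginals $\mu_i$ (dropping the martingale condition), and by $\mathcal{H}_m$ the linear space of martingale-increment functions $h(x) = \sum_j h_j(x_1,\ldots,x_j)(x_{j+1}-x_j)$ with each $h_j$ bounded continuous. For each fixed $h \in \mathcal{H}_m$, classical Kantorovich duality for the payoff $\Phi - h$ gives
$$\inf\Bigl\{\sum_{i=1}^n E_{\mu_i}[\phi_i] : \sum_{i=1}^n \phi_i(x_i) \geq \Phi(x) - h(x)\Bigr\} = \sup_{\mathbb{Q} \in \mathcal{Q}(\mu_1,\ldots,\mu_n)} E_{\mathbb{Q}}[\Phi-h].$$
Taking infimum over $h$ on both sides rewrites the primal as $\bar{P}(\Phi) = \inf_{h \in \mathcal{H}_m} \sup_{\mathbb{Q}} E_{\mathbb{Q}}[\Phi - h]$. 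Since $\mathcal{Q}(\mu_1,\ldots,\mu_n)$ is convex and weakly compact (Prokhorov, as each $\mu_i$ is tight) and the functional $(h,\mathbb{Q}) \mapsto E_{\mathbb{Q}}[\Phi - h]$ is bilinear with $\mathbb{Q}$-continuous under the weak topology, Sion's minimax theorem swaps the order. The inner infimum $\inf_{h \in \mathcal{H}_m} -E_{\mathbb{Q}}[h]$ then equals zero if $E_{\mathbb{Q}}[h] = 0$ for every $h \in \mathcal{H}_m$, i.e.\ if $\mathbb{Q}$ is a martingale measure, and $-\infty$ otherwise. Hence the swapped expression reduces to $\sup_{\mathbb{Q} \in \mathcal{M}(\mu_1,\ldots,\mu_n)} E_{\mathbb{Q}}[\Phi] = \bar{D}(\Phi)$.

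For the lower semi-continuous case with linear growth, I would approximate $\Phi$ from below by a monotone sequence $\Phi_m \uparrow \Phi$ of bounded continuous functions. The linear-growth bound together with the finiteness of first moments of the $\mu_i$ (implied by the non-emptiness of $\mathcal{M}(\mu_1,\ldots,\mu_n)$ via convex order) provides a uniform integrable majorant, so $\bar{D}(\Phi_m) \to \bar{D}(\Phi)$ by dominated convergence combined with the weak compactness of $\mathcal{M}(\mu_1,\ldots,\mu_n)$. The primal side uses $\bar{P}(\Phi_m) \leq \bar{P}(\Phi)$ by monotonicity together with the equality $\bar{P}(\Phi_m) = \bar{D}(\Phi_m)$ from the first step. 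Attainment in $\bar{D}$ then follows by extracting a weakly convergent maximising sequence in $\mathcal{M}(\mu_1,\ldots,\mu_n)$, using that the martingale constraints are closed under weak limits tested against bounded continuous functions, and invoking upper semi-continuity of $\mathbb{Q} \mapsto E_{\mathbb{Q}}[\Phi]$ from the linear-growth bound and fixed marginals.

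The main obstacle is the minimax swap. Sion's theorem demands compactness and semi-continuity on at least one of the two variables, and $\mathcal{H}_m$ is an infinite-dimensional non-compact space; one must rely entirely on the weak compactness of $\mathcal{Q}(\mu_1,\ldots,\mu_n)$ and on the linearity in $h$ to make the swap work. A secondary delicate point is the passage from bounded continuous $\Phi$ to the linear-growth lower semi-continuous setting: the dynamic strategies $h_j$ in the approximating superhedges need not stay bounded along the sequence, so the uniform-integrability argument from the marginal moment bounds must be executed carefully to keep the limiting expectations finite and to justify interchanging limits with the sup and inf.
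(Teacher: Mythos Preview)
The paper does not give its own proof of this theorem: it is explicitly cited as a result from \cite{BeHLPe13} (Beiglb\"ock, Henry-Labord\`ere and Penkner), with only the remark that attainment of $\bar{D}$ follows from weak compactness of $\mathcal{M}(\mu_1,\dots,\mu_n)$. So there is nothing in the paper to compare your argument against line by line.

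That said, your sketch is essentially the proof strategy of \cite{BeHLPe13}: reduce to bounded continuous $\Phi$, invoke classical multi-marginal Kantorovich duality to absorb the $\phi_i$'s, then swap $\inf_h$ and $\sup_{\mathbb{Q}}$ via a minimax theorem over the weakly compact set of couplings so that the inner infimum over $h$ enforces the martingale constraint, and finally pass to the lower semi-continuous linear-growth case by monotone approximation together with uniform integrability from the fixed first-moment marginals. Your identification of the two delicate points (the minimax hypotheses and the control of the approximating hedges) matches the places where \cite{BeHLPe13} does actual work. So your proposal is correct in outline and aligned with the cited source; just be aware that in the paper under review this is treated as a black box, not reproved.
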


The last assertion is due to weak compactness of $\mathcal{M}(\mu_1,\dots,\mu_n)$. 

\subsection{Geometric Description of the Dual Optimiser}

The dual problem from the last section
is similar to results obtained for the classical optimal transport problem on $\mathbb{R}^n$ as extensively
covered in \cite{Vi03} and \cite{Vi09}. A very common feature in optimal transport is that
optimal transport plans are often concentrated on sets which are geometrically simple. The most famous of
these results is Brenier's theorem which describes the geometry of optimal transport plans for the quadratic
cost function. It asserts that these transport plans are so called monotone transport plans. This means
that they are given by the gradient of a convex function on $\mathbb{R}^n$. For one dimensional
measures which we will consider from now on in the martingale case, this amounts to saying that the transport plan
is concentrated on the graph of a non-decreasing function.

This inspires to look for geometric properties of optimal martingale couplings (i.e. transport plans). Due to the
similarity of the problem,
we will call the elements of $\mathcal{M}(\mu,\nu)$ (with $\mu$ and $\nu$ in convex order) as defined above \emph{martingale transport plans}.  This problem
has been considered by various authors in the case of options depending on only two points in time 
$\Phi((X_t)_{t\in [0,2]}) := c(X_1,X_2)$.
Hobson and Neuberger \cite{HoNe12} and Hobson and Klimmek \cite{HoKl12} describe the geometry of optimisers for
the forward start straddle $c(x,y) := |x-y|$. Beiglböck and Juillet \cite{BeJu12} and Henry-Labordère and Touzi
\cite{HLTo13} consider maximisers and minimisers of options with a payoff of the form $c(x,y)$ with 
$c_{xyy} > 0$ and $c_{xyy} < 0$ respectively. 
A martingale transport plan cannot be concentrated on a single
graph like in the non-martingale problem unless the two measures agree, in which case the identical transport is the only feasible martingale transport plan. In the cases previously considered, 
they are usually concentrated on two or three graphs which seems to
be the natural analogon in the martingale case.

A widely used tool to derive the geometry of a two dimensional optimal transport plan in the classical optimal transport
problem is the notion of cyclical monotonicity which we will not discuss further here. There is a similar notion
derived in \cite{BeJu12}, which can be used to desribe optimal martingale transport plans. It is called a
\emph{variational lemma}.

\begin{lemma}[Variational Lemma]
\label{varlem}
Let $\mu$ and $\nu$ be probability measures in convex order and $c:\mathbb{R}^2 \to \mathbb{R}$ a Borel
measurable cost function which satisfies $c(x,y) \geq a(x) + b(y)$ for integrable (with respect to $\mu$ and $\nu$,
respectively) functions $a$ and $b$. Assume that $\pi \in \mathcal{M}(\mu,\nu)$ is a
minimising martingale transport plan which leads to finite costs. Then there exists a Borel set $\Gamma$ with
$\pi(\Gamma) = 1$ such that the following holds:

If $\alpha$ is a measure on $\mathbb{R}^2$ with finite support such that $\supp{\alpha} \subseteq \Gamma$ then
we have $\int c \,d\alpha \leq \int c\, d\alpha'$ for every measure $\alpha'$ such that
\begin{enumerate}[(i)]
\item $\alpha'$ has the same marginals as $\alpha$, and
\item $\int y \, d\alpha_x(y) = \int y\, d\alpha'_x(y)$ for $(\mathrm{proj}^x\#\alpha)$-a.e. $x$
\end{enumerate}
where $(\alpha_x)_{x\in \mathbb{R}}$ denotes the disintegration of $\alpha$ with respect to
$\mathrm{proj}^x\#\alpha$.
\end{lemma}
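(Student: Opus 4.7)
The plan is to argue by contradiction: assume that no Borel set $\Gamma$ of full $\pi$-measure satisfies the stated swap inequality, and use this to construct a martingale transport plan strictly cheaper than $\pi$, contradicting its minimality. The bridge from ``every candidate $\Gamma$ fails'' to ``$\pi$ itself is not optimal'' is a measurable selection combined with an aggregation of local swaps.

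First I would parameterize the candidate improvements. For each $n$, let $\mathcal{S}_n$ be the space of pairs $(\alpha,\alpha')$ of atomic probability measures supported on at most $n$ points of $\mathbb{R}^2$ that share both marginals, share the conditional $y$-mean given $x$, and satisfy the strict inequality $\int c\,d\alpha > \int c\,d\alpha'$. This $\mathcal{S}_n$ is a Borel subset of a finite-dimensional Polish parameter space once the number of atoms is fixed. If the conclusion of the lemma failed, then for some $n$ the set $B_n\subseteq\mathbb{R}^2$ of points that occur in the support of the $\alpha$-component of some element of $\mathcal{S}_n$ would have positive $\pi$-measure; otherwise $\Gamma := \mathbb{R}^2 \setminus \bigcup_n B_n$ would be the required full-measure set. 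By the Jankov--von Neumann measurable selection theorem applied to the projection $\mathcal{S}_n \to (\mathbb{R}^2)^n$, one obtains a universally measurable map assigning to each $n$-tuple $((x_1,y_1),\ldots,(x_n,y_n))$ in a set of positive $\pi^{\otimes n}$-measure an improving swap $(\alpha,\alpha')$ whose $\alpha$ is supported on that tuple.

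Next I would aggregate these pointwise swaps into a single perturbation of $\pi$. Integrating the assignment $((x_1,y_1),\ldots,(x_n,y_n)) \mapsto \alpha' - \alpha$ against a small sub-probability restriction of $\pi^{\otimes n}$ yields a signed measure $\sigma$ on $\mathbb{R}^2$ which by construction has zero $x$- and $y$-marginal, vanishing conditional $y$-mean given $x$, and $\int c\,d\sigma < 0$. After scaling so that $\pi + \sigma \geq 0$, the measure $\pi + \sigma$ is a member of $\mathcal{M}(\mu,\nu)$ with cost strictly less than $\int c\,d\pi$, the desired contradiction. The lower bound $c(x,y) \geq a(x) + b(y)$ with $a,b$ integrable is used throughout to keep all cost integrals well-defined and to justify the dominated-convergence steps.

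The main obstacle is positivity preservation, $\pi + \sigma \geq 0$: an atomic swap $\alpha$ is not dominated by $\pi$ when $\pi$ is diffuse, so one cannot literally subtract $\alpha$ from $\pi$. The resolution is to refuse to work with any single configuration of atoms and instead realize the swap on a small Borel neighborhood of $\pi^{\otimes n}$-mass where the swap remains improving fiber-by-fiber, replacing the Dirac atoms of $\alpha$ by restrictions of $\pi$ to appropriate small cells. Simultaneously maintaining the strict cost decrease, the universal measurability of the selector, preservation of both marginals, and the conditional-mean constraint is the technical heart of the argument; everything else is bookkeeping.
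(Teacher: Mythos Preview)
The paper does not give its own proof of this lemma; it is quoted from Beiglb\"ock--Juillet \cite{BeJu12}, and the generalisation in Lemma~\ref{modlem} is likewise attributed to \cite{BeJu12} and \cite{BeGr14} without argument. There is therefore nothing in the paper itself to compare your attempt against directly.

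That said, your overall strategy---assume the finitistic optimality condition fails on every full-measure set, measurably select local improving swaps, and aggregate them into a competitor in $\mathcal{M}(\mu,\nu)$ with strictly smaller cost---is indeed the route taken in the cited references, and the tools you name (Jankov--von~Neumann selection, integration against $\pi^{\otimes n}$, the lower bound $c(x,y)\ge a(x)+b(y)$ to keep all cost integrals finite) are the correct ones.

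There is, however, a real gap in the chain. From ``some $B_n$ has positive $\pi$-measure'' you pass directly to ``the set of $n$-tuples supporting an improvable $\alpha$ has positive $\pi^{\otimes n}$-measure.'' This does not follow: $B_n$ is merely a coordinate projection of that set of tuples, and a $\pi^{\otimes n}$-null subset of $(\mathbb{R}^2)^n$ can easily have a projection of full $\pi$-measure. What the negation of the lemma actually hands you is that improvable configurations cannot be avoided by deleting any $\pi$-null set, which is a different kind of largeness and has to be exploited differently---typically by building the competitor out of positive-$\pi$-mass cells from the outset rather than first selecting atomic configurations and only afterwards trying to thicken them. That construction is exactly what you defer to your final paragraph; you are right that it is the technical heart, but it is not a wrapper around the earlier steps---it replaces them. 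As written, your steps one through four do not survive once the positivity issue is confronted.
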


Actually we need for the proof of Theorem \ref{mainthm}
a generalisation of this lemma, which we will formulate in the following. It follows by a slight variation
of the proof for Lemma \ref{varlem} given in \cite{BeJu12}. Also it follows directly from the results
in \cite{BeGr14}.

\begin{lemma}
\label{modlem}
Let $\mu$ and $\nu$ be probability measures in convex order and $c,c':\mathbb{R}^2 \to \mathbb{R}$ Borel
measurable cost functions such that $c$ satisfies the condition in Lemma \ref{varlem} and
$c'$ satisfies the analogous condition that $c'(x,y) \leq f(x)+g(y)$ for integrable functions $f$ and $g$.
Assume that $\pi \in \mathcal{M}(\mu,\nu)$ is a
minimising martingale transport plan which leads to finite costs and maximises
$c'$ among all minimising martingale transport plans where $\int c' d\pi > -\infty$ holds. Then there exists a Borel set $\Gamma$ 
(the \emph{monotonicity set} of $\pi$) with
$\pi(\Gamma) = 1$ such that the following holds:

If $\alpha$ is a measure on $\mathbb{R}^2$ with finite support such that $\supp{\alpha} \subseteq \Gamma$ then
we have $\int c \,d\alpha \leq \int c\, d\alpha'$ and
$\int c \,d\alpha = \int c\, d\alpha'$ implies $\int c' \,d\alpha \geq \int c'\, d\alpha'$ for 
every measure $\alpha'$ as in Lemma \ref{varlem}. 
\end{lemma}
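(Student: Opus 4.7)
The strategy is to adapt the proof of Lemma \ref{varlem} given in \cite{BeJu12}, where the concentration set is produced by a measurable exhaustion argument showing that the set of points admitting a finite $c$-improving swap (a pair $(\alpha,\alpha')$ satisfying (i)--(ii) with $\int c\, d\alpha' < \int c\, d\alpha$) is $\pi$-null. Here the notion of \emph{improving} swap must be widened to a lexicographic version: $(\alpha,\alpha')$ is improving if either (a) $\int c\, d\alpha' < \int c\, d\alpha$, or (b) $\int c\, d\alpha' = \int c\, d\alpha$ and $\int c'\, d\alpha' > \int c'\, d\alpha$. This matches the hypothesis that $\pi$ first minimises $c$ and then, among $c$-minimisers, maximises $c'$.

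First I would apply Lemma \ref{varlem} directly to obtain a Borel set $\Gamma_1$ with $\pi(\Gamma_1)=1$ outside of which no type-(a) swap can be supported. Working inside $\Gamma_1$, I would define $B\subseteq \Gamma_1$ as the set of points admitting a type-(b) swap. The same measurable selection machinery that underlies \cite{BeJu12} (and that is carried out in greater generality in \cite{BeGr14}) applies: the collection of finitely supported test measures $\alpha'$ realising a type-(b) improvement over a given $\alpha$ with $\supp(\alpha)\subseteq \Gamma_1$ is Borel in the relevant Polish space of signed measures, so $B$ is Borel and one can measurably select witnessing swaps along it.

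The perturbation $\pi'$ obtained by gluing these selected swaps against $\pi\vert_B$ remains in $\mathcal{M}(\mu,\nu)$: marginals are preserved by (i), while condition (ii) fixes the $x$-conditional first moment and hence the martingale property. By construction, $\int c\, d\pi' = \int c\, d\pi$, and if $\pi(B)>0$ then $\int c'\, d\pi' > \int c'\, d\pi$, which contradicts the $c'$-maximality of $\pi$ among $c$-minimisers. Hence $\pi(B)=0$, and $\Gamma := \Gamma_1\setminus B$ has full $\pi$-measure and the claimed lexicographic minimality property.

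The principal technical obstacle is the measurable selection step itself, in particular verifying that the multifunction assigning to each $x\in\Gamma_1$ its set of type-(b) swaps admits a Borel selector whose output can be integrated against a probability kernel so that $\pi'$ is bona fide and has finite $c$- and $c'$-integrals (this is where the integrability bounds $c\geq a+b$ and $c'\leq f+g$ enter, guaranteeing that swap modifications stay within the integrability regime). These points are handled by the general framework of \cite{BeJu12} and \cite{BeGr14}, so no genuinely new analytical input is needed beyond the lexicographic reformulation above.
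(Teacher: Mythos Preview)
Your proposal is correct and matches the paper's approach: the paper does not give a detailed proof but simply asserts that the lemma follows by a slight variation of the proof of Lemma~\ref{varlem} in \cite{BeJu12}, or directly from the results in \cite{BeGr14}. Your lexicographic adaptation of the improving-swap/measurable-selection argument is precisely that variation, so you have filled in what the paper leaves implicit.
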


The intuitive principle is that if a transport plan was not optimal, it could be improved by 
\emph{finite} rearrangements of mass. The conditions on $c$ and $c'$ make sure that the
target values are finite.

In addition to the important Lemma \ref{varlem} we will need another auxiliary lemma which is 
also taken from \cite{BeJu12}.

\begin{lemma}
\label{comblem}
Let $k$ be a positive integer and $\Gamma \subseteq \mathbb{R}^2$. Assume also that there are
uncountably many $a \in \mathbb{R}$ satisfying $|\Gamma_a| \geq k$ where
$\Gamma_a := \{b : (a,b) \in \Gamma\}$.

There exist $a$ and $b_1 < \dots < b_k \in \Gamma_a$ such that for every $\varepsilon > 0$ one may find $a' > a$
and $b_1' < \dots < b_k' \in \Gamma_{a'}$ with
\[ \max(|a-a'|,|b_1-b_1'|, \dots, |b_k-b_k'|) < \varepsilon \text{.}\]
Moreover one may also find $a''<a$ and $b_1'' < \dots < b_k'' \in \Gamma_{a''}$ with
\[ \max(|a-a''|,|b_1-b_1''|, \dots, |b_k-b_k''|) < \varepsilon \text{.}\]
\end{lemma}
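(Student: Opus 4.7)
My plan is to reduce the statement to the existence of a two-sided ``continuity point'' of a selection of $k$-tuples. Set $A := \{a \in \mathbb{R} : |\Gamma_a| \geq k\}$, which is uncountable by hypothesis, and for each $a \in A$ pick (by the axiom of choice) a strictly increasing selection $F(a) := (b_1(a), \ldots, b_k(a)) \in \Gamma_a^k$. Call $a \in A$ \emph{right-bad} if there exists $\varepsilon > 0$ such that every $a' \in A \cap (a, a+\varepsilon)$ satisfies $\|F(a') - F(a)\|_\infty \geq \varepsilon$, and \emph{left-bad} symmetrically. If $a \in A$ is neither right- nor left-bad, then setting $b_j := b_j(a)$ and, for a given $\varepsilon > 0$, $b_j' := b_j(a')$ from any witness $a' \in A \cap (a,a+\varepsilon)$ with $\|F(a')-F(a)\|_\infty<\varepsilon$ immediately yields $\max(|a-a'|,|b_1-b_1'|,\ldots,|b_k-b_k'|)<\varepsilon$, and the $b_j'$ lie in $\Gamma_{a'}$ in strictly increasing order by the construction of $F$; the symmetric choice handles $a''<a$. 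So it suffices to show the bad sets are countable.

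For the right-bad set $B$, observe that if $\varepsilon$ witnesses $a \in B$ then so does any smaller $\varepsilon'>0$, hence $B = \bigcup_{n \in \mathbb{N}} B_{1/n}$, where $B_\varepsilon$ consists of the points witnessed by $\varepsilon$. Cover $\mathbb{R}^k$ by countably many half-open $\ell^\infty$-cubes of side length strictly less than $\varepsilon$. For two distinct points $a, a' \in F^{-1}(C) \cap B_\varepsilon$ (with $C$ one such cube, and say $a<a'$), we automatically have $\|F(a') - F(a)\|_\infty < \varepsilon$ because $F(a), F(a') \in C$; combined with $a' \in A$ and $a \in B_\varepsilon$, this rules out $a' \in (a,a+\varepsilon)$, i.e.\ forces $a' - a \geq \varepsilon$. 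Hence $F^{-1}(C) \cap B_\varepsilon$ projects to an $\varepsilon$-separated subset of $\mathbb{R}$, which is countable, and summing over the countably many cubes shows $B_\varepsilon$, and therefore $B$, is countable. The left-bad set is countable by the symmetric argument, so $A \setminus (\text{right-bad} \cup \text{left-bad})$ is uncountable and in particular nonempty; any point in it yields the required $(a, b_1, \ldots, b_k)$.

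The argument uses no measurability or regularity of $\Gamma$, and the only nontrivial observation is the cube/spacing discretisation at a common scale~$\varepsilon$; the rest is bookkeeping. I do not anticipate a serious obstacle --- the real content is the realisation that the uncountable cardinality of the vertical fibres forces most fibre-selection values to be bilateral density points in the natural sense, and the simultaneous $\varepsilon$-discretisation of first and remaining coordinates captures this cleanly.
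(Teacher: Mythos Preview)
The paper does not supply its own proof of this lemma; it merely quotes the statement from \cite{BeJu12}. Your argument is correct and self-contained: fixing an arbitrary $k$-tuple selection $F$ on $A$, partitioning the bad set by witness scale $1/n$, and then discretising $\mathbb{R}^k$ into sub-$\varepsilon$ cubes to force $\varepsilon$-separation of $F^{-1}(C)\cap B_\varepsilon$ in the first coordinate is a clean way to see that the one-sided bad sets are countable. The only point worth flagging is that you are proving a slightly stronger statement than the lemma asks for (your $b_j'$ come from the \emph{fixed} selection $F(a')$, whereas the lemma would allow any $k$-tuple in $\Gamma_{a'}$), but since the stronger statement holds this is harmless. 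The vacuous case where $A\cap(a,a+\varepsilon)=\emptyset$ is correctly absorbed by your definition of $B_\varepsilon$, and the monotonicity claim ``smaller $\varepsilon'$ also witnesses'' is indeed immediate from $(a,a+\varepsilon')\subseteq(a,a+\varepsilon)$ together with $\varepsilon>\varepsilon'$.
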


\subsection{Minimising Transport Layout}

We are now equipped with the tools we need to finally prove the structure of the optimisers proposed
in Theorem \ref{mainthm}. Due to its complexity we will split the proof in two parts. We will first prove
the structure of the minimiser and proceed with the maximiser in the next subsection.

The general idea of the proof is inspired by the proof of the optimising structure for the cost function $|x-y|$
as it is proposed by \cite{BeJu12}. The optimisers for this cost function are unique though which is not
the case here which adds to the complexity of the proof. To avoid non-uniqueness we propose a
secondary optimisation problem. Among all the minimising transport plans for the cost function
$c(x,y):=|x+y|$ we are looking for the transport plan which maximises the value of
$\int c(x,y)y^2 \, d\pi(x,y)$. Using this secondary optimisation problem one obtains exactly
the proposed form of Theorem \ref{mainthm}.

Note that the condition stated in Theorem \ref{mainthm} that the third moment of $\nu$ is finite corresponds
to the conditions on $c$ and $c'$ in Lemma \ref{modlem}. It is straightforward to check that
$0 \leq |x+y| \leq |x|+|y|$ as well as
$0 \leq |x+y|y^2 \leq |x|y^2 + |y|^3 \leq |x|^3 + 2 |y|^3$ holds. Using that $\mu$ also has finite third moment
because $\mu \preceq \nu$, we have that all the costs appearing here are finite.

We will outsource parts of the proof to one lemma and two easy corollaries.

\begin{lemma}
\label{minlem}
Let $\pi$ be a minimising martingale transport plan which is a maximiser for the
secondary optimisation problem for given marginal distributions $\mu$ and $\nu$ in
convex order and let $\Gamma$ be 
its monotonicity set as given by Lemma \ref{modlem}. 
Let $y^- < y < y^+$ and some $\lambda \in [0,1]$
such that
$y = (1-\lambda)y^- + \lambda y^+$ holds. Consider the functions
\begin{align*}
f(t) &= (1-\lambda)|t+y^-|+\lambda|t+y^+| - |t+y| \text{ and} \\
g(t) &= (1-\lambda)|t+y^-|(y^-)^2 + \lambda|t+y^+|(y^+)^2 - |t+y|y^2 \text{.}
\end{align*}

\begin{enumerate}[(i)]
\item There are no $x,x'$ such that $(x,y^-),(x,y^+),(x',y) \in \Gamma$ and $f(x) > f(x')$.

\item There are no $x,x'$ such that $(x,y^-),(x,y^+),(x',y) \in \Gamma$ and $f(x) = f(x')$ with
$g(x)<g(x')$.
\end{enumerate}
\end{lemma}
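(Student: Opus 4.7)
The plan is to apply the variational Lemma \ref{modlem} directly to a three-point rearrangement. Given the hypothetical configuration $(x,y^-),(x,y^+),(x',y)\in\Gamma$, I will construct a finitely supported $\alpha$ and a competitor $\alpha'$ that preserve the $x$-marginal, the $y$-marginal, and the conditional barycenter of the $y$-variable given $x$, so that $\alpha'$ is admissible in the sense of Lemma \ref{modlem}. The cost change produced by this rearrangement will then exactly isolate $f(x')-f(x)$ for the primary cost $c(t,s)=|t+s|$ and $g(x')-g(x)$ for the secondary cost $c'(t,s)=|t+s|s^2$, and each part of the lemma will follow at once.

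Concretely, the rearrangement to try is
\[
\alpha := (1-\lambda)\delta_{(x,y^-)} + \lambda\delta_{(x,y^+)} + \delta_{(x',y)},
\]
\[
\alpha' := (1-\lambda)\delta_{(x',y^-)} + \lambda\delta_{(x',y^+)} + \delta_{(x,y)},
\]
which swaps the ``spread'' mass sitting over $x$ with the ``concentrated'' mass sitting at $x'$. The $x$- and $y$-marginals of both measures equal $\delta_x+\delta_{x'}$ and $(1-\lambda)\delta_{y^-}+\lambda\delta_{y^+}+\delta_y$ respectively, and the conditional barycenter in $y$ over each of $x,x'$ equals $y$ in both $\alpha$ and $\alpha'$ thanks to $y=(1-\lambda)y^-+\lambda y^+$. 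A quick cancellation then shows
\[
\int c\,d\alpha' - \int c\,d\alpha = f(x') - f(x),\qquad
\int c'\,d\alpha' - \int c'\,d\alpha = g(x') - g(x).
\]
Statement (i) is now immediate from the inequality $\int c\,d\alpha\le\int c\,d\alpha'$ in Lemma \ref{modlem}, and statement (ii) follows from its secondary clause: if $f(x)=f(x')$ then $\int c\,d\alpha = \int c\,d\alpha'$, hence $\int c'\,d\alpha\ge\int c'\,d\alpha'$, which excludes $g(x)<g(x')$.

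No step is really an obstacle; the only point requiring care is that the hypotheses of Lemma \ref{modlem} genuinely apply to the pair $(c,c')$. The lower bound for $c$ is trivial since $|t+s|\ge 0$, while the integrable upper bound $c'(t,s)\le|t|^3+2|s|^3$ was already noted before the lemma's statement; together with $\mu\preceq\nu$ and the third-moment hypothesis on $\nu$, this yields the required integrability. The substantive content of the argument is thus entirely in picking the correct three-point competitor $\alpha'$ so that exactly $f$ and $g$ appear as the drivers of the two cost changes.
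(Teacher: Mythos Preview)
Your proof is correct and is essentially the same argument as the paper's: both construct the three-point competitor $\alpha' = (1-\lambda)\delta_{(x',y^-)}+\lambda\delta_{(x',y^+)}+\delta_{(x,y)}$ to the finitely supported $\alpha$ on $\Gamma$, verify it is admissible for Lemma~\ref{modlem}, and read off the contradictions from $\int c\,d\alpha'-\int c\,d\alpha=f(x')-f(x)$ and $\int c'\,d\alpha'-\int c'\,d\alpha=g(x')-g(x)$. Your weights $(1-\lambda)$ on $y^-$ and $\lambda$ on $y^+$ are in fact the correct ones consistent with $y=(1-\lambda)y^-+\lambda y^+$ and with the definition of $f$; the paper's printed proof interchanges them, which is a typo.
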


\begin{proof}
Obviously it follows from $f(x) > f(x')$ that we have 
\[(1-\lambda)|x+y^-|+\lambda|x+y^+| + |x'+y| > (1-\lambda)|x'+y^-|+\lambda|x'+y^+| + |x+y| \text{.}\]
So if we consider the finite measure $\alpha := \lambda \delta_{(x,y^-)} + (1-\lambda)\delta_{(x,y^+)} + \delta_{(x',y)}$, then $\alpha' :=\lambda \delta_{(x',y^-)} + (1-\lambda)\delta_{(x',y^+)} + \delta_{(x,y)}$ is a competitor of $\alpha$ such that 
$\int c \,d\alpha > \int c \,d\alpha'$. Hence we have a contradiction to Lemma \ref{varlem}.

The same argument works for the second part using $f$ and $g$.
\end{proof}

\begin{corollary}
\label{mincor}
Consider some minimising (and secondary maximising) martingale transport $\pi$ with monotonicity set $\Gamma$ 
for marginals $\mu \preceq \nu$ with $\supp(\mu) \subseteq \mathbb{R}^{+}$ and 
$(x,y^+),(x,y^-),(x',y') \in \Gamma$ such that $y^- < -x < y^+$ and $y^- < y' < y^+$. Then it cannot be
that 
\begin{enumerate}[(i)]
\item $-x < y'$ and $x < x'$ or
\item $y' < -x$ and $x' < x$.
\end{enumerate}
\end{corollary}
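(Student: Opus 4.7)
The plan is to invoke Lemma \ref{minlem}(i) applied to the three points $(x, y^-), (x, y^+), (x', y') \in \Gamma$, taking $y := y'$ and $\lambda := (y' - y^-)/(y^+ - y^-) \in (0,1)$, which is well-defined because $y^- < y' < y^+$. The lemma then forbids $f(x) > f(x')$, where
\[f(t) = (1-\lambda)|t + y^-| + \lambda |t + y^+| - |t + y'|.\]
So the whole task reduces to showing that in each of the two forbidden configurations (i) and (ii) we in fact have $f(x) > f(x')$.

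First I would nail down the shape of $f$. By convexity of $|\cdot|$ and the convex combination $y' = (1-\lambda)y^- + \lambda y^+$, one has $f \geq 0$ on $\mathbb{R}$; moreover on each of the rays $(-\infty, -y^+]$ and $[-y^-, \infty)$ all three absolute values resolve with a common sign, so $f \equiv 0$ there. A direct piecewise linear computation then gives $f(t) = 2\lambda(t + y^+)$ on $[-y^+, -y']$ and $f(t) = -2(1-\lambda)(t + y^-)$ on $[-y', -y^-]$. Thus $f$ is a non-negative ``tent'' function, strictly increasing on $[-y^+, -y']$, strictly decreasing on $[-y', -y^-]$, attaining its maximum $2\lambda(1-\lambda)(y^+ - y^-) > 0$ at $t = -y'$.

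Now I would dispatch case (i). The hypotheses $y^- < -x < y^+$ and $-x < y'$ place $x$ in the strictly decreasing branch $(-y', -y^-)$, so $f(x) > 0$. Because $x' > x$, one of two possibilities occurs: either $x' \in (x, -y^-)$, in which case strict monotonicity gives $f(x') < f(x)$, or $x' \geq -y^-$, in which case $f(x') = 0 < f(x)$. Either way $f(x) > f(x')$, contradicting Lemma \ref{minlem}(i). Case (ii) is entirely symmetric: the hypotheses $y^- < -x < y^+$ and $y' < -x$ now place $x$ on the strictly increasing branch $(-y^+, -y')$, and $x' < x$ forces $f(x') < f(x)$ by the analogous dichotomy ($x'$ either lies further left on the increasing branch, or falls into the region $(-\infty, -y^+]$ where $f$ vanishes).

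The main obstacle is just the careful bookkeeping: computing $f$ on each subinterval, identifying which linear piece $x$ sits on under the respective hypothesis, and then verifying that no matter where on the real line $x'$ lands, the strict inequality $f(x) > f(x')$ survives. Once the tent structure of $f$ is in hand, the case analysis is routine and the sign hypothesis $\supp(\mu) \subseteq \mathbb{R}^+$ plays no essential role here beyond fixing the relative position of $-x$ and $0$.
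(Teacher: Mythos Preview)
Your proof is correct and follows exactly the paper's approach: invoke Lemma~\ref{minlem}(i) and verify $f(x) > f(x')$ in each configuration. The paper's own proof is purely pictorial---it draws two diagrams illustrating the tent shape of $f$ and the positions of $x,x'$ relative to $-y^-,-y',-y^+$---whereas you have actually carried out the piecewise-linear computation and the monotonicity argument explicitly, which is a strict improvement in rigor over the original.
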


\begin{proof}
We draw paradigmatic graphs for the function $f$ from the above Lemma which show that in these cases
it holds that $f(x) > f(x')$ which implies the result by the above Lemma \ref{minlem}:
\begin{center}
\begin{tikzpicture}[every node/.style={draw,circle,fill=black,inner sep=0pt,minimum size=3pt}]
\node[label={[yshift=-20pt]$0$}] at (0,0) {};
\node[label={[yshift=-20pt]$x$}] (x) at (1.25,0) {};
\node[label={[yshift=-24pt]$-x$}] (-x) at (-1.25,0) {};
\node[label={[yshift=-26pt]$-y^-$}] (-y-) at (3,0) {};
\node[label={[yshift=-22pt]$y^-$}] (y-) at (-3,0) {};
\node[label={[yshift=-26pt]$-y^+$}] (-y+) at (-4,0) {};
\node[label={[yshift=-23pt]$y^+$}] (y+) at (4,0) {};
\node[label={[yshift=-21pt]$x'$}] (x') at (2,0) {};
\node[label={[yshift=-6pt]$-y'$}] (-y') at (0.5,0) {};
\node[label={[yshift=-2pt]$y'$}] (y') at (-0.5,0) {};
\node[draw=none,fill=none,minimum size=0pt,inner sep=0pt] (t) at (0.5,1) {};
\draw (-y+) -- (t);
\draw (-y-) -- (t);
\draw (-5,0) -- (5,0);
\end{tikzpicture}
\end{center}

\begin{center}
\begin{tikzpicture}[every node/.style={draw,circle,fill=black,inner sep=0pt,minimum size=3pt}]
\node[label={[yshift=-17pt]$0$}] at (0,0) {};
\node[label={[yshift=-18pt]$x$}] (x) at (1.25,0) {};
\node[label={[yshift=-21pt]$-x$}] (-x) at (-1.25,0) {};
\node[label={[yshift=-23pt]$-y^-$}] (-y-) at (3,0) {};
\node[label={[yshift=-20pt]$y^-$}] (y-) at (-3,0) {};
\node[label={[yshift=-23pt]$-y^+$}] (-y+) at (-4,0) {};
\node[label={[yshift=-20pt]$y^+$}] (y+) at (4,0) {};
\node[label={[yshift=-18pt]$x'$}] (x') at (0.5,0) {};
\node[label={[yshift=-22pt]$-y'$}] (-y') at (2,0) {};
\node[label={[yshift=-19pt]$y'$}] (y') at (-2,0) {};
\node[draw=none,fill=none,minimum size=0pt,inner sep=0pt] (t) at (2,1) {};
\draw (-y+) -- (t);
\draw (-y-) -- (t);
\draw (-5,0) -- (5,0);
\end{tikzpicture}
\end{center}
\end{proof}

\begin{corollary}
\label{mincorb}
Consider some minimising (and secondary maximising) martingale transport $\pi$ with monotonicity set $\Gamma$ for marginals $\mu \preceq \nu$ with $\supp(\mu) \subseteq \mathbb{R}^{+}$. Let $(x,y^+)$, $(x,y^-)$, $(x',y') \in \Gamma$ with $-x' < -x \leq y^- < y^+$. Then
it cannot be that $y^- < y' < y^+$.
\end{corollary}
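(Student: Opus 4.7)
The plan is to apply Lemma \ref{minlem}(ii) directly to the three points $(x, y^-), (x, y^+), (x', y')$ given in the hypothesis, treating $y'$ itself as the intermediate value. Since $y^- < y' < y^+$, I set $\lambda := (y' - y^-)/(y^+ - y^-) \in (0,1)$ so that $y' = (1-\lambda) y^- + \lambda y^+$. This puts us exactly in the setup of Lemma \ref{minlem} with $y = y'$.

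The next step is to evaluate the two auxiliary functions $f$ and $g$ at $t = x$ and $t = x'$. The chain of inequalities $-x' < -x \leq y^- < y' < y^+$ forces every expression of the form $t + y^-,\ t + y',\ t + y^+$ with $t \in \{x, x'\}$ to be nonnegative, so all six absolute values appearing in $f(x), f(x'), g(x), g(x')$ strip off. A short algebraic computation, using only the identity $y' = (1-\lambda) y^- + \lambda y^+$ to kill both the linear term in $t$ and the constant piece, then yields $f(x) = f(x') = 0$.

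Applying the same simplification to $g$ gives $g(t) = A t + B$ on this domain, where
\[ A = (1-\lambda)(y^-)^2 + \lambda(y^+)^2 - (y')^2 = \lambda(1-\lambda)(y^+ - y^-)^2 > 0 \]
by the strict Jensen gap for the convex function $s \mapsto s^2$. Since $x < x'$ (equivalent to the hypothesis $-x' < -x$), this forces $g(x) < g(x')$. Combined with $f(x) = f(x')$, this directly contradicts Lemma \ref{minlem}(ii), completing the proof.

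The only delicate ingredient is the sign bookkeeping that permits stripping all six absolute values at once; this is exactly where the assumption $-x \leq y^-$ is essential, as it places the entire triple in the ``positive'' half-plane $\{t + y \geq 0\}$ where $|t + y| = t + y$. Once the absolute values are gone, the linear term in $f$ vanishes by the convex-combination identity while the strict positivity of the secondary coefficient $A$ keeps $g$ strictly increasing in $t$ — this asymmetry between $f$ (which is insensitive) and $g$ (which discriminates via the strict convexity of $s \mapsto s^2$) is what makes the secondary maximization problem rule out this configuration.
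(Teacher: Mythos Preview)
Your proof is correct and follows the same approach as the paper: both apply Lemma~\ref{minlem}(ii) with $y=y'$ and show $f(x)=f(x')$ together with $g(x)<g(x')$. The only difference is presentational---the paper argues via sketches of the piecewise-linear graphs of $f$ and $g$ (observing that both $x$ and $x'$ lie to the right of the last kink at $-y^-$, where $f$ is identically zero and $g$ has positive slope), whereas you carry out the explicit algebra, stripping the absolute values and computing the slope $A=\lambda(1-\lambda)(y^+-y^-)^2>0$ directly.
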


\begin{proof}
Again we draw paradigmatic graphs for the functions $f$ and $g$ from the above Lemma \ref{minlem}. It can be seen that
clearly $f(x) = f(x')$ and $g(x') > g(x)$ from which follows the corollary.

\begin{center}
\begin{tikzpicture}[every node/.style={draw,circle,fill=black,inner sep=0pt,minimum size=3pt}]
\node[label=below:$0$] at (0,0) {};
\node[label=below:$x'$] (x') at (3,0) {};
\node[label=below:$-x'$] (x') at (-3,0) {};
\node[label=below:$-y^-$] (-y-) at (1.75,0) {};
\node[label=below:$y^-$] (y-) at (-1.75,0) {};
\node[label=below:$-y^+$] (-y+) at (-4,0) {};
\node[label=below:$y^+$] (y+) at (4,0) {};
\node[label=below:$x$] (x) at (2.5,0) {};
\node[label=below:$-x$] (-x) at (-2.5,0) {};
\node[label=below:$-y'$] (-y') at (-1,0) {};
\node[label=below:$y'$] (y') at (1,0) {};
\node[draw=none,fill=none,minimum size=0pt,inner sep=0pt] (t) at (-2.5,1) {};
\draw (-y+) -- (t);
\draw (-y-) -- (t);
\draw (-5,0) -- (5,0);
\end{tikzpicture}
\end{center}

\begin{center}
\begin{tikzpicture}[every node/.style={draw,circle,fill=black,inner sep=0pt,minimum size=3pt}]
\node[label=below:$0$] at (0,0) {};
\node[label=below:$x'$] (x') at (3,0) {};
\node[label=below:$-x'$] (x') at (-3,0) {};
\node[label=below:$-y^-$] (-y-) at (1.75,0) {};
\node[label=below:$y^-$] (y-) at (-1.75,0) {};
\node[label=below:$-y^+$] (-y+) at (-4,0) {};
\node[label=below:$y^+$] (y+) at (4,0) {};
\node[label=below:$x$] (x) at (2.5,0) {};
\node[label=below:$-x$] (-x) at (-2.5,0) {};
\node[label=below:$-y'$] (-y') at (-1,0) {};
\node[label=below:$y'$] (y') at (1,0) {};
\node[draw=none,fill=none,minimum size=0pt,inner sep=0pt] (t1) at (-4,0.2) {};
\node[draw=none,fill=none,minimum size=0pt,inner sep=0pt] (t2) at (-1,0.3) {};
\node[draw=none,fill=none,minimum size=0pt,inner sep=0pt] (t3) at (1.75,0.6) {};
\node[draw=none,fill=none,minimum size=0pt,inner sep=0pt] (e1) at (-5,0.6) {};
\node[draw=none,fill=none,minimum size=0pt,inner sep=0pt] (e2) at (5,1.5) {};
\draw (e1) -- (t1);
\draw (t1) -- (t2);
\draw (t2) -- (t3);
\draw (t3) -- (e2);
\draw (-5,0) -- (5,0);
\end{tikzpicture}
\end{center}
\end{proof}

\begin{proof}[Proof of Theorem \ref{mainthm} (Minimising Part)]
Let $\underaccent{\bar}{\pi}$ be the minimising martingale transport which maximises the
secondary cost function. Let $\Gamma$ be its monotonicity set as defined in Lemma \ref{modlem}.

We first show that the mass of every point $x$ that is distributed to the right of $x$
is concentrated on the graph of an increasing function $T_u$. That means that
$\Gamma \cap \{(x,y) : x \leq y\} \subseteq \mathrm{graph}(T_u) \cup A \times \mathbb{R}$ with
$A$ countable.

Consider elements $(x,y^+),(x,y^-),(x',y') \in \Gamma$ where $y^-< x < y^+$ and $x < x' \leq y'$. Suppose
$y^- < y' < y^+$. If $y^- < -x$ we are in a situation which cannot occur according to Corollary \ref{mincor} part (i).

If we have $y^- \geq -x$ we are in a situation which Corollary \ref{mincorb} tells us is impossible. This shows that the set
on which the mass is concentrated has to be increasing.

Now suppose the mass is concentrated on more than one graph. Then we can find by Lemma \ref{comblem}
$x < x'$ and $x < y^- < y' < y^+$ with $x' < y'$ (with $(x,y^+),(x,y^-),(x',y') \in \Gamma$) which would contradict the monotonicity property of this set and therefore concludes the proof of the first statement.

Now we consider the set $\tilde{\Gamma} = \Gamma \backslash \Delta$ with $\Delta = \{(x,-x) : x \in \mathbb{R}\}$.
The properties of $\Gamma$ stated in Lemma \ref{modlem} are of course inherited by $\tilde{\Gamma}$ and thus the
corollaries above continue to hold.

Suppose we have uncountably many points $x$ such that $|\tilde{\Gamma}_x| \geq 3$. This set
will have an accumulation point (by Lemma \ref{comblem}) $x$ such that we have $y^- < y < x < y^+$ for $(x,y^-),(x,y),(x,y^+) \in \tilde{\Gamma}$. We have to consider three cases:

\begin{figure}
\begin{center}
\resizebox{9cm}{!}{
\begin{tikzpicture}[every node/.style={draw,circle,fill=black,inner sep=0pt,minimum size=3pt}]
\draw (-6,0) -- (6,0);
\draw (-6,-2) -- (6,-2);
\draw (0,0.5) -- (0,-2.5);
\node[label=above:$x$] (x) at (2,0) {};
\node[label=below:$-x$] (-x) at (-2,-2) {};
\node (y-) at (-5,-2) {};
\node[draw=none,fill=none,label=above:$y^-$] at (-5,-2.7){};
\node[label=below:$y^+$] (y+) at (4,-2) {};
\node (y) at (-3,-2) {};
\node[draw=none,fill=none,label=above:$y$] at (-3,-2.7){};
\node (y') at (-4,-2) {};
\node[draw=none,fill=none,label=above:$y'$] at (-4,-2.7){};
\node[label=above:$x'$] (x') at (1.5,0) {};
\draw (x) -- (y-);
\draw (x) -- (y);
\draw (x) -- (y+);
\draw[dashed] (x') -- (y');
\end{tikzpicture}
}
\end{center}

\begin{center}
\resizebox{9cm}{!}{
\begin{tikzpicture}[every node/.style={draw,circle,fill=black,inner sep=0pt,minimum size=3pt}]
\draw (-6,0) -- (6,0);
\draw (-6,-2) -- (6,-2);
\draw (0,0.5) -- (0,-2.5);
\node[label=above:$x$] (x) at (2.5,0) {};
\node[label=below:$-x$] (-x) at (-2.5,-2) {};
\node (y-) at (-4,-2) {};
\node[draw=none,fill=none,label=above:$y^-$] at (-4,-2.7){};
\node[label=below:$y^+$] (y+) at (4,-2) {};
\node (y) at (-1,-2) {};
\node[draw=none,fill=none,label=above:$y$] at (-1,-2.7){};
\node (y') at (-1.75,-2) {};
\node[draw=none,fill=none,label=above:$y'$] at (-1.75,-2.7){};
\node[label=above:$x'$] (x') at (3,0) {};
\draw (x) -- (y-);
\draw (x) -- (y);
\draw (x) -- (y+);
\draw[dashed] (x') -- (y');
\end{tikzpicture}
}
\end{center}

\begin{center}
\resizebox{9cm}{!}{
\begin{tikzpicture}[every node/.style={draw,circle,fill=black,inner sep=0pt,minimum size=3pt}]
\draw (-6,0) -- (6,0);
\draw (-6,-2) -- (6,-2);
\draw (0,0.5) -- (0,-2.5);
\node[label=above:$x$] (x) at (3,0) {};
\node[label=below:$-x$] (-x) at (-3,-2) {};
\node[label=below:$-x'$] (-x) at (-2.25,-2) {};
\node (y-) at (-1.5,-2) {};
\node[draw=none,fill=none,label=above:$y^-$] at (-1.5,-2.7){};
\node[label=below:$y^+$] (y+) at (4,-2) {};
\node (y) at (-1,-2) {};
\node[draw=none,fill=none,label=above:$y$] at (-1,-2.7){};
\node (y') at (-0.5,-2) {};
\node[draw=none,fill=none,label=above:$y'$] at (-0.5,-2.7){};
\node[label=above:$x'$] (x') at (2.25,0) {};
\draw (x) -- (y-);
\draw (x) -- (y);
\draw (x) -- (y+);
\draw[dashed] (x') -- (y');
\end{tikzpicture}
}
\end{center}

\caption{The three cases that would contradict the proposed structure for the left part of the transport plan}
\end{figure}
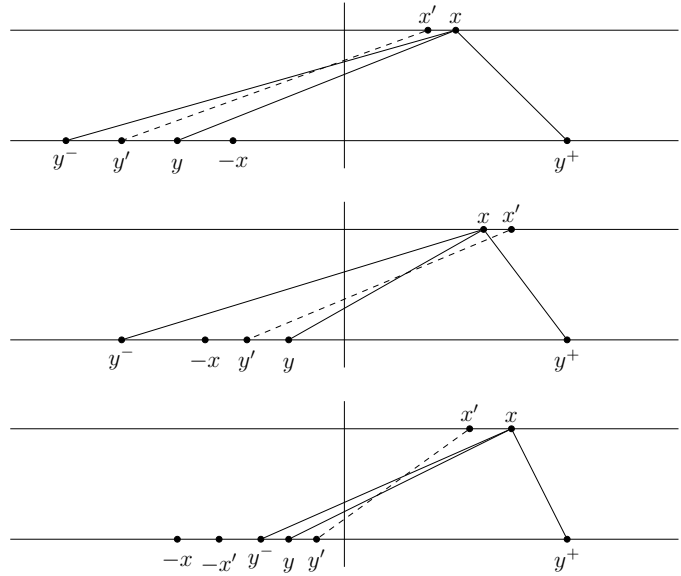

\begin{itemize}
\item $y^- < y < -x$: In this case we can find $x'<x$ and $y^- < y' < -x$ with $(x',y') \in \tilde{\Gamma}$ by Lemma \ref{comblem}. By Corollary \ref{mincor} (ii) this is impossible.
\item $y^- < -x < y$: Another application of Lemma \ref{comblem} gives $x < x'$ and $-x < y' < y^+$ with $(x',y') \in \tilde{\Gamma}$.
This is a contradiction to Corollary \ref{mincor} (i).
\item $-x < y^- < y$: Again by Lemma \ref{comblem} we find $x' < x$ and $y'$ with $(x',y') \in \tilde{\Gamma}$ such that $-x' < y^-$ and $y^- < y' < y^+$. The impossibility of this scheme can be seen by an application of Corollary \ref{mincorb}.
\end{itemize}

This shows that the the mass that is distributed to the left and is not mapped to the secondary diagonal 
is concentrated on a graph.
\end{proof}

Knowledge about the structure can be used to formulate differential equations to calculate
$T_u$ and $T_l$. This is only possible if the density of the marginal measures is analytically
tractable. Therefore there will only be an explicit solution in some special cases. For illustrative
purposes we give the graphs of $T_u$ and $T_d$ for very simple marginal measures in Figure \ref{supfig}.

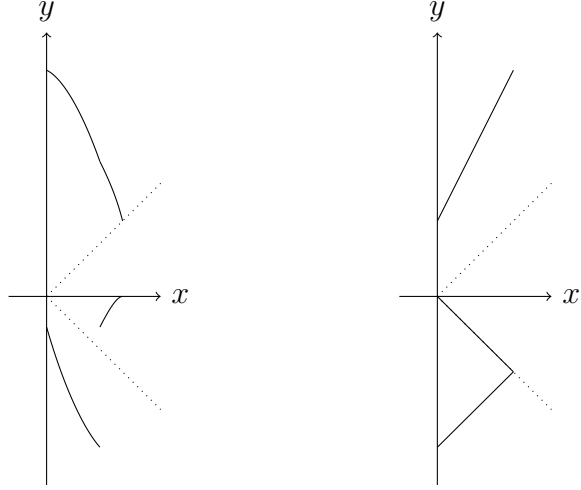
\begin{figure}
\begin{center}
\begin{tikzpicture}
\draw[->] (-0.5,0) -- (1.5,0) node[right] {$x$};
\draw[->] (0,-2.5) -- (0,3.5) node[above] {$y$};
\draw[dotted] (0,0) -- (1.5,1.5);
\draw[dotted] (0,0) -- (1.5,-1.5);
\draw[domain=0:0.7014008587679633,variable=\x] plot ({\x},{1.29684-2*\x+0.5*sqrt(11.603+20.7495*\x-24*\x*\x)});
\draw[domain=0:0.7014008587679633,variable=\x] plot ({\x},{1.29684-2*\x-0.5*sqrt(11.603+20.7495*\x-24*\x*\x)});

\draw[domain=0.7014008587679633:1,variable=\x] plot ({\x},{(15-20*\x+6*\x*\x)/(5-4*\x)});
\draw[domain=0.7014008587679633:1,variable=\x] plot ({\x},{(10*\x*\x-20*\x+10)/(4*\x-5)});
\end{tikzpicture} \qquad \qquad \qquad
\begin{tikzpicture}
\draw[->] (-0.5,0) -- (1.5,0) node[right] {$x$};
\draw[->] (0,-2.5) -- (0,3.5) node[above] {$y$};
\draw[dotted] (0,0) -- (1.5,1.5);
\draw[dotted] (0,0) -- (1.5,-1.5);
\draw (0,-2) -- (1,-1);
\draw (0,0) -- (1,-1);
\draw (0,1) -- (1,3);
\end{tikzpicture}
\end{center}
\caption{Support of the $|x+y|$ maximising and minimising martingale transports for $\mu=\mathds{1}_{[0,1]}\#\lambda$
and $\nu=\frac{1}{4}\mathds{1}_{[-2,0]\cup[1,3]}\#\lambda$ in the $(x,y)$-plane.}
\label{supfig}
\end{figure}

\subsection{Maximising Transport Layout}
\label{secmax}

The proof of the structure of the maximising transport layout is very similar to the
minimiser. We again need a secondary optimisation problem to get rid of non-uniqueness
of the solution. This time we look among all maximisers for the cost function $c(x,y):=|x+y|$
for the one minimising the value of $\int c(x,y)y^2 \,d\pi(x,y)$. We again have a lemma which
we will use in two corollaries to obtain general constellations which are impossible in
the optimal transport plan. We will omit the proofs of the corollaries because they work exactly the
same way as in the minimising case.

Note that a maximiser for $|x+y|$ is a minimiser for $-|x+y|$ so we can continue to use
Lemma \ref{modlem} in the natural way, even though we now have a maximisation problem with
a secondary minimisation problem instead of the other way around.

\begin{lemma}
\label{maxlem}
Let $\pi$ be a maximising martingale transport plan which is minimising the
secondary optimisation problem for given marginal distributions $\mu$ and $\nu$ in
convex order and let $\Gamma$ be its monotonicity set as given by Lemma
\ref{modlem}. Let $y^- < y < y^+ \in \supp(\nu)$ and some $\lambda \in [0,1]$
such that
$y = (1-\lambda)y^- + \lambda y^+$ holds. Consider the functions
\begin{align*}
f(t) &= (1-\lambda)|t+y^-|+\lambda|t+y^+| - |t+y| \qquad \text{and} \\
g(t) &= (1-\lambda)|t+y^-|(y^-)^2 + \lambda|t+y^+|(y^+)^2 - |t+y|y^2 \text{.}
\end{align*}

\begin{enumerate}[(i)]
\item There do not exist $x,x'$ such that $(x,y^-),(x,y^+),(x',y) \in \Gamma$ and $f(x) < f(x')$.

\item There do not exist $x,x'$ such that $(x,y^-),(x,y^+),(x',y) \in \Gamma$ and $f(x) = f(x')$ with
$g(x) > g(x')$.
\end{enumerate}
\end{lemma}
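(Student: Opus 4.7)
The plan is to mirror the argument for Lemma \ref{minlem}, with the main task being to translate Lemma \ref{modlem} to the maximisation-with-secondary-minimisation setting. Observe that $\pi$ maximises $c(x,y) = |x+y|$ and secondarily minimises $c'(x,y) = |x+y|y^2$ if and only if it minimises $-c$ and secondarily maximises $-c'$; the inequality $|x+y| \leq |x|+|y|$ gives $-c(x,y) \geq -|x|-|y|$, satisfying the integrable lower bound in Lemma \ref{modlem}, while $-c'(x,y) \leq 0$ trivially satisfies the upper-bound hypothesis, and the finite third moments (using $\mu \preceq \nu$ together with $\int |y|^3\, d\nu < \infty$) make all relevant integrals finite. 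The monotonicity set $\Gamma$ obtained this way has the property that for every finitely supported measure $\alpha$ with $\supp(\alpha) \subseteq \Gamma$ and every competitor $\alpha'$ in the sense of Lemma \ref{varlem}, $\int c\, d\alpha \geq \int c\, d\alpha'$, with equality forcing $\int c'\, d\alpha \leq \int c'\, d\alpha'$.

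Given the configuration $(x,y^-),(x,y^+),(x',y) \in \Gamma$ from the hypothesis, I would introduce the exchange pair
\[
\alpha := (1-\lambda)\delta_{(x,y^-)} + \lambda\delta_{(x,y^+)} + \delta_{(x',y)}, \qquad \alpha' := (1-\lambda)\delta_{(x',y^-)} + \lambda\delta_{(x',y^+)} + \delta_{(x,y)}.
\]
Both measures share the $x$-marginal $\delta_x + \delta_{x'}$ and the $y$-marginal $(1-\lambda)\delta_{y^-} + \lambda\delta_{y^+} + \delta_y$, and the choice of weights together with $y = (1-\lambda)y^- + \lambda y^+$ makes each $x$-slice barycenter equal to $y$ under both $\alpha$ and $\alpha'$. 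Hence $\alpha'$ is an admissible competitor in the sense required by Lemma \ref{varlem}.

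A direct expansion then yields $\int c\, d\alpha - \int c\, d\alpha' = f(x) - f(x')$ and $\int c'\, d\alpha - \int c'\, d\alpha' = g(x) - g(x')$. Part (i) follows because $f(x) < f(x')$ would give $\int c\, d\alpha < \int c\, d\alpha'$, violating the inequality on $\Gamma$. For part (ii), $f(x) = f(x')$ translates to $\int c\, d\alpha = \int c\, d\alpha'$, so the secondary clause of Lemma \ref{modlem} forces $g(x) \leq g(x')$, contradicting $g(x) > g(x')$. The only delicate point is the sign bookkeeping in the reduction to the hypotheses of Lemma \ref{modlem}; once that is handled, the rest is a verbatim translation of the minimising case.
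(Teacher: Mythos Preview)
Your proposal is correct and follows the same approach as the paper: construct the exchange pair $\alpha,\alpha'$ as in Lemma~\ref{minlem}, then read off the contradiction to Lemma~\ref{modlem} applied to $(-c,-c')$. If anything, your version is more careful---you use the weights $(1-\lambda)$ on $y^-$ and $\lambda$ on $y^+$ consistent with the definition of $f$ (the paper's proof of Lemma~\ref{minlem} has them swapped), and you spell out the verification that $\alpha'$ is an admissible competitor and that the hypotheses of Lemma~\ref{modlem} are met after the sign flip.
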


\begin{proof}
Obviously it follows from $f(x) < f(x')$ that we have
\[(1-\lambda)|x+y^-|+\lambda|x+y^+| + |x'+y| < (1-\lambda)|x'+y^-|+\lambda|x'+y^+| + |x+y| \text{.}\]
So if we again construct $\alpha$ and $\alpha'$ as in the proof to Lemma \ref{minlem}, then in this case we have
$\int c \,d\alpha < \int c \,d\alpha'$ and therefore a contradiction to Lemma \ref{modlem}

Again the same argument holds for the second part using $f$ and $g$.
\end{proof}

\begin{corollary}
\label{maxcor}
Consider some maximising martingale transport $\pi$ which minimises the secondary
optimisation problem for marginals $\mu \preceq \nu$ with $\supp(\mu) \subseteq \mathbb{R}^{+}$ and
monotonicity set $\Gamma$. Let
$(x,y^+)$,$(x,y^-)$,$(x',y')$ $\in \Gamma$ such that $y^- < -x' < y^+$ and $y^- < y' < y^+$. Then it cannot be
that 
\begin{enumerate}[(i)]
\item $y' \leq -x'$ and $x < x'$ or
\item $-x' \leq y'$ and $x' < x$.
\end{enumerate}
\end{corollary}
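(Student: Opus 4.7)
The plan is to mirror the proof of Corollary \ref{mincor}, using Lemma \ref{maxlem} in place of Lemma \ref{minlem}. First I will choose $\lambda \in (0,1)$ uniquely determined by $y' = (1-\lambda)y^- + \lambda y^+$, which is possible since $y^- < y' < y^+$, and apply Lemma \ref{maxlem} with $y := y'$ to the configuration $(x, y^-), (x, y^+), (x', y') \in \Gamma$. My goal in both cases will be to establish $f(x) < f(x')$, which by Lemma \ref{maxlem}(i) gives the desired contradiction.

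The key geometric input is the shape of $f$: a piecewise-linear tent supported on $[-y^+, -y^-]$, rising with slope $2\lambda$ on $[-y^+, -y']$, falling with slope $-2(1-\lambda)$ on $[-y', -y^-]$, and attaining its maximum $2\lambda(1-\lambda)(y^+ - y^-) > 0$ at $t = -y'$. The hypothesis $y^- < -x' < y^+$ places $x'$ strictly inside the support of the tent, so $f(x') > 0$ is automatic.

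For Case (i), the hypothesis $y' \leq -x'$ means $x' \leq -y'$, so $x'$ lies in the non-decreasing (left) half of the tent; for $x < x'$ the point $x$ is either still in this increasing region (giving $f(x) < f(x')$ by strict monotonicity) or else sits below $-y^+$ (giving $f(x) = 0 < f(x')$). Either way $f(x) < f(x')$. Case (ii) is symmetric: $-x' \leq y'$ puts $x'$ in the non-increasing (right) half, and $x' < x$ yields $f(x) < f(x')$ by the analogous dichotomy ($x$ is either in the strictly decreasing region, or lies beyond $-y^-$ with $f(x) = 0$). Both scenarios contradict Lemma \ref{maxlem}(i). As in Corollary \ref{mincor}, I would accompany the text with two paradigmatic \texttt{tikzpicture} diagrams depicting $f$ together with the points $x, x', -y^-, -y', -y^+$ in each configuration.

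The argument is essentially routine once the tent structure of $f$ is in hand; the only mild obstacle is sign-bookkeeping. Unlike the minimising case, where the forbidden inequality is $f(x) > f(x')$, one must carefully track that Lemma \ref{maxlem} reverses the direction (because $-|\cdot|$ plays the role of the minimised cost), so that here the forbidden configuration is $f(x) < f(x')$. Given this reversal, no new analytic ingredients are needed, and the secondary cost function $g$ is not invoked at this stage — it will be needed only for the analogue of Corollary \ref{mincorb}.
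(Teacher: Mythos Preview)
Your proposal is correct and follows precisely the route the paper has in mind: the paper's own proof reads simply ``Analogous to the case of a minimising martingale transport,'' and your argument is exactly that analogue, with the correct reversal of the forbidden inequality to $f(x)<f(x')$ via Lemma~\ref{maxlem}(i). Your explicit description of the tent structure of $f$ and the ensuing monotonicity argument matches the diagrams the paper draws in Corollary~\ref{mincor}, and your observation that $g$ is not needed here (only in the analogue of Corollary~\ref{mincorb}) is also consistent with the paper.
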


\begin{proof}
Analogous to the case of a minimising martingale transport.
\end{proof}

\begin{corollary}
\label{maxcorb}
Consider some maximising (and secondary minimising) martingale transport 
$\pi$ for marginals $\mu \preceq \nu$ with $\supp(\mu) \subseteq \mathbb{R}^{+}$. 
Let $\Gamma$ be its monotonicity set. 
Let $(x,y^+),(x,y^-),(x',y') \in \Gamma$ with $x' < x$ and $-x \leq y^-$. Then
it cannot be that $y^- < y' < y^+$.
\end{corollary}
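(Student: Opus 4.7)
My plan is to mimic the proof of Corollary \ref{mincorb}, replacing Lemma \ref{minlem} with Lemma \ref{maxlem} and reversing the direction of the secondary inequality, as required by the max/secondary-min setting.

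Assume toward contradiction that $y^- < y' < y^+$ and pick $\lambda \in (0,1)$ with $y' = (1-\lambda)y^- + \lambda y^+$. Let $f$ and $g$ be the two auxiliary functions from Lemma \ref{maxlem}. Both are piecewise linear in $t$ with kinks at $-y^+, -y', -y^-$; the function $f$ is a non-negative ``hat'' that vanishes on $(-\infty,-y^+]\cup[-y^-,\infty)$ and is strictly positive on $(-y^+,-y^-)$, while on the outer region $[-y^-,\infty)$ all absolute values unfold with positive sign, giving
\[
g(t) = t\bigl[(1-\lambda)(y^-)^2 + \lambda(y^+)^2 - (y')^2\bigr] + \text{const} = t\cdot\lambda(1-\lambda)(y^+-y^-)^2 + \text{const},
\]
so $g$ is linear on $[-y^-,\infty)$ with strictly positive slope.

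The hypothesis $-x \leq y^-$ places $x$ in the right outer region $[-y^-,\infty)$, hence $f(x) = 0$. The paradigmatic graphs of $f$ and $g$, drawn exactly in the style of the pictures in the proof of Corollary \ref{mincorb} but with the sides swapped to match the maximising setting, show that the presence of the third point $(x',y')$ with $x' < x$ forces $x'$ to lie in the same right outer region $[-y^-,\infty)$. Consequently $f(x) = f(x') = 0$ while $g(x) > g(x')$, the latter following from $x > x'$ and the positive slope of $g$ in this region. This configuration is forbidden by Lemma \ref{maxlem}(ii), yielding the desired contradiction.

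The main obstacle is the picture-reading step: one must verify that $x'$ really does sit in $[-y^-,\infty)$. Placing $x'$ in the interior $(-y^+,-y^-)$ produces $f(x') > 0 = f(x)$, contradicting Lemma \ref{maxlem}(i); placing $x'$ in the left outer region $(-\infty,-y^+]$ would produce $f(x') = 0$ but then $g$ is linear with negative slope there, so one must examine the value of $g$ across regions and check that the combined picture still leaves Lemma \ref{maxlem}(ii) as the operative contradiction. Writing out the paradigmatic graph for each sub-placement of $x'$, analogously to the two figures accompanying Corollary \ref{mincorb}, makes each sub-case routine.
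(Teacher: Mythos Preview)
Your approach is exactly the paper's: compute the piecewise–linear auxiliary functions $f$ and $g$ from Lemma~\ref{maxlem}, read off $f(x)=f(x')$ and $g(x)>g(x')$, and invoke part~(ii). Your case $x'\in(-y^+,-y^-)$, handled via part~(i), is a correct extra observation the paper does not spell out.

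The gap is your claimed ``routine'' resolution of the sub-case $x'\le -y^+$. This case is \emph{not} disposed of by Lemma~\ref{maxlem}(ii). Take for instance $y^-=-10$, $y^+=-1$, $\lambda=\tfrac12$ (so $y'=-5.5$), $x=10$, $x'=\tfrac12$. All hypotheses are met: $x'<x$, $-x\le y^-$, $y^-<y'<y^+$, and $x,x'\in\mathbb{R}^+$. One computes directly
\[
\int c\,d\alpha=\int c\,d\alpha'=9.5,\qquad \int c'\,d\alpha=155.75\ <\ 611.375=\int c'\,d\alpha',
\]
so $f(x)=f(x')$ but $g(x)<g(x')$. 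Neither (i) nor (ii) of Lemma~\ref{maxlem} excludes this configuration, so the swap competitor gives no contradiction here. The picture for $g$ does not help: on $(-\infty,-y^+]$ the slope of $g$ is $-\lambda(1-\lambda)(y^+-y^-)^2$, and the value $g(-y^+)=(1-\lambda)(y^+-y^-)\bigl[(y^-)^2-(y')^2\bigr]$ can be much larger than $g(-y^-)$, so $g(x')$ can dominate $g(x)$.

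Note that this sub-case requires $y^+\le -x'\le 0$; in every application of Corollary~\ref{maxcorb} in the paper one has $y^+>x>0$, whence $-y^+<0\le x'$ and the problematic sub-case never arises. So your first two sub-cases already suffice for all uses, but the corollary as literally stated is not fully established by your argument (nor, for that matter, by the paper's one-line ``analogous''). If you want a complete proof of the corollary as written, you must either import an additional hypothesis (e.g.\ $-x'\le y^-$, paralleling the $-x'<-x\le y^-$ in Corollary~\ref{mincorb}) or find a different competitor for the case $y^+\le 0$.
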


\begin{proof}
Again the proof is exactly analogous to the case of the minimising transport.
\end{proof}

\begin{proof}[Proof of Theorem \ref{mainthm} (Maximising Part)]
Similar to the minimiser, we first show that the mass of every point $x$ that is distributed to the right of $x$
is concentrated on the graph of a decreasing function $T_u$ (in the same sense given in the proof of the minimising part).

Consider elements $(x,y^+),(x,y^-),(x',y') \in \Gamma$ where $y^-< x \leq y^+$ and $x' < x$ with $x' \leq y'$. Suppose
$y^- < y' < y^+$. If $y^- < -x$ we are in a situation which cannot occur according to Corollary \ref{maxcor} part (ii).

If we have $y^- \geq -x$ we are in a situation which Corollary \ref{maxcorb} tells us is impossible. This shows that the set
$\Gamma \cap \{(x,y) : y > x\}$ is
monotone decreasing. I.e.\ for $x < x'$ and $y \in \Gamma_x \cap \{z : z > x\}$, $y' \in \Gamma_{x'} \cap \{z : z > x'\}$
we have $y' \leq y$. That it is concentrated on a single graph then follows in the same fashion as for the minimiser.

Suppose now, we have uncountably many points that are distributed by $\pi$ to more than two points
(i.e.\ $|\Gamma_x| \geq 3$). This set
will then have an accumulation point $x$ such that we have $y^- < y < x < y^+$ for $(x,y^-),(x,y),(x,y^+) \in \Gamma$. We have to consider the same three cases as for the minimiser with slightly different arguments and an additional case which occurs
because we cannot exclude the secondary diagonal as we did with the minimiser:

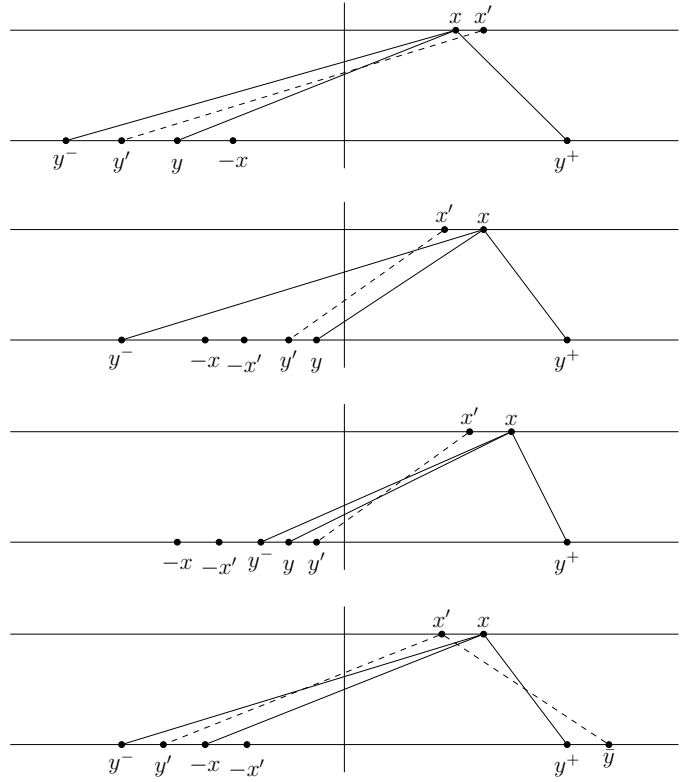
\begin{figure}
\begin{center}
\resizebox{9cm}{!}{
\begin{tikzpicture}[every node/.style={draw,circle,fill=black,inner sep=0pt,minimum size=3pt}]
\draw (-6,0) -- (6,0);
\draw (-6,-2) -- (6,-2);
\draw (0,0.5) -- (0,-2.5);
\node[label=above:$x$] (x) at (2,0) {};
\node[label=below:$-x$] (-x) at (-2,-2) {};
\node (y-) at (-5,-2) {};
\node[draw=none,fill=none,label=above:$y^-$] at (-5,-2.7){};
\node[label=below:$y^+$] (y+) at (4,-2) {};
\node (y) at (-3,-2) {};
\node[draw=none,fill=none,label=above:$y$] at (-3,-2.7){};
\node (y') at (-4,-2) {};
\node[draw=none,fill=none,label=above:$y'$] at (-4,-2.7){};
\node[label=above:$x'$] (x') at (2.5,0) {};
\draw (x) -- (y-);
\draw (x) -- (y);
\draw (x) -- (y+);
\draw[dashed] (x') -- (y');
\end{tikzpicture}
}
\end{center}

\begin{center}
\resizebox{9cm}{!}{
\begin{tikzpicture}[every node/.style={draw,circle,fill=black,inner sep=0pt,minimum size=3pt}]
\draw (-6,0) -- (6,0);
\draw (-6,-2) -- (6,-2);
\draw (0,0.5) -- (0,-2.5);
\node[label=above:$x$] (x) at (2.5,0) {};
\node[label=below:$-x$] (-x) at (-2.5,-2) {};
\node (y-) at (-4,-2) {};
\node[draw=none,fill=none,label=above:$y^-$] at (-4,-2.7){};
\node[label=below:$y^+$] (y+) at (4,-2) {};
\node (y) at (-0.5,-2) {};
\node[draw=none,fill=none,label=above:$y$] at (-0.5,-2.7){};
\node (y') at (-1,-2) {};
\node[draw=none,fill=none,label=above:$y'$] at (-1,-2.7){};
\node[label=above:$x'$] (x') at (1.8,0) {};
\node[label=below:$-x'$] (-x') at (-1.8,-2) {};
\draw (x) -- (y-);
\draw (x) -- (y);
\draw (x) -- (y+);
\draw[dashed] (x') -- (y');
\end{tikzpicture}
}
\end{center}

\begin{center}
\resizebox{9cm}{!}{
\begin{tikzpicture}[every node/.style={draw,circle,fill=black,inner sep=0pt,minimum size=3pt}]
\draw (-6,0) -- (6,0);
\draw (-6,-2) -- (6,-2);
\draw (0,0.5) -- (0,-2.5);
\node[label=above:$x$] (x) at (3,0) {};
\node[label=below:$-x$] (-x) at (-3,-2) {};
\node[label=below:$-x'$] (-x) at (-2.25,-2) {};
\node (y-) at (-1.5,-2) {};
\node[draw=none,fill=none,label=above:$y^-$] at (-1.5,-2.7){};
\node[label=below:$y^+$] (y+) at (4,-2) {};
\node (y) at (-1,-2) {};
\node[draw=none,fill=none,label=above:$y$] at (-1,-2.7){};
\node (y') at (-0.5,-2) {};
\node[draw=none,fill=none,label=above:$y'$] at (-0.5,-2.7){};
\node[label=above:$x'$] (x') at (2.25,0) {};
\draw (x) -- (y-);
\draw (x) -- (y);
\draw (x) -- (y+);
\draw[dashed] (x') -- (y');
\end{tikzpicture}
}
\end{center}

\begin{center}
\resizebox{9cm}{!}{
\begin{tikzpicture}[every node/.style={draw,circle,fill=black,inner sep=0pt,minimum size=3pt}]
\draw (-6,0) -- (6,0);
\draw (-6,-2) -- (6,-2);
\draw (0,0.5) -- (0,-2.5);
\node[label=above:$x$] (x) at (2.5,0) {};
\node[label=below:$-x$] (-x) at (-2.5,-2) {};
\node[label=below:$-x'$] (-x') at (-1.75,-2) {};
\node (y-) at (-4,-2) {};
\node[draw=none,fill=none,label=above:$y^-$] at (-4,-2.7){};
\node[label=below:$y^+$] (y+) at (4,-2) {};
\node[label=below:$\bar{y}$] (-y) at (4.75,-2) {};
\node (y') at (-3.25,-2) {};
\node[draw=none,fill=none,label=above:$y'$] at (-3.25,-2.7){};
\node[label=above:$x'$] (x') at (1.75,0) {};
\draw (x) -- (y-);
\draw (x) -- (-x);
\draw (x) -- (y+);
\draw[dashed] (x') -- (y');
\draw[dashed] (x') -- (-y);
\end{tikzpicture}
}
\end{center}

\caption{The four cases that would contradict the proposed structure for the left part of the transport plan}
\end{figure}

\begin{itemize}
\item $y^- < y < -x$: In this case we can find $x<x'$ and $y^- < y' < -x$ with $(x',y') \in \Gamma$ by Lemma \ref{comblem}. By Corollary \ref{maxcor} (i) this is impossible.
\item $y^- < -x < y$: Another application of Lemma \ref{comblem} gives us $x' < x$ and $-x < -x' < y' < y^+$ with $(x',y') \in \Gamma$. This is a contradiction to Corollary \ref{maxcor} (ii).
\item $-x \leq y^- < y$: Again by Lemma \ref{comblem} we find $x' < x$ and $y'$ with $(x',y') \in \Gamma$ such that $-x \leq y^- < y' < y^+$. For $-x' \leq y^-$ we can see the impossibility of this scheme by an application of Corollary \ref{maxcorb};
for $y^- < -x'$ we obtain the desired contradiction by Corollary \ref{maxcor} (ii).
\item $y^- < y = -x$: Here we find $(x',\bar{y}), (x',y') \in \Gamma$ with $x' < x$, $x' < x < y^+ < \bar{y}$ and also
$y^- < y'$. Now, if $y' < -x'$ holds, the elements $(x',\bar{y}),(x',y')$ and $(x,y^+)$ are a contradiction to
Corollary \ref{maxcor} (i). If, instead $y' \geq -x'$ holds, the same elements are a contradiction to Corollary \ref{maxcorb}.
\end{itemize}

So we have indeed shown that the mass is concentrated on two graphs.
\end{proof}

\begin{remark}
It is also rather straightforward to show that the graph of $T_l$ is decreasing if it is below the secondary diagonal
and increasing if it is above the secondary diagonal.
\end{remark}

\section{Notes on the Continuous Time Case}

\subsection{The one marginal case}

We will now discuss the case of an Asian option on a forward price of an asset
with continuous averaging. 
Given a process $(X_t)_{0 \leq t \leq T}$ (with continuous paths) on some time
interval $[0,T]$, we consider the option with payoff
\[\Phi((X_t)_{0 \leq t \leq T}) := \phi\left(\frac{1}{T}\int_0^TX_t dt\right) \]
where $\phi: \mathbb{R} \to \mathbb{R}$ is a convex lower semi-continuous function. We again
try to find optimal upper and lower bounds for the possible prices of $\Phi$. We do this by providing
candidates to the primal and the dual problem as outlined in Section 2 and checking that they agree.
So we provide a price given by a consistent martingale measure as well as a super-hedge for the same price.
The maximising and minimising models will not have continuous paths though, but finitely many jumps
at deterministic points in time 
at which the paths are either left- or right continuous. We will therefore first prove a lemma to show
that we can approximate the value obtained by such a model by the values from continuous models.
For simplicity we will assume a model with
a single jump only. More concretely we will show the following lemma.

\begin{lemma}
\label{approxlem}
Consider two random variables $(X,Y)$ and a probability law $\mathbb{Q}$ such that $(X,Y)$ is
a two step martingale with respect to $\mathbb{Q}$. 
Let $\phi : \mathbb{R} \to \mathbb{R}$ be a lower semi-continuous convex function. Then we can find a sequence $(Y^n_t)_{t\in [0,T]}$ of continuous time
martingales with continuous paths on $[0,T]$ such that $\lim_{n\to \infty} E\left[\phi\left(\frac{1}{T}\int_0^TY^n_t dt\right)\right]
= E\left[\phi\left(\frac{t_1}{T}X + \frac{T-t_1}{T}Y\right)\right].$
\end{lemma}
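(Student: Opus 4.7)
The plan is to smooth out the single jump of the two-step martingale $(X,Y)$ at time $t_1$ into a continuous martingale transition concentrated on a shrinking window $[t_1-1/n,t_1]$. On a suitably enlarged probability space, introduce a standard Brownian motion $B$ independent of $(X,Y)$ and, via a regular-conditional form of the Skorokhod embedding (for instance Azéma--Yor applied to the law of $Y$ conditional on $X$), obtain an a.s.\ finite stopping time $\tau$ of $B$ such that, conditional on $X$, the variable $X+B_\tau$ has the law of $Y\mid X$; relabelling, we may assume $Y=X+B_\tau$. Picking any continuous strictly increasing bijection $f_n:[t_1-1/n,t_1)\to[0,\infty)$, set
\[
Y^n_t = \begin{cases}X, & t\in[0,t_1-1/n],\\ X+B_{f_n(t)\wedge\tau}, & t\in[t_1-1/n,t_1),\\ Y, & t\in[t_1,T].\end{cases}
\]
As a time-change of a stopped Brownian motion followed by a constant, $Y^n$ has continuous paths and is a martingale in its natural filtration.

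Next I would show that $A_n:=\tfrac1T\int_0^T Y^n_t\,dt$ converges to $A:=\tfrac{t_1}{T}X+\tfrac{T-t_1}{T}Y$. The difference equals $R_n-\tfrac{X}{nT}$ with $R_n:=\tfrac1T\int_{t_1-1/n}^{t_1}Y^n_t\,dt$, and the pathwise estimate $|R_n|\le \tfrac{1}{nT}\sup_{s\le\tau}|X+B_s|$ gives $R_n\to 0$ a.s., since $\sup_{s\le\tau}|X+B_s|$ is a.s.\ finite; together with $X/(nT)\to 0$, this yields $A_n\to A$ almost surely. In addition, the $L^2$-martingale bound $E[(Y^n_t)^2]\le E[Y^2]$ (finite by the hypothesis $\int|y|^3\,d\nu<\infty$ in Theorem~\ref{mainthm}) and Cauchy--Schwarz give $E[R_n^2]\le E[Y^2]/(n^2 T^2)$, so the convergence also holds in $L^2$.

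The passage to the limit inside $\phi$ splits into two bounds. By convexity $\phi$ has an affine minorant $\ell$, so Fatou's lemma applied to the nonnegative sequence $\phi(A_n)-\ell(A_n)$, combined with the lower semi-continuity of $\phi$, delivers $\liminf_n E[\phi(A_n)]\ge E[\phi(A)]$. For the reverse inequality, Jensen with respect to the uniform law on $[0,T]$ gives $\phi(A_n)\le\tfrac1T\int_0^T\phi(Y^n_t)\,dt$; taking expectations and using that $t\mapsto E[\phi(Y^n_t)]$ is non-decreasing (conditional Jensen applied to the martingale $Y^n$), then splitting the outer integral over the three sub-intervals and letting $n\to\infty$ yields $\limsup_n E[\phi(A_n)]\le \tfrac{t_1}{T}E[\phi(X)]+\tfrac{T-t_1}{T}E[\phi(Y)]$.

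The main obstacle is that this last bound is only tight up to the gap in Jensen's inequality, and collapsing it to $E[\phi(A)]$ forces one to replace the lower semi-continuity argument by a genuine continuity/dominated-convergence argument. When $\phi$ is real-valued on $\mathbb{R}$ (the practical case, since Asian payoffs are hockey-stick functions) $\phi$ is automatically continuous, so $\phi(A_n)\to\phi(A)$ a.s., and an $L^p$-bound on $A_n$ coming from the higher-moment assumption on $\nu$ combined with the local polynomial growth of a real-valued convex $\phi$ provides uniform integrability, whence $E[\phi(A_n)]\to E[\phi(A)]$. The extended-real-valued case is then handled by approximating $\phi$ from below by finite convex lower semi-continuous functions and using monotone convergence.
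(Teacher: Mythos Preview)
Your construction of the continuous interpolation via Skorokhod embedding and time-change is essentially the paper's construction (the paper simply cites a reference for the interpolation step), and the almost-sure convergence $A_n\to A$ is fine. The gap is in the passage $E[\phi(A_n)]\to E[\phi(A)]$. Your $\liminf$ via Fatou works, but your $\limsup$ argument does not: the Jensen bound you write down is, as you acknowledge, not tight, and your proposed rescue via uniform integrability invokes hypotheses that are not available. The third-moment bound $\int|y|^3\,d\nu<\infty$ you cite is a hypothesis of Theorem~\ref{mainthm} in Section~\ref{secdis}, not of the present lemma, which assumes only that $(X,Y)$ is a martingale (hence merely $L^1$). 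And a real-valued convex function on $\mathbb{R}$ need have no polynomial growth whatsoever (take $\phi(x)=e^x$), so even granting an $L^p$ bound on $A_n$ you cannot conclude uniform integrability of $\phi(A_n)$ this way.

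The paper's device is to observe that the laws of $A_n$ and of $A$ are all dominated in convex order by the law of $Y$---this is exactly your Jensen step, read as $E[\psi(A_n)]\le E[\psi(Y)]$ for \emph{every} convex $\psi$---and then to invoke a short auxiliary result (Lemma~\ref{convlem}): if $\mu_n\to\mu$ weakly with $\mu_n,\mu\preceq\nu$ and $\int\phi\,d\nu<\infty$, then $\int\phi\,d\mu_n\to\int\phi\,d\mu$. Its proof truncates $\phi=\min(\phi,m)+(\phi-m)_+$; the bounded continuous piece converges by weak convergence, and for the tail one uses that $(\phi-m)_+$ is again convex, so $\int(\phi-m)_+\,d\mu_n\le\int(\phi-m)_+\,d\nu\to 0$ as $m\to\infty$, uniformly in $n$. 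This is precisely the uniform integrability you were after, obtained from convex-order domination alone, with no extra moment hypotheses on $Y$ and no growth conditions on $\phi$. You had the key inequality in hand but did not notice that it applies to $(\phi-m)_+$ as well as to $\phi$.
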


If we have a process $(X_t)$ with a jump at $t_1$ in the form that we set $X_t = X$ for $0\leq t < t_1$
and $X_t = Y$ for $t_1 \leq t \leq T$, then the value achieved by it is clearly the limiting value of the sequence $Y_n$.
We can obtain a similar result for a left continuous jump as well as for finitely many jumps. 

For the proof we will need another lemma which we assume is well known, but as we could not find it in the
literature we will still provide a proof here.

\begin{lemma}
\label{convlem}
Let $\nu,\mu,\mu_1,\mu_2, \dots$ be probability measures such that
$\mu,$ $\mu_1,$ $\mu_2,\dots \preceq \nu$ and $\mu_n$ converges to $\mu$ weakly.
Suppose that $\nu$ is integrable and for some convex function $\phi : \mathbb{R} \to \mathbb{R}$
we have $\int \phi \,d\nu < \infty$. Then 
$\lim_{n \to \infty} \int \phi \, d\mu_n = \int \phi \, d\mu$ holds.
\end{lemma}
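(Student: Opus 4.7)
My plan is to combine weak convergence on a bounded piece of $\phi$ with convex order to control the tails, treating the positive and negative parts of $\phi$ separately.

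First I would observe that every real-valued convex function on $\mathbb{R}$ is continuous and admits an affine minorant: there exist $a,b \in \mathbb{R}$ such that $\phi(x) \geq ax + b$ for all $x$. Since $\nu$ is integrable and $\int \phi \, d\nu < \infty$, this yields $\int \phi^+ \, d\nu < \infty$ and $\int \phi^- \, d\nu < \infty$ (and similarly with $\mu$, $\mu_n$ in place of $\nu$, thanks to convex order applied to the convex functions $\phi^+$ and $x\mapsto |x|$). In particular $\phi(X)$ is integrable for $X$ distributed according to any of these measures.

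Next, to obtain the limit, I would pass to a Skorokhod coupling: choose random variables $X_n \sim \mu_n$ and $X \sim \mu$ on a common probability space with $X_n \to X$ almost surely. Continuity of $\phi$ gives $\phi(X_n) \to \phi(X)$ a.s., so by Vitali's convergence theorem it suffices to show that the family $\{\phi(X_n)\}_n$ is uniformly integrable. I would split this into uniform integrability of $\{\phi^+(X_n)\}_n$ and of $\{\phi^-(X_n)\}_n$.

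For the positive part the key point is that $\phi^+ = \max(\phi,0)$ is convex, and for any $K > 0$ the function $x \mapsto (\phi^+(x) - K)_+$ is also convex (composition of the convex nondecreasing $z \mapsto (z-K)_+$ with the convex $\phi^+$). The convex order $\mu_n \preceq \nu$ therefore gives
\[
\int (\phi^+ - K)_+ \, d\mu_n \;\leq\; \int (\phi^+ - K)_+ \, d\nu,
\]
and the right-hand side tends to $0$ as $K \to \infty$ by dominated convergence (dominated by $\phi^+$, which is $\nu$-integrable). This is the de la Vallée Poussin criterion for uniform integrability of $\{\phi^+(X_n)\}_n$. For the negative part I would use the affine minorant: $\phi^-(x) \leq |a||x| + |b|$, so uniform integrability of $\{\phi^-(X_n)\}_n$ reduces to that of $\{|X_n|\}_n$. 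The latter follows by the same trick applied to the convex function $x \mapsto (|x| - K)_+$, again invoking $\mu_n \preceq \nu$ and integrability of $\nu$.

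The only mild subtlety I anticipate is the bookkeeping around the affine minorant and ensuring $\phi^+$ itself is convex; once those are in place, Vitali closes the argument. No part requires genuine technical work beyond the standard convex-order/de la Vallée Poussin combination.
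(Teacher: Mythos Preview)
Your argument is correct, and at its heart it uses exactly the same estimate as the paper: for a nonnegative convex function $\psi$, the convex order $\mu_n \preceq \nu$ gives $\int (\psi - K)_+\, d\mu_n \leq \int (\psi - K)_+\, d\nu \to 0$, so the tails of $\psi$ under the $\mu_n$ are controlled uniformly by the single measure $\nu$. The difference is purely in packaging. The paper avoids Skorokhod and Vitali entirely: it first reduces to $\phi \geq 0$ by adding an affine minorant (noting that all the measures share the mean of $\nu$, so the affine part contributes the same constant to every integral), then writes $\phi = \psi_m + \phi_m$ with $\psi_m = \phi \wedge m$ bounded and $\phi_m = (\phi - m)_+$ convex, and runs a direct $3\varepsilon$ argument. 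Your route trades that hands-on truncation for the more abstract Skorokhod--Vitali machinery and handles the sign issue by splitting $\phi = \phi^+ - \phi^-$ rather than shifting by an affine function. Both reductions are fine; the paper's version is slightly more self-contained, while yours makes the uniform-integrability structure explicit. One cosmetic point: what you invoke is not really de la Vall\'ee Poussin but the direct characterisation $\sup_n \int (|Y_n| - K)_+ \to 0$ of uniform integrability for nonnegative families.
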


\begin{proof}
First we will assume that $\phi \geq 0$.
Now define the auxiliary function $\phi_m,\psi_m : \mathbb{R} \to \mathbb{R}$ by
$\phi_m := (\phi -m)_+$ and $\psi_m := \phi - \phi_m$.
Observe that $\phi_m$ is clearly convex for all $m$. Furthermore we have that $\psi_m$ is bounded
by $m$ and also $\psi_m$ is a monotone increasing sequence which converges
to $\phi$ pointwise. Therefore, by convex order and monotone convergence we can choose $m$ large enough such that for 
fixed $\varepsilon > 0$ we have
\[(0 \leq) \int \phi_m \, d\mu \leq \int \phi_m \, d\nu = \int \phi - \psi_m \, d\nu < \varepsilon \text{.}\]
Similarly we have $\int \phi_m \, d\mu_n < \varepsilon$.
Now, because $\psi_m$ is bounded, we have by weak convergence that
\[\lim_{n\to \infty} \int \psi_m \, d\mu_n(x) = \int \psi_m \, d\mu \text{.}\]
This means that we can choose $n$ large enough such that $|\int \psi_m \, d(\mu-\mu_n)| < \varepsilon$
Putting this together we can conclude that
\[\left|\int \phi \,d(\mu-\mu_n)\right| \leq \int \phi_m \,d\mu + \int \phi_m \,d\mu_n  + \left|\int \psi_m \, d(\mu-\mu_n)\right|
< 3 \varepsilon \text{.}\]
For general convex $\phi$ we can find an affine function $g(x) := ax +b$ such that
$\psi := \phi + g \geq 0$. As $\psi$ is clearly convex and satisfies $\int \phi \,d\nu < \infty$,
we have $\lim_{n \to \infty} \int \psi \, d\mu_n = \int \psi \, d\mu$. Furthermore we
have $\int g \, d\mu = \int g \, d\mu_n$ for all $n$ because $\mu_n$ and $\mu$ have the same mean
as $\nu$ by the definition of convex order. From this we can conclude that also 
$\lim_{n \to \infty} \int \phi \, d\mu_n = \int \phi \, d\mu$ holds.
\end{proof}

\begin{proof}[Proof of Lemma \ref{approxlem}]
Define the process $(Y^n_t)_{t\in[0,T]}$ by setting $Y^n_t = X$ for $0 \leq t \leq t_1-1/n$ and $Y^n_t = Y$
for $t_1 \leq t \leq T$. For $t_1-1/n < t < t_1$ we can interpolate between $X$ and $Y$ in such a way such that the process
is a continuous martingale. There are various constructions for this, see for instance \cite{He77}. Now
for this process we can easily calculate
\begin{align*}
&E\left[\left|\int_0^TY^n_t\,dt - \frac{t_1}{T}X - \frac{T-t_1}{T}Y\right|\right] \\
& \qquad= \frac{1}{T} E\left[\int_{t_1-1/n}^{t_1} X_t-X\,dt\right]
\leq \frac{1}{T}\int_{t_1-1/n}^{t_1} E[|X_t|]+E[|X|] \, dt \\
&\qquad \leq \frac{1}{nT}E[|Y|] \overset{n\to \infty}{\rightarrow} 0 \text{.}
\end{align*}
So we have convergence in $L^1$ which implies weak convergence of the laws of $\int_0^TY^n_t\,dt$ to
the law of $\frac{t_1}{T}X + \frac{T-t_1}{T}Y$. We also have $\int_0^TY^n_t\,dt, \frac{t_1}{T}X + \frac{T-t_1}{T}Y
\preceq Y$ and $E[\phi(Y)] < \infty$. By Lemma \ref{convlem} we obtain the desired result.
\end{proof}

In a first step we assume that the information we can use for determining the price for this option are vanilla calls
with arbitrary strikes and maturity $T$. By Breeden and Litzenberger \cite{BrLi78} this means that we know the distribution
of $X_T$. We furthermore assume that the law of $X_0$ is given by $\delta_{E[X_T]}$.
With this observation we can easily determine an upper and a lower bound on the possible prices
for $\Phi$. For any possible law we see that
\[E\left[\phi\left(\int_0^TX_t\frac{dt}{T}\right)\right] \leq E\left[\int_0^T\phi(X_t)\frac{dt}{T}\right] \leq \int_0^TE[\phi(X_T)]\frac{dt}{T} = E[\phi(X_T)],\]
\[E\left[\phi\left(\int_0^TX_t\frac{dt}{T}\right)\right] \geq \phi\left(\int_0^TE[X_t]\frac{dt}{T}\right) = \phi(X_0).\]
These bounds are assumed by a joint law for the process such that $X_t = X_T$ for $0 < t \leq T$ and $X_t = X_0$ for
$0\leq t < T$ respectively. This means that we either jump at the beginning to the law of $X_T$ and then stay constant
or that we stay constant until the very end and then jump to the distribution of the terminal value. This means that no
robust superhedge can cost less than $E[\phi(X_T)]$.

The approximation via Lemma \ref{approxlem} then yields that this is indeed the robust bound for processes
with continuous paths.

In this case we can also devise optimal pathwise sub- and superhedging strategies. The subhedge is essentially trivial
by just not investing in the asset. The superhedge can be derived by the following
calculation. For simplicity we assume $\phi$ to be
differentiable. Note that because of the convexity of $\phi$ this is the case almost anywhere and on the remaining
points the derivative we use here can be substituted with an arbitrary element from the subdifferential at this point
\begin{align*}
\phi\left(\int_0^T X_t \frac{dt}{T}\right) &\leq \int_0^T \phi(X_t) \frac{dt}{T} =\phi(X_T) - \int_0^T(\phi(X_T) - \phi(X_t))\frac{dt}{T} \\
&\leq \phi(X_T) - \int_0^T\phi'(X_t)(X_T-X_t)\frac{dt}{T} \\
&= \phi(X_T) - \int_0^T\int_t^T\phi'(X_t)dX_s\frac{dt}{T} \\
& = \phi(X_T) - \int_0^T\left(\frac{1}{T}\int_0^t\phi'(X_s)ds\right)dX_t \text{.}
\end{align*}
The price of this super-hedge is obviously $E[\phi(X_T)] (=\mathcal{P}(\phi(X_T)))$ which makes it optimal. The first term can be replicated
with a portfolio of vanilla calls and puts. The second term gives a trading strategy in the asset, therefore this is really
a super-hedge with the least possible costs.

Note again that the above calculation should be understood to be a pathwise derivation. This means that the
integrals with respect to $dX_t$ are not meant to be Itô-Integrals but regular Lebesgue-Stieltjes-Integrals
with respect to a path of $X_t$. These are defined because the integrands are of bounded total variation
($\phi'(X_t)$ is constant and $\int_0^t \phi'(X_s) ds$ is absolutely continuous). The change of the order of integration
can then be derived by writing the integral explicitly as a limit operation.

\subsection{The two marginal case}

We proceed with considering the next possible step for this problem and assume that we are also given
the market prices for vanilla calls at some intermediate time $0 < t_1 < t_2 = T$. This problem does not appear
to give simple answers to the questions discussed above. Therefore we will not discuss the more intricate
n-marginal case.

From the results in the case with one prescribed marginal law, one might assume that the maximum value is achieved by
an initial jump to $X_{t_1}$ at the beginning and a jump to $X_{t_2}$ right after $t_1$ which is done in a certain
optimal way such that the process remains a martingale. The minimum value would then be achieved
by instead staying constant for the time interval $[0,t_1)$, then jumping to $X_{t_1}$ and staying
constant again in $[t_1,t_2)$ before jumping to $X_{t_2}$ (where the last jump does not contribute anything
to the final value of the integral). This however turns out to be wrong in the
case of the minimiser and we can find easy
counterexamples where it is possible to achieve smaller values then the smallest value which can be attained by
a strategy following the above restrictions.

\begin{example}
We consider the problem with $\phi=|\cdot|$ and marginals
\[\mu_1 = \mu_2 := (\delta_{-2}+\delta_{-1}+\delta_{1}+\delta_{2})/4 \text{.}\]
The process then has to start in $\int x \, d\mu_1(x) = 0$. The price according to the process which we would hope
to be the minimiser would then be $\int |x| \, d\mu_1(x) = 3/2$.
Alternatively we define a two dimensional random variable $(Y,Z)$ which has law
\[\pi := \frac{1}{4}(\delta_{(1/4,-1)}+\delta_{(-1/4,1)}) + 
	\frac{5}{28}(\delta_{(1/4,2)}+\delta_{(-1/4,-2)}) + \frac{1}{14}(\delta_{(0,2)}+\delta_{(0,-2)}) \text{.}\]
Now we define the process by $X_0 := 0$, $X_t := Y$ for $0 < t < 1$ and $X_t := Z$ for $1 \leq t \leq 2$.
This process does have the right marginals and is a martingale but implies the price $\frac{41}{28} < \frac{3}{2}$. 
Therefore the above described scheme is not optimal in general.
\end{example}

The only thing we can indeed easily show is that one minimising model will have the property that
$X_t = X_{t_1}$ for $t_1 \leq t < t_2$ while one maximising model fulfills $X_t = X_{t_2}$ for $t_1 < t \leq t_2$.
This follows in the case of the minimising models by an application of Jensen's inequality for conditional 
expectation and the martingale property of $X_t$:
\begin{align*}
E\left[\phi\left(\int_0^{T}X_t \frac{dt}{T}\right)\right] &= E\left[E\left[\phi\left(\left.\int_0^{T}X_t \frac{dt}{T}\right)\right|\mathbb{F}_{t_1}\right]\right] \\
&\geq E\left[\phi\left(\int_0^{T}E[X_t|\mathbb{F}_{t_1}] \frac{dt}{T}\right)\right] \\
&= E\left[\phi\left(\frac{1}{T}\left(\int_0^{t_1}X_t dt + X_{t_1}(T-t_1)\right)\right)\right] \text{.}
\end{align*}
The calculation in the case of the maximiser is somewhat more intricate, but equally elementary

\begin{align*}
&E\left[\phi\left(\int_0^{T}X_t \frac{dt}{T}\right)\right] \\
	&\quad = E\left[\phi\left(\int_{t_1}^{T}\frac{1}{T}\left(\int_0^{t_1}X_s\frac{ds}{T-t_1} + X_t\right)dt\right)\right] \\
	&\quad \leq \int_{t_1}^T E\left[\phi\left(\frac{1}{T}
		\left(\int_0^{t_1}X_sds + (T-t_1)X_t\right)\right)\right]\frac{dt}{T-t_1} \\
	&\quad\leq \int_{t_1}^T E\left[\phi\left(\frac{1}{T}\left(\int_0^{t_1}X_sds + (T-t_1)X_{T}\right)\right)\right] \\
	& \qquad -E\left[\phi'\left(\frac{1}{T}\left(\int_0^{t_1}X_sds + (T-t_1)X_t\right)\right) \right.\\
	&\qquad \qquad \qquad\left. \times \vphantom{\int_0^{t_1}}\frac{T-t_1}{T}E[X_T - X_t|\mathbb{F}_t]\right] \frac{dt}{T-t_1} \\
	&\quad= E\left[\phi\left(\frac{1}{T}\left(\int_0^{t_1}X_sds + (T-t_1)X_{T}\right)\right)\right]\text{.}
\end{align*}
Now, for the maximiser, we were not able to find a counterexample to the conjecture above. We
will therefore keep it as a conjecture. More concretely formulated the conjecture is:

\begin{conjecture}
Suppose $(X_t)_{t \in [0,T]}$ is a martingale on some measurable filtered space with respect to some
law $\mathbb{Q}$. For fixed $0 < t_1 < T$ set $\mu := \mathrm{Law}(X_{t_1})$ and $\nu := \mathrm{Law}(X_T)$.
Choose $\pi \in \mathcal{M}(\mu,\nu)$ such that 
\[\int \phi(t_1x+(T-t_1)y) \, d(\pi-\pi')(x,y) \geq 0 \quad \forall \pi' \in \mathcal{M}(\mu,\nu) \text{.}\]
Then it holds for any $0 \leq t \leq t_1$ that
\[E_{\mathbb{Q}}[\phi(t_1X_t + (T-t_1)X_T)] \leq \int \phi(t_1x+(T-t_1)y) \, d\pi(x,y) \text{.} \]
\end{conjecture}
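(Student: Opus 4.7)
My plan is to attack the conjecture via strong duality for the two-marginal martingale optimal transport problem applied to the cost $c(x,y) := \phi(ax+by)$, with the shorthand $a := t_1$, $b := T - t_1$. By optimality of $\pi$ and a two-marginal specialisation of the duality result of Beiglböck, Henry-Labordère and Penkner quoted above as Theorem~4.3, we obtain (possibly up to an $\varepsilon$-approximation) functions $\phi_1 \in L^1(\mu)$, $\phi_2 \in L^1(\nu)$ and a bounded measurable $h$ satisfying the pointwise superhedging inequality
\[\phi(ax+by) \leq \phi_1(x) + \phi_2(y) + h(x)(y-x), \qquad (x,y) \in \mathbb{R}^2,\]
together with the duality identity $\int \phi_1\,d\mu + \int \phi_2\,d\nu = \int \phi(ax+by)\,d\pi(x,y)$.

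I then substitute $(X_t, X_T)$ into this pointwise bound and take expectation under $\mathbb{Q}$:
\[E_{\mathbb{Q}}[\phi(aX_t + bX_T)] \leq E_{\mathbb{Q}}[\phi_1(X_t)] + E_{\mathbb{Q}}[\phi_2(X_T)] + E_{\mathbb{Q}}[h(X_t)(X_T - X_t)].\]
Since $X_T \sim \nu$ under $\mathbb{Q}$, the second summand equals $\int \phi_2\,d\nu$. The third summand vanishes: $h(X_t)$ is $\mathbb{F}_t$-measurable and the martingale property gives $E_{\mathbb{Q}}[X_T - X_t \mid \mathbb{F}_t] = 0$, so the tower property yields $E_{\mathbb{Q}}[h(X_t)(X_T - X_t)] = 0$.

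For the first summand, the martingale identity $X_t = E_{\mathbb{Q}}[X_{t_1} \mid \mathbb{F}_t]$ together with conditional Jensen's inequality gives $\mathrm{Law}_{\mathbb{Q}}(X_t) \preceq \mu$ in convex order. Provided that $\phi_1$ is convex, this yields $E_{\mathbb{Q}}[\phi_1(X_t)] \leq \int \phi_1\,d\mu$, and combining the three bounds produces
\[E_{\mathbb{Q}}[\phi(aX_t + bX_T)] \leq \int \phi_1\,d\mu + \int \phi_2\,d\nu = \int \phi(ax+by)\,d\pi(x,y),\]
which is precisely the desired inequality. The boundary case $t = 0$ is covered by the same argument, reducing to Jensen's inequality $\phi_1(X_0) \leq \int \phi_1\,d\mu$ for convex $\phi_1$ since $X_0 = \int x\,d\mu(x)$.

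The argument therefore hinges on the structural question of whether the dual optimiser $(\phi_1,\phi_2,h)$ can be chosen with $\phi_1$ convex. In the absence of a hedging term (i.e.\ with $h \equiv 0$) convexity is automatic, since $\phi_1(x) = \sup_y [\phi(ax+by) - \phi_2(y)]$ is then a supremum of convex functions of $x$; the martingale correction $h(x)(y-x)$, however, introduces a non-linear perturbation in $x$ that can spoil convexity of the optimiser. Two plausible routes suggest themselves: a convexification argument, replacing $\phi_1$ by its convex envelope and simultaneously adjusting $\phi_2$ and $h$ so as to preserve the pointwise superhedge without increasing the dual objective; or a structural analysis of the maximising martingale coupling for this cost, in the spirit of the explicit treatment of $\phi(\cdot) = |\cdot|$ in Section~\ref{secmax}, from whose geometry one might read off convexity of $\phi_1$ directly. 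Establishing convexity of $\phi_1$ at the dual optimum is, I expect, the main obstacle, and is presumably the reason the author has left the statement as a conjecture.
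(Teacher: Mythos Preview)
This statement is explicitly a \emph{Conjecture} in the paper; the author writes ``While we cannot prove this, so far, we will prove an important special case'' (Theorem~\ref{domthm}, the case $\phi = (\cdot - K)_+$ with $T=2$, $t_1=1$). There is thus no proof in the paper to compare against.

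Your duality strategy is natural, and you have correctly isolated the obstruction: the duality result you invoke (Theorem~4.3, from \cite{BeHLPe13}) provides superhedging functions $\phi_1,\phi_2,h$ but gives no control on the \emph{shape} of $\phi_1$, and in particular does not force it to be convex. Without convexity of $\phi_1$ the step $E_{\mathbb{Q}}[\phi_1(X_t)] \leq \int \phi_1\,d\mu$ fails, and neither of your two suggested fixes (convexification of $\phi_1$ with compensating adjustments to $\phi_2,h$; or reading convexity off the geometry of the optimal coupling) is known to work in general. The primal infimum is, moreover, not always attained, so even the $\varepsilon$-approximate version of the argument requires producing, for each $\varepsilon>0$, a superhedge with convex first leg and cost within $\varepsilon$ of the optimum---which is the same open question in disguise.

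For contrast, the paper's proof of the special case (Theorem~\ref{domthm}) avoids the dual functions entirely. It disintegrates the three-step martingale law of $(X_t,X_1,X_2)$ into finitely supported ``Binomial Transport Plans'' and then carries out a lengthy case analysis (Lemmas~\ref{comlem} and~\ref{domlem}, using the forbidden configurations of Corollary~\ref{illcor}) to show that for each such plan either its right part is a suboptimal transport for $|x+y|$, or its right part dominates its left part in cost. This combinatorial route is tied to the piecewise-linear structure of $|\cdot|$ and does not obviously extend to general convex $\phi$, which is presumably why the full statement remains open.
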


If this is true, we can proceed in the above calculation in the following way:
\begin{align*}
&E\left[\phi\left(\frac{1}{T}\left(\int_0^{t_1}X_sds + (T-t_1)X_{T}\right)\right)\right] \\
&\quad= E\left[\phi\left(\int_0^{t_1} \frac{1}{T}\left(t_1 X_s + (T-t_1)X_{T}\right)\frac{ds}{t_1}\right)\right] \\
&\quad \leq \int_0^{t_1} E\left[ \phi\left(\frac{1}{T}\left(t_1 X_s + (T-t_1)X_{T}\right) \right) \right] \frac{ds}{t_1} \\
&\quad \overset{!}{\leq} E_{\pi}\left[\phi\left(\frac{1}{T}\left(t_1 X_{t_1} + (T-t_1)X_{T}\right) \right) \right] \text{,}
\end{align*}
where we used the conjecture only for the last inequality and chose $\pi$ as the maximising martingale transport
plan.

While we cannot prove this, so far, we will prove an important special case of this conjecture which we will formulate
in the following
\begin{theorem}
\label{domthm}
Suppose $(X_t)_{t \in [0,2]}$ is a martingale on some measurable filtered space with respect to some
law $\mathbb{Q}$. Set $\mu := \mathrm{Law}(X_1)$ and $\nu := \mathrm{Law}(X_2)$.
Choose $\pi \in \mathcal{M}(\mu,\nu)$ such that 
\[\int (x+y-K)_+ \, d(\pi-\pi')(x,y) \geq 0 \quad \forall \pi' \in \mathcal{M}(\mu,\nu) \text{.}\]
Then it holds for any $0 \leq t \leq 1$ that
\[E_{\mathbb{Q}}[(X_t+ X_2-K)_+] \leq \int (x+y-K)_+ \, d\pi(x,y) \text{.} \]
\end{theorem}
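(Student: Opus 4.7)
The plan is to combine the strong duality of Beigl\"ock--Henry-Labord\`ere--Penkner stated earlier in this section with the martingale property of $(X_s)$ and Jensen's inequality for convex order. First I would apply the duality to the two-marginal problem on $\mathcal{M}(\mu,\nu)$ with cost $c(x,y) := (x+y-K)_+$; this function is $1$-Lipschitz and hence satisfies the linear growth condition required by the duality theorem, and $\mu, \nu$ are automatically in convex order as $\pi \in \mathcal{M}(\mu,\nu)$ exists. Strong duality produces Borel functions $\phi^*,\psi^*,h^* : \mathbb{R} \to \mathbb{R}$ with
\[\phi^*(v) + \psi^*(w) + h^*(v)(w-v) \geq (v+w-K)_+ \quad \text{for all } v,w \in \mathbb{R},\]
and $\int \phi^* \, d\mu + \int \psi^* \, d\nu = \int (x+y-K)_+ \, d\pi$.

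The key structural claim I would establish next is that the optimising $\phi^*$ may be chosen \emph{convex}. Since $c$ is convex in its first argument and $\pi$ is the primal maximiser whose support is concentrated on at most two graphs by Theorem \ref{mainthm} (together with the secondary diagonal from Section \ref{secmax}), one can read off an explicit dual potential on $\supp(\mu)$ and extend it convexly to all of $\mathbb{R}$, taking $h^*$ to be a sub-gradient of $\phi^*$ at the appropriate points so as to preserve the pointwise super-hedge inequality. This is the delicate step and the one I expect to require the most care.

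With a convex $\phi^*$ in hand, I would substitute $v = X_t$ and $w = X_2$ in the super-hedge to obtain the pathwise bound
\[(X_t + X_2 - K)_+ \leq \phi^*(X_t) + \psi^*(X_2) + h^*(X_t)(X_2 - X_t).\]
Taking expectation under $\mathbb{Q}$ annihilates the trading term, since $h^*(X_t)$ is $\mathcal{F}_t$-measurable and $E_{\mathbb{Q}}[X_2 - X_t \mid \mathcal{F}_t] = 0$ by the martingale property of $(X_s)_{s \in [0,2]}$. Moreover the identity $X_t = E_{\mathbb{Q}}[X_1 \mid \mathcal{F}_t]$ gives $\mathrm{Law}_{\mathbb{Q}}(X_t) \preceq \mu$ in convex order, so Jensen's inequality applied to the convex $\phi^*$ yields $E_{\mathbb{Q}}[\phi^*(X_t)] \leq \int \phi^* \, d\mu$. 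Combining these ingredients,
\[E_{\mathbb{Q}}[(X_t + X_2 - K)_+] \leq \int \phi^* \, d\mu + \int \psi^* \, d\nu = \int (x+y-K)_+ \, d\pi,\]
which is the desired inequality.

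The main obstacle is precisely the convexity of $\phi^*$: the generic BHP duality produces only measurable functions, and $E_{\mathbb{Q}}[\phi^*(X_t)] \leq \int \phi^* \, d\mu$ is genuinely equivalent to a convexity requirement on $\phi^*$ since $\mathrm{Law}(X_t)$ ranges over all measures in convex order below $\mu$ as $\mathbb{Q}$ and $t$ vary. Leveraging the explicit two-graph description of the maximiser from Theorem \ref{mainthm}, and possibly a convex-envelope construction that simultaneously readjusts $h^*$, is where the technical content of the proof must reside. The remaining steps---the pointwise super-hedge, the martingale annihilation of the trading term, and the Jensen step---are essentially mechanical once convexity has been secured.
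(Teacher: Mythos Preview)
Your approach is fundamentally different from the paper's and, as you correctly identify, its viability hinges entirely on the convexity of $\phi^*$ --- a step you do not carry out. Two concrete obstacles make this more serious than a mere technicality. First, the BHP duality theorem quoted in this section asserts only that the \emph{martingale-measure} supremum is attained; the paper explicitly notes that the superhedging infimum need not be a minimum, so optimal $(\phi^*,\psi^*,h^*)$ may fail to exist and you would have to work with $\varepsilon$-optimal triples while controlling their convexity uniformly. Second, Theorem~\ref{mainthm}, which you invoke to read off explicit potentials, carries hypotheses (continuity of $\mu$, $\supp\mu\subseteq\mathbb{R}^+$, finite third moment of $\nu$) that are absent from the present statement, so the two-graph structure is not available in the required generality. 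Even granting both points, extracting a convex $\phi^*$ from the graph geometry is itself a substantial task with no mechanism indicated; your own observation that the convexity requirement is essentially \emph{equivalent} to the theorem means you have reformulated rather than reduced the problem.

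The paper's proof avoids duality entirely. It projects $(X_t,X_1,X_2)$ to a three-step martingale law $\beta$ on $\mathbb{R}^3$ and disintegrates $\beta$ into elementary \emph{Binomial Transport Plans} (three-step martingales supported on at most four terminal points). For each piece it replaces the middle-to-last step by its own $|x+y|$-maximising coupling and further decomposes so that every piece is the right part of a BTP. The core is the purely combinatorial Lemma~\ref{domlem}: for any BTP, either its right part is a suboptimal coupling for $|x+y|$, or the right part dominates the left part in cost. Since after the replacement each right part is optimal by construction, case (i) is excluded and case (ii) delivers the inequality piecewise; integrating over the disintegration reassembles the global bound. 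This is a primal, case-by-case argument (the supporting Lemma~\ref{comlem} runs through nine configurations) that never appeals to dual potentials or their convexity, and it works for arbitrary marginals $\mu,\nu$ in convex order.
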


We will prove this by manually computing the result for finite measures of a special form and disintegrating
the original measures into parts of this form. We will call these finite (martingale) measures
\begin{definition}[Binomial Transport Plan]
Let $x^- \leq x \leq x^+$ be arbitrary real numbers. Additionally, $y^{--} \leq x^- \leq y^{-+}$ and
$y^{+-} \leq x^+ \leq y^{++}$ are also arbitrary. If $x^+ > x^-$, $y^{-+} > y^{--}$ and 
$y^{++} > y^{+-}$, define the factors
$\lambda^+ := \frac{x - x^-}{x^+ - x^-} = 1 - \lambda^-$, 
$\lambda^{-+} = \frac{x^- - y^{--}}{y^{-+}-y^{--}} = 1-\lambda^{--}$ and
$\lambda^{++} = \frac{x^+ - y^{+-}}{y^{++}-y^{+-}} = 1-\lambda^{+-}$.
 Otherwise just set
$\lambda^+ = 1$, $\lambda^{-+}=1$ or $\lambda^{++}=1$, respectively.
Then we will call the three step martingale transport plan
\begin{align*}
\pi & := \lambda^{-}\lambda^{--}\delta_{(x,x^-,y^{--})} +\lambda^{-}\lambda^{-+}\delta_{(x,x^-,y^{-+})} \\
& \qquad + \lambda^{+}\lambda^{+-}\delta_{(x,x^+,y^{+-})} +\lambda^{+}\lambda^{++}\delta_{(x,x^+,y^{++})} 
\end{align*}
a \emph{Binomial Transport Plan} and abbreviate it with \emph{BTP}. We call the \emph{right part} of $\pi$ the two step martingale transport plan
\begin{align*}
\pi_r & := \lambda^{-}\lambda^{--}\delta_{(x^-,y^{--})} +\lambda^{-}\lambda^{-+}\delta_{(x^-,y^{-+})} \\
& \qquad + \lambda^{+}\lambda^{+-}\delta_{(x^+,y^{+-})} +\lambda^{+}\lambda^{++}\delta_{(x^+,y^{++})} \text{.}
\end{align*}
The \emph{left part} of $\pi$ will be the two step martingale transport plan
\begin{align*}
\pi_l & := \lambda^{-}\lambda^{--}\delta_{(x,y^{--})} +\lambda^{-}\lambda^{-+}\delta_{(x,y^{-+})} \\
& \qquad + \lambda^{+}\lambda^{+-}\delta_{(x,y^{+-})} +\lambda^{+}\lambda^{++}\delta_{(x,y^{++})} \text{.}
\end{align*}
\end{definition}

Now we will reformulate Theorem \ref{domthm} for the BTP's. The following lemma and
the proof are formulated for $|x+y|$ instead of $(x+y - K)_+$ which is equivalent because it holds for arbitrary BTP's.
See Section \ref{secdis} for an explanation of this fact.

\begin{lemma}[Dominance of the Right Part of a BTP]
\label{domlem}
Let $\pi$ be an arbitrary BTP with nodes $(x,x^-,x^+,y^{--},y^{-+},y^{+-},y^{++})$. Then
one of the following has to hold:
\begin{enumerate}[(i)]
\item $\pi_r$ is a suboptimal martingale transport plan for the 
cost function $c(x,y):=|x+y|$. I.e.\ there exists a competitor
$\tilde{\pi}_r$ such that $\int |x+y|\, d\pi_r(x,y) < \int |x+y|\,d\tilde{\pi}_r(x,y)$.
\item $\pi_r$ has higher cost for the cost function $|x+y|$ than $\pi_l$. I.e.\
$\int |x+y|\, d\pi_l(x,y) \leq \int |x+y|\,d\pi_r(x,y)$.
\end{enumerate}
\end{lemma}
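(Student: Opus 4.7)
The plan is to apply the strong duality theorem from Section 4.1 (attributed to \cite{BeHLPe13}) to the two-marginal optimal martingale transport problem in $\mathcal{M}(\mu_r,\nu_r)$, where $\mu_r:=\lambda^-\delta_{x^-}+\lambda^+\delta_{x^+}$ and $\nu_r$ is the common second marginal of $\pi_r$ and $\pi_l$ (both put mass $\lambda^\pm\lambda^{\pm\pm}$ at the four points $y^{\pm\pm}$). The crucial auxiliary observation is that $\mu_r$ has mean $\lambda^-x^-+\lambda^+x^+=x$ by the definition of the BTP weights, so $\delta_x\preceq\mu_r$ in the convex order; equivalently, $\phi(x)\leq E_{\mu_r}[\phi]$ for every convex $\phi$.

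Assuming case (i) fails, $\pi_r$ maximises $\int|u+v|\,d\tilde{\pi}(u,v)$ over $\tilde{\pi}\in\mathcal{M}(\mu_r,\nu_r)$. The cost $c(u,v):=|u+v|$ is continuous with linear growth, so the strong duality theorem applies to $(\mu_r,\nu_r,c)$: for every $\epsilon>0$ one may pick a semi-static superhedging triple $(\phi_1,\phi_2,h)$, with $\phi_1,\phi_2$ convex, satisfying $|u+v|\leq\phi_1(u)+\phi_2(v)+h(u)(v-u)$ for every $(u,v)\in\mathbb{R}^2$ and $E_{\mu_r}[\phi_1]+E_{\nu_r}[\phi_2]\leq\int|u+v|\,d\pi_r+\epsilon$. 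Integrating this pointwise inequality against $\pi_l$, whose first marginal is $\delta_x$, whose second marginal is $\nu_r$, and which satisfies $\int(v-x)\,d\pi_l=0$ by the martingale property, gives
\[
\int|u+v|\,d\pi_l\leq\phi_1(x)+E_{\nu_r}[\phi_2].
\]
Applying Jensen's inequality in the form $\phi_1(x)\leq E_{\mu_r}[\phi_1]$ (by convexity of $\phi_1$ and $\delta_x\preceq\mu_r$) and combining with the near-optimality bound yields $\int|u+v|\,d\pi_l\leq\int|u+v|\,d\pi_r+\epsilon$; sending $\epsilon\to 0$ delivers case (ii).

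The principal obstacle I anticipate is ensuring that the superhedging component $\phi_1$ can indeed be chosen convex, since this is what powers the Jensen step; this is built into the definition of semi-static portfolio in Section 2 (the $\phi_i$ are required to be convex, reflecting that they are replicated by portfolios of European calls) and is therefore automatic from the strong duality theorem as stated here. A small but necessary sanity check is that $\pi_l$ really is a martingale from $\delta_x$ to $\nu_r$, which follows from the BTP mean identity $\lambda^-(\lambda^{--}y^{--}+\lambda^{-+}y^{-+})+\lambda^+(\lambda^{+-}y^{+-}+\lambda^{++}y^{++})=\lambda^-x^-+\lambda^+x^+=x$. Once these two items are secured, no case analysis on the signs or orderings of the $y^{\pm\pm}$ is required, which seems appreciably cleaner than attempting to construct an improved explicit competitor $\tilde{\pi}_r$ by hand.
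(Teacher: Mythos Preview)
Your approach is genuinely different from the paper's. The paper proves Lemma~\ref{domlem} by an exhaustive case analysis: it first records a symmetry lemma (Lemma~\ref{symlem}) and a long list of sign/ordering configurations under which (ii) holds by direct computation (Lemma~\ref{comlem}), and then walks through every possible relative position of the seven nodes of the BTP, showing in each case that either one of those configurations applies or a forbidden pattern from Corollary~\ref{illcor} appears, forcing (i). Your route via duality and Jensen avoids all of this and would be a substantial simplification if it went through.

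The gap is exactly the point you flag and then wave away. You need the near-optimal superhedge to have $\phi_1$ convex, because that is the sole input to the Jensen step $\phi_1(x)\leq\lambda^-\phi_1(x^-)+\lambda^+\phi_1(x^+)$. You justify this by pointing to the paper's description of semi-static portfolios in Section~4.1, where indeed the phrase ``$\phi_i\in L^1(\mu_i)$ convex'' appears. But the duality theorem actually proved in \cite{BeHLPe13} does \emph{not} restrict the $\phi_i$ to convex functions; the convexity clause in this paper's summary is a presentational choice (reflecting that convex payoffs are synthesised from calls), not a property delivered by the cited theorem. In martingale optimal transport one can always take $\phi_2$ convex when $c(u,v)$ is convex in $v$, by replacing $\phi_2$ with $\sup_u\bigl(c(u,\cdot)-\phi_1(u)-h(u)(\cdot-u)\bigr)$; the analogous replacement for $\phi_1$ fails because the infimum over the trading component $h(u)$ destroys convexity in $u$. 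So ``automatic from the strong duality theorem as stated here'' is not a safe resolution.

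Concretely, your argument reduces the lemma to the claim that $\max_{\pi\in\mathcal{M}(\mu_r,\nu_r)}\int|u+v|\,d\pi\ \geq\ \int|x+v|\,d\nu_r(v)$ whenever $x$ is the barycentre of $\mu_r$. This statement is true (it is equivalent to the lemma, which the paper does prove), but your proof of it is incomplete: without convexity of $\phi_1$ the chain of inequalities stalls at $\int|u+v|\,d\pi_l\leq\phi_1(x)+E_{\nu_r}[\phi_2]$, and there is no reason this is bounded by $E_{\mu_r}[\phi_1]+E_{\nu_r}[\phi_2]$. If you want to keep the duality route, you would need an independent argument that for two-point $\mu_r$ and the cost $|u+v|$ a near-optimal dual triple can be chosen with $\phi_1(x)\leq\lambda^-\phi_1(x^-)+\lambda^+\phi_1(x^+)$; this is not obvious and is essentially the content of the lemma. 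The paper's case analysis is clumsy, but it is self-contained and does not rely on this unproved refinement of duality.
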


We will defer the proof to later. Using Lemma \ref{domlem} we can now easily prove 
Theorem \ref{domthm}.

\begin{proof}[Proof of Theorem \ref{domthm}]
Note that $(X_t,X_1,X_2)$ is a three-step martingale with respect to $\mathbb{Q}$. We can forget about
the rest of the process and consider only the projection of $\mathbb{Q}$ via $(X_t,X_1,X_2)$ on $\mathbb{R}^3$. We call this
measure $\beta$. It is obvious that we can disintegrate $\beta$ into BTP's. Set
$\beta = \int \beta_x \, d\mu(x)$ where $\mu$ is (w.l.o.g.) some probability measure on $\mathbb{R}$
which we can use as an index space. Every $\beta_x$ is a BTP. Now for any $\beta_x$ we consider the measures
$\mu_x = \mathrm{proj}^y\#\beta_x$ and $\nu_x = \mathrm{proj}^z\#\beta_x$,
i.e.\ the last two marginals of $\beta_x$. These are in convex order, so we can find
the maximal martingale transport plan for the cost $(x+y-K)_+$ with marginals $\mu_x,\nu_x$. We
call it $\pi_x$. This is a finite measure on $\mathbb{R}^2$ but not necessarily the right part of a BTP.
But we can (finitely) disintegrate $\pi_x$ again into $\pi_x = \sum_{i=1}^{n_x} a_{x,i}\pi_{x,i}$ with
$a_{x,i} \in \mathbb{R}^+$ and $a_{x,1}+\dots+a_{x,n_x}=1$. The decomposition is chosen such that every
$\pi_{x,i}$ is the right part of a BTP and we have $\int s \, d\mu_x(s) = \int s \, d\pi_{x,i}(s,t)$.
See also Figure \ref{modgraph} for this procedure.

Now we need to reassemble the pieces to arrive at the result we want to show. For every $\pi_{x,i}$ it is
clear how to extend it to a full BTP which we will denote by $\tilde{\pi}_{x,i}$. By definition it
can never be the case that $\tilde{\pi}_{x,i}$ fulfills (i) in Lemma \ref{domlem}.

Now we can calculate
\begin{align*}
&\int (x_1+x_3 - K)_+ \, d\beta(x_1,x_2,x_3)\\
& \qquad= \int\int (x_1+x_3 - K)_+ \, d\beta_x(x_1,x_2,x_3) d\mu(x) \\
& \qquad=\int\int (x_1+x_3 - K)_+ \, d\tilde{\pi}_x(x_1,x_2,x_3) d\mu(x) \\
& \qquad= \int\sum_{i=1}^{n_x} \int (x_1+x_3 - K)_+ \, d\tilde{\pi}_{x,i}(x_1,x_2,x_3) d\mu(x) \\
& \qquad\leq \int\sum_{i=1}^{n_x} \int (x_2+x_3 - K)_+ \, d\pi_{x,i}(x_2,x_3) d\mu(x) \\
& \qquad= \int (x_2+x_3 - K)_+ \, d\pi(x_2,x_3)
\end{align*}
which shows the result. For the inequality we used that $\tilde{\pi}_{x,i}$ fulfills (ii) in Lemma \ref{domlem}.
\begin{figure}
\begin{tabular}{ccccc}
\parbox[c]{3cm}{
\resizebox{3cm}{4cm}{
\begin{tikzpicture}[every node/.style={draw,circle,fill=black,inner sep=0pt,minimum size=3pt}]
\draw (0,-4) -- (0,4);
\draw (2,-4) -- (2,4);
\draw (4,-4) -- (4,4);
\node (x) at (0,0) {};
\node (y1) at (2,2) {};
\node (y2) at (2,-2) {};
\node (z1) at (4,3) {};
\node (z2) at (4,1) {};
\node (z3) at (4,-1) {};
\node (z4) at (4,-3) {};
\draw (x) -- (y1);
\draw (x) -- (y2);
\draw (y1) -- (z1);
\draw (y1) -- (z3);
\draw (y2) -- (z2);
\draw (y2) -- (z4);
\end{tikzpicture}}} &
$\longrightarrow$&
\parbox[c]{3cm}{
\resizebox{3cm}{4cm}{
\begin{tikzpicture}[every node/.style={draw,circle,fill=black,inner sep=0pt,minimum size=3pt}]
\draw (0,-4) -- (0,4);
\draw (2,-4) -- (2,4);
\draw (4,-4) -- (4,4);
\node (x) at (0,0) {};
\node (y1) at (2,2) {};
\node (y2) at (2,-2) {};
\node (z1) at (4,3) {};
\node (z2) at (4,1) {};
\node (z3) at (4,-1) {};
\node (z4) at (4,-3) {};
\draw (x) -- (y1);
\draw (x) -- (y2);
\draw (y1) -- (z1);
\draw (y1) -- (z2);
\draw (y1) -- (z4);
\draw (y2) -- (z1);
\draw (y2) -- (z3);
\draw (y2) -- (z4);
\end{tikzpicture}}} &
$\longrightarrow$ &
\begin{tabular}{c}
\resizebox{1.5cm}{2cm}{
\begin{tikzpicture}[every node/.style={draw,circle,fill=black,inner sep=0pt,minimum size=3pt}]
\draw (0,-4) -- (0,4);
\draw (2,-4) -- (2,4);
\draw (4,-4) -- (4,4);
\node (x) at (0,0) {};
\node (y1) at (2,2) {};
\node (y2) at (2,-2) {};
\node (z1) at (4,3) {};
\node (z2) at (4,1) {};
\node (z3) at (4,-1) {};
\node (z4) at (4,-3) {};
\draw (x) -- (y1);
\draw (x) -- (y2);
\draw (y1) -- (z1);
\draw (y1) -- (z2);
\draw (y2) -- (z3);
\draw (y2) -- (z4);
\end{tikzpicture}} \\ $+$ \\
\resizebox{1.5cm}{2cm}{
\begin{tikzpicture}[every node/.style={draw,circle,fill=black,inner sep=0pt,minimum size=3pt}]
\draw (0,-4) -- (0,4);
\draw (2,-4) -- (2,4);
\draw (4,-4) -- (4,4);
\node (x) at (0,0) {};
\node (y1) at (2,2) {};
\node (y2) at (2,-2) {};
\node (z1) at (4,3) {};
\node (z2) at (4,1) {};
\node (z3) at (4,-1) {};
\node (z4) at (4,-3) {};
\draw (x) -- (y1);
\draw (x) -- (y2);
\draw (y1) -- (z1);
\draw (y1) -- (z4);
\draw (y2) -- (z1);
\draw (y2) -- (z4);
\end{tikzpicture}}
\end{tabular} \\
$\beta_x$ & & $\tilde{\pi}_x$ & & $\tilde{\pi}_{x,1}+\tilde{\pi}_{x,2}$
\end{tabular}
\caption{Modification Steps of $\beta$}
\label{modgraph}
\end{figure}
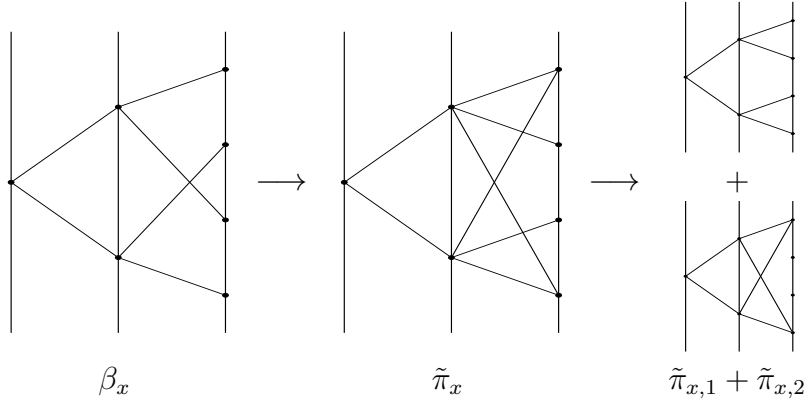
\end{proof}

Before we start the proof of Lemma \ref{domlem} we want to characterise some non-optimal 
parts of maximising transport plans for $|x+y|$ similar to Corollary \ref{maxcor}. We will do this in
another corollary to Lemma \ref{maxlem}. The proof again consists of comparing the values of the
function $f$ (defined in Lemma \ref{maxlem}) at two points.

\begin{corollary}
\label{illcor}
Let $\pi$ be some maximising martingale transport for marginals $\mu \preceq \nu$. Let $\Gamma$ be its
monotonicity set as in Lemma \ref{varlem} and $(x,y^-),(x,y^+),
(x',y') \in \Gamma$ with $y^- < y' < y^+$. The following constellations are not possible ((I1) and (I2) are
just restatements of Corollary \ref{maxcor} without the restriction $0 \leq x,x'$):

\begin{enumerate}[($\text{I}$1)]
\item $x' < x$, $y^- < -x' \leq y'$;
\item $x < x'$, $y' \leq -x' < y^+$;
\item $x' < x$, $-x \leq y^- < -x' < y^+$;
\item $x < x'$, $y^- < -x' < y^+ \leq -x$.
\end{enumerate}
\end{corollary}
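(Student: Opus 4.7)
The plan is to derive each of (I1)--(I4) as a violation of Lemma \ref{maxlem}(i), by exhibiting $x,x'$ with $(x,y^-),(x,y^+),(x',y')\in\Gamma$ and $f(x)<f(x')$. As preparation I would record the shape of $f(t) = (1-\lambda)|t+y^-|+\lambda|t+y^+|-|t+y'|$, where $\lambda\in(0,1)$ is the unique convex weight with $y' = (1-\lambda)y^- + \lambda y^+$. A quick computation shows that $f\equiv 0$ outside $[-y^+,-y^-]$, has slope $+2\lambda$ on $[-y^+,-y']$ and slope $-2(1-\lambda)$ on $[-y',-y^-]$, and is therefore strictly positive on the whole open interval $(-y^+,-y^-)$, with maximum $2\lambda(1-\lambda)(y^+-y^-)$ at $t=-y'$. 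The entire argument then reduces to locating $x$ and $x'$ with respect to the three kinks $-y^+,-y',-y^-$.

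For (I1) and (I2) I would only observe that these are parts (ii) and (i) of Corollary \ref{maxcor}, and that inspecting the paradigmatic $f$-comparison used there, the assumption $\supp(\mu)\subseteq\mathbb{R}^+$ is never invoked: the argument rests solely on the relative ordering of $x,x'$ and the three kinks. The proof therefore carries over verbatim without any positivity hypothesis on $\mu$.

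For (I3), the condition $-x\le y^-$ forces $x\ge -y^-$, so $x$ lies outside $[-y^+,-y^-]$ and $f(x)=0$; the strict inequalities $y^-<-x'<y^+$ place $x'$ in the open interval $(-y^+,-y^-)$, giving $f(x')>0$. Case (I4) is the mirror image: $y^+\le -x$ forces $x\le -y^+$ and again $f(x)=0$, while $x'\in(-y^+,-y^-)$ yields $f(x')>0$. In both cases $f(x)<f(x')$, contradicting Lemma \ref{maxlem}(i).

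I do not anticipate any genuine obstacle. The only care needed is bookkeeping with strict versus non-strict inequalities, in particular making sure that $x'$ is really forced into the open interval $(-y^+,-y^-)$ where $f$ is strictly positive, rather than onto the endpoints where $f$ vanishes. This is precisely why the statements of (I3) and (I4) insist on the two-sided strict inequality $y^-<-x'<y^+$.
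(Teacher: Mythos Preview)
Your proposal is correct and follows the same route as the paper: the paper states that ``the proof again consists of comparing the values of the function $f$ (defined in Lemma \ref{maxlem}) at two points,'' which is exactly what you do. Your explicit description of the tent shape of $f$ (zero outside $[-y^+,-y^-]$, slope $+2\lambda$ then $-2(1-\lambda)$) makes the comparison cleaner than the paper's picture-based reasoning, and your handling of the strict/non-strict inequalities is accurate.

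One small technical point worth flagging: Lemma \ref{maxlem} is stated for a maximiser that additionally solves the secondary optimisation problem, with $\Gamma$ coming from Lemma \ref{modlem}, whereas Corollary \ref{illcor} assumes only a maximiser with $\Gamma$ from Lemma \ref{varlem}. This is harmless because part (i) of Lemma \ref{maxlem} uses only the primary optimisation (its proof builds competitors $\alpha,\alpha'$ with $\int c\,d\alpha < \int c\,d\alpha'$, which already contradicts Lemma \ref{varlem}), but you might want to say so explicitly when you invoke it.
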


The proof of Lemma \ref{domlem} is rather circuitous. We try to improve readability by splitting it
in a computational part and an argumentative part.
We will formulate two auxiliary Lemmas. One which deals with
the computational part of the proof of Lemma \ref{domlem} by enumerating cases in which
the right part dominates the left part. The other one gets rid of a few cases by stating that
the properties formulated in Lemma \ref{domlem} are invariant under symmetries.

\begin{lemma}
\label{symlem}
Let $\pi$ be some BTP. Define the mirrored BTP $\pi'$ by setting
$\pi'(A) := \pi(-A)$ for any measurable $A \subseteq \mathbb{R}^3$ where
we set $-A := \{(-x,-y,-z) : (x,y,z) \in A\}$. Then it holds that
\begin{itemize}
\item $\pi$ fulfills $\int |x+y|\, d\pi_l(x,y) \leq \int |x+y|\,d\pi_r(x,y)$ if and only if
$\pi'$ fulfills $\int |x+y|\, d\pi'_l(x,y) \leq \int |x+y|\,d\pi'_r(x,y)$.
\item $\pi_r$ is an optimal martingale transport if and only if $\pi'_r$ is an optimal martingale
transport.
\end{itemize}
\end{lemma}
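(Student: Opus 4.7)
The plan is to exploit the evident symmetry of the cost function: $c(x,y) := |x+y|$ is invariant under the coordinatewise negation map $n(x,y) := (-x,-y)$, and analogously for its lift $n_3(x,y,z) := (-x,-y,-z)$ to $\mathbb{R}^3$. The claim will follow once we verify that the BTP structure, together with its left and right parts, is covariant with respect to this negation.

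First I would check that if $\pi$ is the BTP with nodes $(x,x^-,x^+,y^{--},y^{-+},y^{+-},y^{++})$, then $\pi' = n_3\#\pi$ is again a BTP, namely with nodes
\[(\tilde x,\tilde x^-,\tilde x^+,\tilde y^{--},\tilde y^{-+},\tilde y^{+-},\tilde y^{++}) = (-x,-x^+,-x^-,-y^{++},-y^{+-},-y^{-+},-y^{--}).\]
The ordering constraints in the definition of a BTP are preserved because negation reverses the order on $\mathbb{R}$, and the weights transform as $\tilde\lambda^{\pm} = \lambda^{\mp}$ and $\tilde\lambda^{\pm\pm} = \lambda^{\mp\mp}$ by a direct computation from the defining fractions. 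Reading off the left and right parts of $\pi'$ from its node data gives immediately $\pi'_r = n\#\pi_r$ and $\pi'_l = n\#\pi_l$.

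For the first bullet, the change-of-variables formula together with the symmetry $|{-u}+({-v})| = |u+v|$ yields
\[\int |u+v|\,d\pi'_r(u,v) = \int |u+v|\,d(n\#\pi_r)(u,v) = \int |({-u})+({-v})|\,d\pi_r(u,v) = \int |u+v|\,d\pi_r(u,v),\]
and likewise for $\pi_l$ and $\pi'_l$. Hence the inequality $\int|x+y|\,d\pi_l \le \int|x+y|\,d\pi_r$ is equivalent to its primed counterpart.

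For the second bullet, note that the pushforward $\tilde\pi_r \mapsto n\#\tilde\pi_r$ is a bijection between $\mathcal{M}(\mu_r,\nu_r)$ and $\mathcal{M}(n\#\mu_r,n\#\nu_r)$: the martingale condition $\int y\,\alpha_x(dy) = x$ becomes $\int ({-y})\,(n\#\alpha)_{-x}(dy) = {-x}$ under negation, which is the martingale condition for the negated coupling. Combined with the invariance of the cost under $n$, this bijection sends optimisers to optimisers, so $\pi_r$ is a maximiser in $\mathcal{M}(\mu_r,\nu_r)$ if and only if $\pi'_r = n\#\pi_r$ is a maximiser in $\mathcal{M}(n\#\mu_r,n\#\nu_r)$, which is exactly the statement. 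The only step requiring any care is the bookkeeping of how the indices $\pm$ and $\pm\pm$ swap under negation; once that is written out, everything else is a routine consequence of the symmetry of $|x+y|$.
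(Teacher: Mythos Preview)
Your argument is correct. The paper states this lemma without proof, treating the symmetry as self-evident, so there is no alternative approach to compare against; your write-up simply makes explicit the bookkeeping (the swap $\tilde x^{\pm}=-x^{\mp}$, $\tilde y^{\pm\pm}=-y^{\mp\mp}$, $\tilde\lambda^{\pm}=\lambda^{\mp}$, $\tilde\lambda^{\pm\pm}=\lambda^{\mp\mp}$, hence $\pi'_r=n\#\pi_r$ and $\pi'_l=n\#\pi_l$) and the invariance of $|x+y|$ and of the martingale constraint under $n$, which is exactly what a reader is expected to supply.
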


\begin{lemma}
\label{comlem}
Let $\pi$ be some BTP. It fulfills $\int |x+y|\, d\pi_l(x,y) \leq \int |x+y|\,d\pi_r(x,y)$
if any of the following holds:
\begin{enumerate}[(L1)]
\item $y^{-+} \leq y^{+-}$;
\item $x^-+y^{+-} \geq 0$, $x^-+y^{--} \geq 0$;
\item $x^+ + y^{++} \leq 0$, $x^+ + y^{-+} \leq 0$;
\item $x^+ + y^{-+} \leq 0$, $x^+ + y^{+-} \leq 0$, $x^+ + y^{++} \geq 0$;
\item $x^- + y^{+-} \geq 0$, $x^- + y^{-+} \geq 0$, $x^- + y^{--} \leq 0$;
\item $x^- + y^{-+} \geq 0$, $x^-+y^{--} \leq 0$, $x^+ + y^{+-} \geq 0$, 
$y^{--} \leq y^{+-} \leq y^{++} \leq y^{-+}$, 
$(1-\lambda_-)(x^+-x^-) \leq x^+ + y^{+-}$,
$(1-\lambda_+)(x^+-x^-) \leq x^+ + y^{++}$ for
$\lambda_{\pm}$ such that $\lambda_{\pm}y^{--}+(1-\lambda_{\pm})y^{-+} = y^{+\pm}$;
\item $x^- + y^{-+} \geq 0 \geq x^- + y^{--}$, $x^- + y^{++} \geq 0$, $x^+ + y^{+-} \leq 0$, $y^{+-} \leq y^{--} \leq y^{++} \leq y^{-+}$,
$\lambda(x^+-x^-) \leq -(x^-+y^{--})$ with $\lambda$ such that
$\lambda y^{+-} + (1-\lambda)y^{++} = y^{--}$;
\item $y^- := y^{--} = y^{+-}$, $x^+ + y^- \leq 0$, $x^- + y^{-+} \geq 0$, $x^+ + y^{++} \geq 0$, $y^{++} \leq y^{-+}$,
$(1-\lambda)(x^+-x^-) \leq x^+ + y^{++}$ for
$\lambda$ such that $\lambda y^{-}+(1-\lambda)y^{-+} = y^{++}$;
\item $y^{-+} = y^{++}$, $y^{--} = y^{+-}$.
\end{enumerate}
\end{lemma}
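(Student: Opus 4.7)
The plan is to compare
$R := \int|s+t|\,d\pi_r$ and $L := \int|s+t|\,d\pi_l$
directly. Writing $|a| = 2a_+ - a$ and invoking the martingale identities
$\lambda^- x^- + \lambda^+ x^+ = x$,
$\lambda^{--} y^{--} + \lambda^{-+} y^{-+} = x^-$, and
$\lambda^{+-} y^{+-} + \lambda^{++} y^{++} = x^+$,
one checks $\int(s+t)\,d\pi_r = \int(s+t)\,d\pi_l = 2x$, so
$R - L = 2\bigl(\int(s+t)_+\,d\pi_r - \int(s+t)_+\,d\pi_l\bigr)$;
it therefore suffices to show
$\int(s+t)_+\,d\pi_r \geq \int(s+t)_+\,d\pi_l$
in each case. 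This reformulation is also consistent with the equivalence $|x+y|\leftrightarrow(x+y-K)_+$ exploited in Theorem~\ref{domthm}.

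For case (L2), the hypothesis $x^- + y^{--} \geq 0$ together with the structural inequalities $y^{--} \leq x^- \leq y^{-+}$, $y^{+-} \leq y^{++}$ and $x^- \leq x^+$ forces every one of the eight sums $x^\pm + y^{\cdot\cdot}$ and $x + y^{\cdot\cdot}$ to be nonnegative, so $(s+t)_+ = s+t$ on the supports of both $\pi_r$ and $\pi_l$ and $R = L = 2x$. Case (L3) is the dual picture under the sign flip of Lemma~\ref{symlem}, giving $R = L = -2x$. In case (L9), where $y^{++} = y^{-+} =: y^+$ and $y^{--} = y^{+-} =: y^-$, I introduce the auxiliary function
$$\psi(s) := \frac{(y^+ - s)|s + y^-| + (s - y^-)|s + y^+|}{y^+ - y^-}, \qquad s \in [y^-, y^+],$$
verify by a piecewise analysis (splitting at the possible kinks $s = -y^-, -y^+$, which are the only places where an absolute value can change sign inside $[y^-,y^+]$) that $\psi$ is convex on $[y^-, y^+]$, observe that $L = \psi(x)$ and $R = \lambda^- \psi(x^-) + \lambda^+ \psi(x^+)$, and conclude $R \geq L$ by Jensen's inequality applied to $x = \lambda^- x^- + \lambda^+ x^+$. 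Case (L8) is handled analogously: with only $y^{--} = y^{+-}$, I build a piecewise function of the same flavor, whose convexity on the relevant interval is enforced by precisely the supplementary $\lambda$-hypothesis stated in (L8).

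The remaining cases (L1), (L4), (L5), (L6), (L7) each specify a partial sign pattern for the eight sums $x^\pm + y^{\cdot\cdot}$. Using the monotonicity
$\operatorname{sgn}(x^- + y^{ij}) \leq \operatorname{sgn}(x + y^{ij}) \leq \operatorname{sgn}(x^+ + y^{ij})$,
each hypothesis pins down enough of this pattern to expand $R$ and $L$ term by term with absolute values removed; the linear parts then cancel via the martingale identities and $R - L$ collapses into a single manifestly nonnegative multiple of $(x^+ - x^-)$. In (L6) and (L7) the extra inequalities of the form $(1 - \lambda^\pm)(x^+ - x^-) \leq x^\pm + y^{\cdot\cdot}$ are tailored precisely to guarantee that $\operatorname{sgn}(x + y^{\cdot\cdot})$ agrees with $\operatorname{sgn}(x^\pm + y^{\cdot\cdot})$ on exactly the entries where this agreement is needed for the cancellation to go through. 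The \emph{main obstacle} is the combinatorial bookkeeping of sign patterns: Lemma~\ref{symlem} halves the work by pairing (L2)/(L3) and (L4)/(L5) and by fixing (L1), (L8), (L9), and within each remaining case the hypothesis is engineered so that once the absolute values have been resolved, the residual inequality reduces to a one-line consequence of $x^- \leq x \leq x^+$ together with the three martingale identities above.
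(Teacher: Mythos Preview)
Your reduction via $|a|=2a_+-a$ and the martingale identities is sound, and your treatment of (L2), (L3) is correct. The convexity argument you give for (L9) is genuinely nicer than the paper's case reduction and works exactly as you describe. However, the remainder of the sketch contains real gaps.

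For (L1), (L4)--(L8) you claim that the hypotheses ``pin down enough of the sign pattern to expand $R$ and $L$ term by term with absolute values removed''. This is false. In (L1) the hypothesis $y^{-+}\le y^{+-}$ fixes \emph{no} sign at all; the paper instead uses the one-sided bound $|a|-|b|\ge \sgn(b)(a-b)$ together with $y^{-+}\le y^{+-}$ to get $\sgn(x+y^{+-})\ge\sgn(x+y^{-+})$. In (L4) the sign of $x+y^{++}$ is genuinely undetermined (you only know $x^++y^{++}\ge 0$ and $x\le x^+$), and the paper must split into the two sub-cases $x+y^{++}\gtrless 0$; likewise (L6) needs four sub-cases on the signs of $x+y^{--},x+y^{+-},x+y^{++}$, (L7) needs two, and (L8) needs two.

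More seriously, your reading of the extra $\lambda$-inequalities in (L6)--(L8) is wrong. The parameters $\lambda_{\pm}$ in (L6) are defined via $\lambda_\pm y^{--}+(1-\lambda_\pm)y^{-+}=y^{+\pm}$ and have nothing to do with $\lambda^-=(x^+-x)/(x^+-x^-)$, so the condition $(1-\lambda_-)(x^+-x^-)\le x^++y^{+-}$ does not control $\sgn(x+y^{+-})$. In the paper these inequalities enter only \emph{after} the sign-split, as genuine algebraic bounds: for instance in the first sub-case of (L6) one arrives at $R-L=2\lambda^+\bigl(2x^+-\lambda^-\lambda^{-+}(x^+-x^-)\bigr)$, which is not ``manifestly nonnegative''; one then uses the $\lambda_\pm$-conditions to bound $2x^+$ from below and still needs a further nontrivial estimate $\lambda^{+-}(1-\lambda_-)+\lambda^{++}(1-\lambda_+)\ge \lambda^-\lambda^{-+}$. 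The analogous steps in (L7) and (L8) are similar. Finally, your proposed convexity argument for (L8) is too vague to assess: since the upper targets $y^{-+}$ and $y^{++}$ differ, there is no obvious single function $\psi$ with $R=\lambda^-\psi(x^-)+\lambda^+\psi(x^+)$ and $L=\psi(x)$, and you have not said what replaces it.
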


\begin{proof}
Before we start calculating the desired result for these cases, we want to note
a few identities which will be used again and again in the subsequent computations
\begin{align*}
(x^+ + y^{+\pm}) - (x+y^{+\pm}) &= \lambda^-(x^+-x^-), \\
(x^- + y^{-\pm}) - (x+y^{-\pm}) &= -\lambda^+(x^+-x^-), \\
(x^+ + y^{+\pm}) + (x+y^{+\pm}) &= 2(x^+ + y^{+\pm}) - \lambda^-(x^+-x^-),\\
(x^- + y^{-\pm}) + (x+y^{-\pm}) &= 2(x^- + y^{-\pm}) + \lambda^+(x^+-x^-).
\end{align*}
We will not do the whole string of arithmetic manipulations every time as these are
essentially straightforward. 
\begin{enumerate}[(L1)]
\item Calculate:
\begin{align*}
& \int |x+y|\,d\pi_r(x,y) - \int |x+y|\, d\pi_l(x,y) \\ 
& \quad=\lambda^{-}\lambda^{--}(|x^- + y^{--}| - |x+y^{--}|) \\
& \qquad+\lambda^{-}\lambda^{-+}(|x^- + y^{-+}| - |x+y^{-+}|)\\
& \qquad + \lambda^{+}\lambda^{+-}(|x^+ + y^{+-}| - |x+y^{+-}|) \\
& \qquad+\lambda^{+}\lambda^{++}(|x^+ + y^{++}| - |x+y^{++}|) \\
& \quad \geq \lambda^+\lambda^-(x^+-x^-)(-\sgn(x+y^{--})\lambda^{--}-\sgn(x+y^{-+})\lambda^{-+} \\
& \qquad +\sgn(x+y^{+-})\lambda^{+-}+\sgn(x+y^{++})\lambda^{++}) \\
& \quad \geq \lambda^+\lambda^-(x^+-x^-)(\sgn(x+y^{+-})-\sgn(x+y^{-+})) \geq 0 \text{.}
\end{align*}
\item Note that the given conditions imply that all the terms occuring as values for $|x+y|$ are 
non-negative yielding with the above given identities:
\begin{align*}
& \int |x+y|\,d\pi_r(x,y) - \int |x+y|\, d\pi_l(x,y) \\ 
& \qquad= \lambda^+\lambda^-(x^+ - x^-)(\lambda^{++} + \lambda^{+-} - \lambda^{-+}-\lambda^{--}) = 0.
\end{align*}
\item This follows from (L2) by Lemma \ref{symlem}.
\item From the conditions we can deduce $x^-+y^{--} \leq x+y^{--},x^-+y^{-+} \leq x+y^{-+} \leq x^++y^{-+} \leq 0$
and $x+y^{+-} \leq x^+ + y^{+-} \leq 0$. For $x+y^{++} \geq 0$ we obtain
\begin{align*}
& \int |x+y|\,d\pi_r(x,y) - \int |x+y|\, d\pi_l(x,y) \\ 
& \quad= \lambda^+\lambda^-(x^+ - x^-)(-\lambda^{++} - \lambda^{+-} + \lambda^{-+}+\lambda^{--}) \\
&\qquad+2\lambda^+\lambda^{++}(x^+ + y^{++})\geq 0 \text{.}
\end{align*}
For $x+y^{++} \leq 0$ we get instead
\begin{align*}
& \int |x+y|\,d\pi_r(x,y) - \int |x+y|\, d\pi_l(x,y) \\ 
& \qquad= \lambda^+\lambda^-(x^+ - x^-)(\lambda^{++} - \lambda^{+-} + \lambda^{-+} + \lambda^{--}) \geq 0 \text{.}
\end{align*}
\item This follows from (L4) by Lemma \ref{symlem}.
\item Of course we have here $x+y^{-+} \geq x^-+y^{-+} \geq 0$ and $x^+ + y^{++} \geq x^++y^{+-} \geq 0$.
We distinguish the signs for the other three binary expressions:
\begin{itemize}
\item $x+y^{--} \leq x+y^{+-} \leq x+y^{++} \leq 0$:
\begin{align*}
& \int |x+y|\,d\pi_r(x,y) - \int |x+y|\, d\pi_l(x,y) \\ 
& \quad= \lambda^+\lambda^-(x^+ - x^-)(-\lambda^{++} - \lambda^{+-} - \lambda^{-+}+\lambda^{--}) \\
&\qquad+ 4\lambda^+x^+ \\
& \quad = 2\lambda^+(2x^+ - \lambda^-\lambda^{-+}(x^+ - x^-))\text{.}
\end{align*}
Using the inequalities with $\lambda_-$ and $\lambda_+$ we obtain
$(\lambda^{+-}(1-\lambda_-) + \lambda^{++}(1-\lambda_+))(x^+-x^-) \leq 2x^+$ which we can use in the
above expression to obtain
\begin{align*}
&2x^+ - \lambda^-\lambda^{-+}(x^+ - x^-) \\
&\qquad \geq (\lambda^{+-}(1-\lambda_-) + \lambda^{++}(1-\lambda_+) - \lambda^-\lambda^{-+})(x^+-x^-)\text{.}
\end{align*}
We will now show that $\lambda^{+-}(1-\lambda_-) + \lambda^{++}(1-\lambda_+) - \lambda^-\lambda^{-+} \geq 0$
to obtain our desired result. A sufficient condition for this is that the following expression is non-negative:
\begin{align*}
&(x^+-x^-)((y^{++}-x^+)(y^{+-}-y^{--})\\
&\qquad+(x^+-y^{+-})(y^{++}-y^{--}))\\
&\qquad-(x^+-x)(y^{++}-y^{+-})(x^--y^{--}) \\
&\quad\geq(x^+-x)((y^{++}-x^+)(y^{+-}-y^{--})\\
&\qquad+(x^+-y^{+-})(y^{++}-y^{--})\\
&\qquad-(y^{++}-y^{+-})(x^--y^{--})) \\
&\quad=(x^+-x)(x^+-x^-)(y^{++}-y^{+-}) \geq 0\text{.}
\end{align*}
\item $x+y^{--} \leq x+y^{+-} \leq 0 \leq x+y^{++}$:
\begin{align*}
& \int |x+y|\,d\pi_r(x,y) - \int |x+y|\, d\pi_l(x,y) \\ 
& \quad= \lambda^+\lambda^-(x^+ - x^-)(\lambda^{++} - \lambda^{+-} - \lambda^{-+}+\lambda^{--}) \\
&\qquad+ 2\lambda^+\lambda^{+-}(x^++y^{+-}) \\
& \quad = 2\lambda^+(\lambda^-(x^+ - x^-)(\lambda^{++} - \lambda^{-+})+\lambda^{+-}(x^++y^{+-}))\text{.}
\end{align*}
Using the condition for $\lambda_+$, we get
\begin{align*}
&\lambda^-(x^+ - x^-)(\lambda^{++} - \lambda^{-+})+\lambda^{+-}(x^++y^{+-}) \\
&\qquad \geq (\lambda^-(\lambda^{++}-\lambda^{-+})+\lambda^{+-}(1-\lambda_+))(x^+-x^-)\text{.}
\end{align*}
It remains therefore to show that $\lambda^-(\lambda^{++}-\lambda^{-+})+\lambda^{+-}(1-\lambda_+) \geq 0$.
It is again sufficient that the following expression is non-negative:
\begin{align*}
&(x^+-x)((x^+-y^{+-})(y^{-+}-y^{--})\\
&\qquad-(x^--y^{--})(y^{++}-y^{+-}))\\
&\qquad+(x^+-x^-)(y^{++}-x^+)(y^{+-}-y^{--}) \\
&\quad\geq(x^+-x)((x^+-y^{+-})(y^{-+}-y^{--})\\
&\qquad-(x^--y^{--})(y^{++}-y^{+-})\\
&\qquad+(y^{++}-x^+)(y^{+-}-y^{--})) \\
&\quad=(x^+-x)(x^+(y^{-+}-y^{+-})-x^-(y^{++}-y^{+-})\\
&\qquad+y^{+-}(y^{++}-y^{-+})) \\
&\quad\geq(x^+-x)(x^+-x^-)(y^{++}-y^{+-}) \geq 0\text{.}
\end{align*}
In the last step we used that $y^{-+} \geq y^{++}$ and that we have $y^{+-} \leq 0$ as well as 
$y^{++}-y^{-+} \leq 0$.
\item $x+y^{--} \leq 0 < x+y^{+-} \leq x+y^{++}$:
\begin{align*}
& \int |x+y|\,d\pi_r(x,y) - \int |x+y|\, d\pi_l(x,y) \\ 
& \quad= \lambda^+\lambda^-(x^+ - x^-)(\lambda^{++} + \lambda^{+-} - \lambda^{-+}+\lambda^{--}) \geq 0\text{.}
\end{align*}
\item $0 < x+y^{--} < x+y^{+-} < x+y^{++}$:
\begin{align*}
& \int |x+y|\,d\pi_r(x,y) - \int |x+y|\, d\pi_l(x,y) \\ 
& \quad= \lambda^+\lambda^-(x^+ - x^-)(\lambda^{++} + \lambda^{+-} - \lambda^{-+}-\lambda^{--}) \\
& \qquad -2\lambda^-\lambda^{--}(x^-+y^{--})\geq 0\text{.}
\end{align*}
\end{itemize}
\item \begin{itemize}
\item $x+y^{--} \leq 0$:
\begin{align*}
& \int |x+y|\,d\pi_r(x,y) - \int |x+y|\, d\pi_l(x,y) \\ 
& \quad= \lambda^+\lambda^-(x^+ - x^-)(\lambda^{++} - \lambda^{+-} - \lambda^{-+}+\lambda^{--}) \\
& \quad= 2\lambda^+\lambda^-(x^+ - x^-)(\lambda^{++} - \lambda^{-+})\text{.}
\end{align*}
We need to establish $\lambda^{++} \geq \lambda^{-+}$ or the sufficient condition
$(x^+-y^{+-})(y^{-+}-y^{--}) \geq (x^--y^{--})(y^{++}-y^{+-})$ which can be shown by noting
\begin{align*}
& (x^+-y^{+-})(y^{-+}-y^{--}) - (x^--y^{--})(y^{++}-y^{+-})  \\
& \quad = (x^+-y^{+-})y^{-+} + (y^{++} - x^+)y^{--}+x^-(y^{+-}-y^{++}) \\
& \quad \geq (x^+ - y^{+-})y^{++} + (y^{++}-x^+)y^{+-}+x^-(y^{+-}-y^{++}) \\
& \quad= (x^+-x^-)(y^{++}-y^{+-}) \text{.}
\end{align*}
\item 0 < $x+y^{--}$:
\begin{align*}
& \int |x+y|\,d\pi_r(x,y) - \int |x+y|\, d\pi_l(x,y) \\
& \quad= \lambda^+\lambda^-(x^+ - x^-)(\lambda^{++} - \lambda^{+-} - \lambda^{-+}-\lambda^{--}) \\
& \qquad -2\lambda^-\lambda^{--}(x^-+y^{--}) \\
& \quad= 2\lambda^-(-\lambda^{+-}\lambda^+(x^+-x^-)-\lambda^{--}(x^-+y^{--})) \text{.}
\end{align*}
Using the condition with $\lambda$ we obtain
\begin{align*}
&-\lambda^+\lambda^{+-}(x^+-x^-)-\lambda^{--}(x^-+y^{--}) \\
& \quad\geq (\lambda^{--}\lambda-\lambda+\lambda^{+-})(x^+-x^-) \\
& \quad \geq (\lambda^{--}\lambda-\lambda^{+-})(x^+-x^-) \text{.}
\end{align*}
Thus, it remains to show that $\lambda^{--}\lambda-\lambda^{+-} \geq 0$. By substituting the values of the variables
in this expression, this is equivalent to showing that the following expression is non-negative:
\begin{align*}
& (y^{-+}-x^-)(y^{++}-y^{--})-(y^{++}-x^+)(y^{-+}-y^{--}) \\
& \quad = (x^+-y^{--})y^{-+} + (y^{--}-x^-) -(x^+-x^-)y^{--} \\
& \quad \geq (x^+-x^-)(y^{++}-y^{--}) \geq 0 \text{.}
\end{align*}
Here we used in the last step that in this case $y^{-+} \geq y^{++}$ holds.
\end{itemize}
\item \begin{itemize}
\item $x+y^{++} \geq 0$:
\begin{align*}
& \int |x+y|\,d\pi_r(x,y) - \int |x+y|\, d\pi_l(x,y) \\ 
& \quad= 2\lambda^+\lambda^-(x^+ - x^-)(\lambda^{++} - \lambda^{-+}) \text{.}
\end{align*}
as above this follows from $(x^+-y^-)(y^{-+}-y^-) \geq (x^--y^-)(y^{++}-y^-)$ which is
obvious in this case.
\item $x+y^{++} \leq 0$:
\begin{align*}
& \int |x+y|\,d\pi_r(x,y) - \int |x+y|\, d\pi_l(x,y) \\
& \quad= \lambda^+\lambda^-(x^+ - x^-)(-\lambda^{++} - \lambda^{+-} - \lambda^{-+}-\lambda^{--}) \\
& \qquad +2\lambda^+\lambda^{++}(x^++y^{++}) \\
& \quad\geq 2\lambda^+(x^+-x^-)(\lambda^{++}(1-\lambda)-\lambda^-\lambda^{-+})\text{.}
\end{align*}
Easy substitutions give
\begin{align*}
\lambda^{++}(1-\lambda)-\lambda^-\lambda^{-+} &\geq \lambda^{++}(1-\lambda)-\lambda^{-+} \\
&= \frac{x^+ - x^-}{y^{-+}-y^-} \geq 0 \text{.}
\end{align*}
\end{itemize}
\item This follows from the cases we already proved. By Lemma \ref{symlem} we restrict ourselves to
$0 \leq x^- \leq x^+$ and $x^- \leq 0 \leq x^+$ with $|x^-| \leq x^+$.

In the first of these cases we can either have $-x^- \leq y^-$ which gives us a special case of
(L2). Or we can have $-x^+ \leq y^- \leq -x^-$ which is a special case of (L6). Finally we can have
$y^- \leq -x^+$ which is a special case of (L8).

In the second case we have to distinguish between $-x^+ \leq y^-$ which gives a special case
of (L6) and $y^- \leq -x^+$ which is again a special case of (L8).
\end{enumerate}
\end{proof}

\begin{proof}[Proof of Lemma \ref{domlem}]
We can now do a straightforward analysis of every possible BTP. This leads to a somewhat long list of cases
that we will have to enumerate individually.

First note that for $x^-=x^+$ we have $\pi_l=\pi_r$ and therefore (ii) holds trivially. The same is the case
for $x=x^-$ and $x=x^+$. Therefore we can always assume $x^- < x < x^+$ from now on.

From Lemma \ref{comlem} (L1) we already know that whenever we have $y^{-+} \leq y^{+-}$, we also
have that (ii) holds, so we  have dealt with these cases.

Next we want to look at BTP's such that $y^{--} < y^{+-} < y^{-+} < y^{++}$. From Lemma \ref{symlem}
we see that it is sufficient to deal with those cases where $0 \leq x^- < x^+$ or $x^- \leq 0 \leq x^+$
with $|x^-| \geq x^+$.

In the first case, we see that for $y^{+-} < -x^-$ the BTP fulfills (i) by Corollary \ref{illcor} (I1)
(with elements $(x^+,y^{++}),(x^+,y^{+-}),(x^-,y^{-+})$). Otherwise we have
$x^-+y^{+-} \geq 0$. As we trivially have $x^-+y^{-+} \geq 0$ (from $0 \leq x^- \leq x^{-+}$)
the remaining BTP's will fulfill (ii) by either (L2) or (L5) of Lemma \ref{comlem} depending
on the sign of $x^-+y^{--}$.

In the second case a BTP fulfills (i) if $-x^+ < y^{-+} \leq -x^-$ holds which can be seen from
Corollary \ref{illcor} (I4) with elements $(x^-,y^{--}),(x^-,y^{-+})$, $(x^+,y^{+-})$. This is also
the case for $-x^- \leq y^{-+}$ by Corollary \ref{illcor} (I1) with elements $(x^+,y^{++}),(x^+,y^{+-}),(x^-,y^{-+})$.
Otherwise we have $x^++y^{+-} < x^++y^{-+} \leq 0$ and trivially $x^++y^{++} \geq 0$. Therefore
(ii) holds by \ref{comlem} (L4).

The next type we want to distinguish, consists of the BTP's with $y^{--} \leq y^{+-} \leq y^{++} \leq y^{-+}$. 
If we have $y^{+-} = y^{++} = y^{-+}$ we are back in the first type of plans we discussed. Therefore
we can assume that we only have $y^{--} < y^{+-} = y^{++} < y^{-+}$ or equality for the two lower or the
two larger elements if $y^{+-} < y^{++}$. We note that equality for both of them is covered in Lemma
\ref{comlem} (L9), so we assume that at most one of these pairs is equal.

First we take $x^- < x^+ \leq 0$. For $y^{-+} \leq -x^+$ we have $x^+ + y^{++} \leq x^+ + y^{-+} \leq 0$
so the BTP fulfills (ii) by Lemma \ref{comlem} (L3). If we
instead have $y^{-+} > -x^+$ and $y^{+-} > y^{--}$ the BTP fulfills (i) by Corollary \ref{illcor} (I2)
with elements $(x^-,y^{--}),(x^-,y^{-+}),(x^+,y^{+-})$.
For $y^{+-} = y^{--}$ we also have (i) for
$y^{++} \leq -x^+$ by Corollary \ref{illcor} (I2) with elements $(x^-,y^{--}),(x^-,y^{-+}),(x^+,y^{++})$.
For $y^{++} < y^{-+} \leq -x^-$ we have (i) by Corollary \ref{illcor} (I4) with those same elements.
Lemma \ref{comlem} (L8) gives (i) or (ii) for
$y^{-+} \geq -x^-$. We clearly have to have $x^+ + y^{+-} \leq 0$, $x^+ + y^{++} \geq 0$ and
$x^-+y^{-+} \geq 0$. If (i) does not hold, we have from Lemma \ref{maxlem} that
\begin{align*}
0 &\leq \lambda(|x^-+y^{--}|-|x^++y^{--}|) \\
& \qquad + (1-\lambda)(|x^-+y^{-+}|-|x^++y^{-+}|) \\
&\qquad +(|x^++y^{++}|-|x^-+y^{++}) \\
&\quad = 2(x^++y^{++})-2(1-\lambda)(x^+ - x^-) 
\end{align*}
holds. Therefore Lemma \ref{comlem} (L8) gives us (ii).

Now we take $0 \leq x^- < x^+$. For $-x^- \leq y^{--}$ we have $x^-+y^{+-} \geq x^-+y^{--} \geq 0$
and (ii) holds by Lemma \ref{comlem} (L2).
For $y^{--} \leq -x^-$ and $y^{+-} \geq -x^+$ we have (ii) by Lemma \ref{comlem} (L6)
as long as the plan does not fulfill (i). This is because we clearly
have $x^-+y^{-+} \geq 0$, $x^-+y^{--} \leq 0$ and $x^++y^{+-} \geq 0$. Furthermore any plan which
does not fulfill (i) has to fulfill by Lemma \ref{maxlem} that
\begin{align*}
0 &\leq \lambda_-(|x^-+y^{--}|-|x^++y^{--}|) \\
& \qquad + (1-\lambda_-)(|x^-+y^{-+}|-|x^++y^{-+}|) \\
&\qquad +(|x^++y^{+-}|-|x^-+y^{+-}) \\
&\quad = 2(x^++y^{+-})-2(1-\lambda_-)(x^+ - x^-)
\end{align*}
and the second condition from (L6) which has been calculated in the last paragraph (here $\lambda_+=\lambda$).
If we instead have $y^{--} < y^{+-} \leq -x^+$ we have (i) (by
Corollary \ref{illcor} (I2) with $(x^-,y^{--})$,$(x^-,y^{-+})$,$(x^+,y^{+-})$.
For $y^{--} = y^{+-} \leq -x^+$ we can deduce ((i) or) (ii) from Lemma \ref{comlem} (L8).

The last subcase is $x^- \leq 0 \leq x^+$. 
As above we have (i) or (ii) for $y^{-+} \geq -x^-$
and $y^{+-} \geq -x^+$ by Lemma \ref{comlem} (L6) as $y^-+y^{--} \leq 0$ is fulfilled
trivially.
For $y^{--} < -x^+$ and $y^{-+} \leq -x^-$ the BTP
fulfills (i) as soon as not both $y^{--} = y^{+-}$ and $y^{-+} = y^{++}$ hold by Corollary $\ref{illcor}$ (I4)
with elements $(x^-,y^{--}),(x^-,y^{-+})$, $(x^+,y^{++})$ (or $(x^+,y^{+-})$).
 For $y^{--} < y^{+-} \leq -x^+$ the BTP fulfills (i) by Corollary \ref{illcor} (I2) with elements
$(x^-,y^{--}),(x^-,y^{-+}),(x^+,y^{+-})$.
If we instead have $y^{--} = y^{+-} \leq -x^+$ then (i) or (ii) follows from Lemma \ref{comlem} (L8)
as $x^+y^{+-} \leq 0$ holds by assumption, $x^++y^{++} \geq 0$ holds trivially in this case and
$y^{-+} \geq -x^-$ has to hold because we already showed (i) for the other possibility.

The cases with $y^{+-} \leq y^{--} \leq y^{-+} \leq y^{++}$ are again mirrored from this and can be safely omitted.

The last type we need to discuss are the BTP's with $y^{+-} < y^{--} < y^{++} < y^{-+}$. Again
we get from Lemma \ref{symlem} that it is sufficient to deal with those cases 
where $0 \leq x^- < x^+$ or $x^- \leq 0 \leq x^+$ with $|x^-| \leq x^+$.

In the first case we get (ii) from Lemma \ref{comlem} (L2) for $y^{+-} \geq -x^-$.
For $y^{+-} < -x^- \leq y^{--}$ and $-x^+ \leq y^{+-} < (y^{--} \leq) -x^-$ 
the BTP fulfills (i) by Corollary \ref{illcor} (I1) or (I3) respectively with 
$(x^+,y^{++}),(x^+,y^{+-})$, $(x^-,y^{--})$. Furthermore, it remains to check that (i) or (ii) holds for
$x^+ + y^{+-}\leq 0$, $x^-+y^{-+} > x^-+y^{++} \geq 0$. and $x^-+y^{--} \leq 0$.
If (i) does not hold, we have by Lemma \ref{maxlem} that 
\begin{align*}
0 &\leq \lambda(|x^++y^{+-}|-|x^-+y^{+-}|) \\
&\qquad + (1-\lambda)(|x^++y^{++}|-|x^-+y^{++}|) \\
&\qquad +(|x^-+y^{--}|-|x^++y^{--}|) \\
&=-2\lambda (x^+-x^-)-2(x^-+y^{--})
\end{align*}
holds. Therefore the conditions of Lemma \ref{comlem} (L7) are fulfilled, and (ii) follows.

In the second case we can deal with BTP's with $-x^+ \leq y^{+-}$ by using 
Corollary \ref{illcor} (I3) with $(x^+,y^{+-}),(x^+,y^{++}),(x^-,y^{--})$. Otherwise
the conditions of Lemma \ref{comlem} (L7) are fulfilled again, and we are done.
\end{proof}

\bibliography{paper}
\bibliographystyle{abbrv}

\end{document}